\documentclass[11pt]{article}

\usepackage{amsmath,amsthm}

\usepackage{amssymb,latexsym}

\usepackage{enumerate}

\topmargin -0.45cm \oddsidemargin 0.75cm \evensidemargin 1.25cm
\textwidth 14.50cm \textheight 22.50cm

\DeclareOption{smallcondensed}{}

\newtheorem{tw}{Theorem}[section]
\newtheorem{lm}[tw]{Lemma}
\newtheorem{wn}[tw]{Corollary}
\newtheorem{stw}[tw]{Proposition}
\newenvironment{dow}{\it Proof.\rm}{\hfill $\Box$}

\theoremstyle{definition}
\newtheorem{df}[tw]{Definition}
\newtheorem{uw}[tw]{Remark}

\newcommand{\BN}{{\mathbb N}}
\newcommand{\BR}{{\mathbb R}}
\newcommand{\BX}{{\mathbb X}}
\newcommand{\BM}{{\mathbb M}}

\newcommand{\FF}{{\mathcal{F}}}
\newcommand{\GG}{{\mathcal{G}}}

\newcommand{\BB}{{\mathcal{B}}}

\newcommand{\MM}{{\mathcal{M}}}

\newcommand{\EE}{{\mathcal{E}}}

\newcommand{\A}{{\mathcal{A}}}

\numberwithin{equation}{section}

\begin{document}
\title{Reduced measures for semilinear elliptic equations
involving Dirichlet operators}
\author{Tomasz Klimsiak}
\date{}
\maketitle

\begin{abstract}
We consider elliptic equations of the form (E) $-Au=f(x,u)+\mu$,
where $A$ is a negative definite self-adjoint Dirichlet operator,
$f$ is a function which is continuous and nonincreasing with
respect to $u$ and $\mu$ is a Borel measure of finite potential.
We introduce  a probabilistic definition of a solution of (E),
develop the theory of good and reduced measures introduced by H.
Brezis, M. Marcus and A.C. Ponce in the case where $A=\Delta$ and
show basic properties of solutions of (E). We also prove Kato's
type inequality. Finally, we characterize the set of good measures
in case $f(u)=-u^p$ for some $p>1$.
\end{abstract}
\noindent {\small\bf Mathematics Subject Classification (2010):}
35J75, 60J45.

\footnotetext{T. Klimsiak: Institute of Mathematics, Polish
Academy of Sciences, \'Sniadeckich 8, 00-956 Warszawa, Poland, and
Faculty of Mathematics and Computer Science, Nicolaus Copernicus
University, Chopina 12/18, 87-100 Toru\'n, Poland.

e-mail: tomas@mat.umk.pl; tel.: +48 566112951; fax: +48 56 6112987.}

\section{Introduction}
\label{sec1}

Let $E$ be a separable locally compact metric space and let $m$ be
a Radon measure on $E$ such that supp$[m]=E$. In the present paper
we study semilinear equations of the form
\begin{equation}
\label{eq1.1} -Au=f(x,u)+\mu,
\end{equation}
where  $\mu$ is a Borel measure on $E$,
$f:E\times\BR\rightarrow\BR$ is a measurable function such that
$f(\cdot,u)=0$, $u\le 0$, and  $f$ is  nonincreasing and
continuous with respect to $u$. As for the operator $A$, we assume
that it is a negative definite self-adjoint Dirichlet operator on
$L^2(E;m)$. Saying that $A$ is a Dirichlet operator we mean that
\[
(Au,(u-1)^+)\le0,\quad u\in D(A).
\]
Equivalently, operator $A$ corresponds to some symmetric Dirichlet
form $(\EE,D[\EE])$ on $L^2(E;m)$ in the sense that
\begin{equation}
\label{eq1.3} D(A)\subset D[\EE],\quad \EE(u,v)=(-Au,v),\quad u\in
D(A),v\in D[\EE]
\end{equation}
(see \cite{Fukushima,MR}.)
The class of such operators  is quite large.  It contains many
local as well as nonlocal operators. The model examples are
Laplace operator $\Delta$ (or uniformly elliptic divergence form
operator) and the  fractional Laplacian $\Delta^{\alpha}$ with
$\alpha\in(0,1)$. Many other examples are to be found in
\cite{Fukushima,MR}.

Let Cap denote  the capacity  determined by $(\EE,D[\EE])$ (see
Section \ref{sec2}). It is known (see \cite{FST}) that any Borel
signed measure $\mu$ on $E$ admits a decomposition
\[
\mu=\mu_c+\mu_d
\]
into the singular (concentrated) part $\mu_c$ with respect to Cap
and  the absolutely continuous (diffuse, smooth) part $\mu_d$ with
respect to Cap. The smooth part $\mu_d$ is fully characterized in
\cite{KR:JMAA}.

The study of semilinear equations of the form (\ref{eq1.1}) in
case $\mu$ is smooth, i.e. when $\mu_c=0$,  goes back to the
papers by Brezis and Strauss \cite{BS} and Konishi \cite{Konishi}
In \cite{BS,Konishi} the existence of a solution of (\ref{eq1.1})
is proved for $\mu\in L^1(E;m)$. At present existence, uniqueness
and regularity results are available for equation (\ref{eq1.1})
involving general bounded smooth measure $\mu$  and operator
corresponding to Dirichlet form (see Klimsiak and Rozkosz
\cite{KR:JFA} for the case of symmetric regular Dirichlet form and
\cite{KR:CM} for the case of quasi-regular, possibly non-symmetric
Dirichlet form). The case $\mu_c\neq0$ is much more involved. Ph.
B\'enilan and H. Brezis \cite{BB} has observed that in such a case
equation (\ref{eq1.1}) need not have a solution even if
$A=\Delta$. In \cite{BMP} (see also \cite{BMP1}) H. Brezis, M.
Marcus and A.C. Ponce introduced the concept of good measure, i.e.
a bounded measure for which (\ref{eq1.1}) has a solution, and the
concept of reduced measure, i.e. the largest good measure, which
is less then or equal to $\mu$. In case $A=\Delta$  these concepts
are by now quite well investigated (see \cite{BB,BMP}). The
situation is entirely different in case of more general local
operators or nonlocal operators. There are known, however, some
existence and uniqueness results for (\ref{eq1.1}) in case $A$ is
a diffusion operator (see V\'eron \cite{Ve}) and in case
$A=\Delta^{\alpha}$ with $\alpha \in(0,1)$ (see Chen and V\'eron
\cite{CV}).

The main purpose of the  paper is to present a new approach to
(\ref{eq1.1}) that provides a unified way of treating
(\ref{eq1.1}) for the whole class of negative defined self-adjoint
Dirichlet operators $A$ and for $\mu$ from  some class of measures
$\BM$ including the class $\MM_b$ of bounded signed Borel measures
on $E$. In particular, we give a new definition of a solution of
(\ref{eq1.1})  and investigate the structure of good and reduced
measures relative to (\ref{eq1.1}). In case $A=\Delta$ our
definition is equivalent to the definition of a solution adopted
in \cite{BB,BMP}, so  our results generalize the results of
\cite{BB,BMP} to wide class of operators. In fact, they generalize
the existing results even in case $A=\Delta$, because in this case
$\MM_b\varsubsetneq\BM$ and $\BM$ contains important in
applications unbounded measures. The second purpose of our paper
is to give a probabilistic interpretation for solutions of
(\ref{eq1.1}).

First, some remarks concerning our definition of a solution and
the class $\BM$ are in order. Suppose we  want to consider problem
(\ref{eq1.1}) for some class of measures $\mathbb{M}$ including
$L^1(E;m)$. Considering $f\equiv 0$ in (\ref{eq1.1}) we see that
then $G:=-A^{-1}$ should be well defined on $L^1(E;m)$, i.e. the
following condition should be satisfied:
\begin{equation}
\label{eq1.03} Gg\equiv\,\uparrow
\lim_{N\rightarrow\infty}\int_0^N T_t g\,dt<\infty,\quad
m\mbox{-a.e.},\quad g\in L^{1,+}(E;m).
\end{equation}
Condition (\ref{eq1.03}) is nothing but the statement that the
semigroup $\{T_t,t\ge 0\}$ generated by $A$ (or, equivalently, the
Dirichlet form $(\EE,D[\EE]))$ is transient (see \cite[Section
1.5]{Fukushima}). It is well known that then there exists a kernel
$\{R(x,dy),x\in E\}$ such that for every $g\in L^{1,+}(E;m)$,
\[
\int_E g(y)R(\cdot,dy)=Gg,\quad m\mbox{-a.e.}
\]
If $u$ is a solution of (\ref{eq1.1}) with $f\equiv 0$ then
\[
u\cdot m(dx)=R\circ\mu(dx),
\]
where $R\circ\mu$ is a Borel measure defined as
\[
\int_E g(x) (R\circ\mu)(dx)=\int_E\int_Eg(y)R(x,dy)\mu(dx), \quad
g\in\BB^+(E).
\]
Therefore $R\circ\mu$ must be absolutely continuous with respect
to the measure $m$ for every bounded Borel measure $\mu$. This
condition is known in the literature as the Meyer hypothesis (L)
(see \cite{BG}) or the condition of absolute continuity of the
resolvent $\{G_\alpha,\alpha>0\}$ (see \cite{Fukushima}).

For the reasons explained above in the paper we assume that
$\{T_t,t\ge0\}$ is transient and hypothesis (L) is satisfied. It
is known that under these assumptions there exists a Borel
function $r:E\times E\rightarrow \BR_+$ such that
\[
r(x,y)m(dy)=R(x,dy),\quad x\in E.
\]
Using the kernel $r$ we can give our first, purely analytical
definition of a solution of (\ref{eq1.1}). Namely, we say that a
Borel function $u$ on $E$ is a solution of (\ref{eq1.1}) if
\begin{equation}
\label{eq1.2}
u(x)=\int_E f(y,u(y))r(x,y)\,dy+\int_Er(x,y)\mu(dy)
\end{equation}
for $m$-a.e. $x\in E$. Of course, to make this definition correct
we have to assume that the integrals in (\ref{eq1.2}) exist.
Therefore the class $\BM$ we consider consists of Borel measures
$\mu$ on $E$ such that  $\int_Er(x,y)\,|\mu|(dy)<\infty$ for
$m$-a.e. $x\in E$. We will show that $\MM_b(E)\subset \mathbb{M}$.
In general, the inclusion is strict. For instance, if
$A=\Delta^\alpha$, $\alpha\in(0,1]$, on an open set $D\subset
\BR^d$, then  $\BM$ includes the set of all Borel measures $\mu$
on $E$ such that $\delta^\alpha\cdot\mu\in\MM_b$, where
$\delta(x)=\mbox{dist}(x,\partial D)$. We also show  that in case
$\mu\in\MM_b$ and $A$ is a uniformly elliptic divergence form
operator on a bounded domain in $\BR^d$ definition (\ref{eq1.2})
is equivalent to  Stampacchia's definition by duality (see
\cite{Stampacchia}).

Unfortunately,  definition (\ref{eq1.2}) is rather inconvenient
for studying (\ref{eq1.1}). One of the main results of the paper
says that (\ref{eq1.2}) is equivalent to our second, probabilistic
in nature definition of a solution. At first glance the
probabilistic definition seems to be  more complicated than
(\ref{eq1.2}), but as a matter of fact suits much better to the
purposes of the present paper. Let $\mathbb{X}=(\{X_t,t\ge 0\},
\{P_x,x\in E\})$ be a Hunt process with life time $\zeta$
associated with the form $(\EE,D[\EE])$. We say that $u$ is a
probabilistic solution of (\ref{eq1.1}) if
\begin{enumerate}
\item[(a)] $f(\cdot,u)\cdot m\in\mathbb{M}$ and there exists
a local martingale additive functional $M$ of $\BX$ such that
\[
u(X_t)=u(X_0)-\int_0^tf(X_r,u(X_r))\,dr-\int_0^t\,dA^{\mu_d}_r
+\int_0^t\,dM_r,\quad t\ge 0,\quad P_x\mbox{-a.s.}
\]
for quasi every (q.e. for short)  $x\in E$ (Here $A^{\mu_d}$
denotes a continuous additive functional of $\BX$ of finite
variation in the Revuz correspondence with $\mu_d$),
\item[(b)] for every polar set $N\subset E$, every stopping time
$T\ge\zeta$ and every sequence of stopping times $\{\tau_k\}$ such
that $\tau_k\nearrow T$ and $E_x\sup_{t\le \tau_k}|u(X_t)|<\infty$
for $x\in E\setminus N$ and $k\ge 1$  we have
\[
E_xu(X_{\tau_k})\rightarrow R\mu_c(x),\quad x\in E\setminus N,
\]
where $E_x$ denotes the integration with respect to probability
$P_x$ and
\[
R\mu_c(x)=\int_E r(x,y)\mu_c(dy),\quad x\in E.
\]
\end{enumerate}
The above probabilistic definition allows us to develop a general
theory of equations of the form (\ref{eq1.1}). Moreover, in our
opinion,  the theory based on the probabilistic definition is
elegant and  simple.

We first prove  some regularity results. We show that if $u$ is a
solution of (\ref{eq1.1}) and $\mu\in\MM_b$ then $T_k(u)\in
D_e[\EE]$ and
\[
\EE(T_k(u),T_k(u))\le 2k\|\mu\|_{TV},\quad k\ge 0,
\]
where $T_k(u)=\min\{\max\{u,-k\},k\}$ and $D_e[\EE]$ is an
extension of the domain of the form $\EE$ such that the pair
$(\EE,D_e[\EE])$ is a Hilbert space (see \cite{Fukushima}). We
also prove Stampacchia's type inequality which says that for every
strictly positive excessive function $\rho$ (for $\rho\equiv 1$
for instance) and $\mu\in \MM_\rho=\{\mu\in\mathbb{M}:
\|\mu\|_{TV,\rho}:=\|\rho\cdot\mu\|_{TV}<\infty\}$,
\[
\|f(\cdot,u)\|_{L^1(E;\rho\cdot m)}\le \|\mu\|_{TV,\rho}.
\]

We next study the structure of the set $\GG$ of good measures and
the set of reduced measures relative to $A,f$. Let us recall that
the reduced measure is the largest measure $\mu^*\in\mathbb{M}$
such that $\mu^*\le\mu$ and there exists a solution of
(\ref{eq1.1}) with $\mu$ replaced by $\mu^*$. A measure
$\mu\in\BM$ is good, if $\mu^*=\mu$. By results of
\cite{KR:JFA,KR:CM}, if $\mu_c=0$, then $\mu$ is good. In the
present paper we first show that
\[
\mu-\mu^*\bot \mbox{Cap}.
\]
Then we show that, as in the case of Laplace operator, the set
$\GG$ is convex and closed under the operation of taking maximum
of two measures. We also show that $\mu\in\GG$ if and only if
\[
\mu=g-Av
\]
for some functions $g,v$ on $E$ such that $g\cdot m,
f(\cdot,v)\cdot m\in\mathbb{M}$  and $Av\in\mathbb{M}$. From this
characterization of $\GG$ we deduce that for every strictly
positive excessive function $\rho$,
\[
L^1(E;\rho\cdot m)+\mathcal{A}_\rho(f)
=\GG\cap\MM_{\rho},
\]
where
\[
\A_\rho(f)=\{\mu\in \MM_\rho:f(\cdot,R\mu)\in L^1(E; \rho\cdot m)\}.
\]
We also show that under some additional assumption on the growth
of $f$ (it is satisfied for instance if $|f(x,u)|\le c_1+c_2
e^{u^2}$), for every strictly positive excessive function $\rho$,
\[
\overline{\A_\rho(f)}=\GG\cap\MM_{\rho},
\]
where the closure is taken in the space
$(\MM_\rho,\|\cdot\|_{TV,\rho})$.

In Section \ref{sec6} we prove the so-called inverse maximum
principle and Kato's type inequality. In our context Kato's
inequality says that if $u$ is a solution of (\ref{eq1.1}) then
$Au^+\in\mathbb{M}$ and
\[
\mathbf{1}_{\{u>0\}}(Au)_d\le (Au^+)_d,\quad  (Au)^+_c= (Au^+)_c.
\]
This form of Kato's inequality for  Laplace operator was proved
by H. Brezis and A.C. Ponce in \cite{BrezisPonce}.

In the last section we study the set of good measures $\GG$  for
problem (\ref{eq1.1}) with $f$ having  at most polynomial growth,
i.e. for $f$ satisfying
\[
|f(x,u)|\le c|u|^{p},\quad x\in E,\, u\ge 0
\]
for some $p>1$. For this purpose, we introduce a new  capacity
Cap$_{A,p}$, which in the special case, when  $A=\Delta^\alpha$ on
an open bounded set $D\subset \BR^d$ with zero boundary condition
is equivalent  to the Bessel capacity
defined as
\begin{equation}
\label{eq1.5} \mbox{\rm Cap}^D_{\alpha,p}(K)
=\inf\{\|\eta\|^p_{W^{2\alpha,p}(D)}:\eta\in C_c^\infty(D),\,
\eta\ge \mathbf{1}_K\}
\end{equation}
for compact sets $K\subset D$. We prove that if $\mu\in\BM$ and
$\mu^+$ is absolutely continuous with respect to Cap$_{A,p'}$,
where  $p'$ denotes the H\"older conjugate to $p$, then a solution
of (\ref{eq1.1}) exists, i.e. $\mu\in\GG$. For $f$  of the form
\begin{equation}
\label{eq1.6} f(x,u)=-u^p,\quad x\in E, \, u\ge 0
\end{equation}
we fully characterize the set $\GG$. Namely, we prove that the
absolute continuity of $\mu^+$ with respect to Cap$_{A,p'}$ is
also necessary  for the existence of a solution of (\ref{eq1.1}).
Thus, in case $f$ is given by (\ref{eq1.6}),
\[
\GG=\{\mu\in\BM:\mu^+\ll\mbox{\rm Cap}^D_{\alpha,p'}\}.
\]
Moreover,
\[
\mu^*=\mu^+_{\mbox{\rm\tiny Cap}_{A,p'}}-\mu^-,
\]
where $\mu^+_{\mbox{\rm\tiny Cap}_{A,p'}}$ denotes the absolutely
continuous part of $\mu^+$ with respect to Cap$_{A,p'}$.

\section{Preliminaries} \label{sec2}

In the paper $E$  is a locally compact separable metric space and
$m$ is a positive Radon measure on $E$ such that supp$[m]=E$. By
$(\EE,D[\EE])$ we denote a symmetric regular Dirichlet form on
$L^2(E;m)$ (see \cite{Fukushima} or \cite{MR} for the
definitions). We will always assume that $(\EE,D[\EE])$ is
transient, i.e. there exists a strictly positive function $g$ on
$E$ such that
\[
\int_E |u(x)|g(x)\, m(dx)\le \|u\|_{\EE},\quad u\in D[\EE],
\]
where $\|u\|_\EE=\sqrt{\EE(u,u)}$, $u\in D[\EE]$. As usual, for
$\alpha>0$ we set $\EE_\alpha(u,v)=\EE(u,v)+\alpha(u,v)$, $u,v\in
D[\EE]$, where $(\cdot,\cdot)$ is the usual inner product in
$L^2(E;m)$.

By Riesz's theorem, for every $\alpha>0$  and $f\in L^2(E;m)$
there exists a unique function $G_\alpha f\in L^2(E;m)$ such that
\[
\EE_\alpha(G_{\alpha}f,g)=(f, g),\quad g\in L^2(E;m).
\]
It is an elementary check that $\{G_\alpha,\, \alpha>0\}$ is a
strongly continuous contraction resolvent on $L^2(E;m)$. By
$\{T_t,t\ge 0\}$ we denote the associated semigroup and by
$(A,D(A))$ the self-adjoint negative definite Dirichlet operator
generated by $\{T_t\}$. It is well known that $A$ satisfies
(\ref{eq1.3}) (see \cite[Section 1.3]{Fukushima}). Conversely, one
can prove (see \cite[page 39]{MR}) that for every self-adjoint
negative definite Dirichlet operator $A$ there exists a unique
Dirichlet form $(\EE,D[\EE])$ such that (\ref{eq1.3}) holds.

Given a Dirichlet form $(\EE,D[\EE])$ we define capacity
Cap$:2^E\rightarrow \mathbb{R}^{+}$ as follows: for an open
$U\subset E$ we set
\[
\mbox{Cap}(U)=\inf\{\EE(u,u):u\in D[\EE],\, u\ge
\mathbf{1}_U,\,m\mbox{-a.e.}\}
\]
and then for arbitrary $A\subset E$ we set
\[
\mbox{Cap}(A)=\inf\{\mbox{Cap}(U): A\subset U\subset E,\, U\mbox{
open}\}.
\]
An increasing sequence $\{F_n\}$ of closed subsets of $E$ is
called nest if Cap$(E\setminus F_n)\rightarrow 0$ as $n\rightarrow
\infty$. A subset $N\subset E$ is called exceptional if
Cap$(N)=0$. We say that some property $P$ holds quasi everywhere
(q.e. for short) if a set for which it does not hold is
exceptional.

We say that a function $u$  on $E$ is quasi-continuous
if there exists a nest $\{F_n\}$ such that $u_{|F_n}$ is
continuous for every $n\ge 1$. It is known that each function
$u\in D[\EE]$ has a quasi-continuous $m$-version.

A Borel measure $\mu$ on $E$ is called smooth if it does not
charge exceptional sets and there exists a nest $\{F_n\}$ such
that $|\mu|(F_n)<\infty,\, n\ge 1$. By $S$ we denote the set of
all  smooth measures on $E$.

By $S^{(0)}_0$ we denote the set of all measures $\mu\in S$ for
which there exists $c>0$ such that
\begin{equation}
\label{eq2.1} \int_E |u|\,d|\mu|\le c \sqrt{\EE(u,u)},\quad u\in
D[\EE].
\end{equation}

For a given Dirichlet form $(\EE,D[\EE])$ one can always define
the so-called extended Dirichlet space $D_e[\EE]$ as the set of
$m$-measurable functions on $E$ for which  there exists an
$\EE$-Cauchy sequence $\{u_n\}\subset D[\EE]$ convergent $m$-a.e.
to $u$ (the so-called approximating sequence). One can show that
for $u\in D_e[\EE]$ the limit $\EE(u,u)=\lim_{n\rightarrow
\infty}\EE(u_n,u_n)$ exists and does not depend on the
approximating sequence $\{u_n\}$ for $u$. Each element $u\in
D_e[\EE]$ has a quasi-continuous version. It is  known that
$(\EE,D[\EE])$ is transient if and only if $(\EE,D_e[\EE])$ is a
Hilbert space. In the latter case for a given measure $\mu\in
S_{0}^{(0)}$ inequality (\ref{eq2.1}) holds for every $u\in
D_e[\EE]$.

By $\MM_b$ we denote the set of all bounded Borel measures on $E$
and by $\MM_{0,b}$ the subset of $\MM_b$ consisting of smooth
measures.

Given a Borel measurable function $\eta$ on $E$ and a Borel
measure $\mu$ on $E$ we write
\[
( \mu, \eta )=\int_E\eta\,d\mu.
\]
By $u\cdot \mu$ w denote the Borel measure on $E$ defined as
\[
( f,u\cdot\mu)=(f\cdot u,\mu),\quad f\in \mathcal{B}(E)
\]
whenever the integrals exist.

With a regular symmetric Dirichlet form $(\EE,D[\EE])$ one can
associate uniquely  a symmetric Hunt process
$\mathbb{X}=((X_t)_{t\ge 0}, (P_x)_{x\in E},
(\FF_t)_{t\ge0},\zeta)$ (see \cite[Section 7.2]{Fukushima}). It is
related to $(\EE,D[\EE])$ by the formula
\[
T_tf(x)=E_xf(X_t),\quad t\ge 0,\quad m\mbox{-a.e.},
\]
where $E_x$ stands for the expectation with respect to the measure
$P_x$. For  $\alpha, t\ge 0$ and $f\in\mathcal{B}^{+}(E)$ we write
\[
R_\alpha f (x)=E_x\int_0^\zeta e^{-\alpha t}f(X_t)\,dt, \quad
p_tf(x)=E_xf(X_t),\quad x\in E.
\]
Observe that for $\alpha, t>0$ and $f\in L^2(E;m)$,
\[
R_\alpha f= G_\alpha f,\quad p_t f= T_t f,\quad m\mbox{-a.e.}
\]
For simplicity we denote $R_0$ by $R$. We say that some function
on $E$ is measurable if it is universally  measurable, i.e.
measurable with respect to the $\sigma$-algebra
\[
\mathcal{B}^{*}(E)=\bigcap_{\mu\in\mathcal{P}(E)}\mathcal{B}^\mu(E),
\]
where $\mathcal{P}(E)$ is the set of all probability measures on
$E$ and $\mathcal{B}^\mu(E)$ is the completion of $\mathcal{B}(E)$
with respect to the measure $\mu$.

A positive measurable function $u$ on $E$ is called
$\alpha$-excessive if for every $\beta>0$, $(\alpha+\beta)
R_{\alpha+\beta} u\le u$ and $\alpha R_\alpha u\nearrow u$ as
$\alpha\rightarrow \infty$. By $\mathcal{S}_\alpha$ we denote the
set of $\alpha$-excessive functions. We put
$\mathcal{S}=\mathcal{S}_0$.

By $S_{00}^{(0)}$ we denote the set  of  all $\mu\in S_0^{(0)}$
such that $|\mu|(E)<\infty$ and $R|\mu|$ is bounded.
For a  Borel set $B$ we set
\[
\sigma_B=\inf\{t>0; X_t\in B\},\quad D_A=\inf\{t\ge 0; X_t\in B\},
\quad \tau_B=\sigma_{E\setminus B},
\]
i.e. $\sigma_B$ is the first hitting time of $B$, $D_A$ is the
first debut time of $B$ and $\tau_B$ is the first exit time of
$B$.

By $\mathcal{T}$ we denote the set of all  stopping times with
respect to the filtration $(\FF_t)_{t\ge 0}$ and by $\mathbf{D}$
the set of all measurable functions $u$ on $E$ for which the
family
\[
\{u(X_\tau),\, \tau\in\mathcal{T}\}
\]
is uniformly integrable with respect to the measure $P_x$ for q.e.
$x\in E$.

For a Borel measure $\mu$  on $E$ and $\alpha\ge 0$ we denote by
$\mu\circ R_\alpha$  the measure defined as
\[
(f,\mu\circ R_\alpha)=(R_\alpha f,\mu),\quad f\in\mathcal{B}(E),
\]
and by $P_\mu$ we denote the measure
\[
P_\mu(A)=\int_E P_x(A)\, \mu(dx),\quad A\in \FF_\infty.
\]
In the whole paper we assume that $m$ is the  reference measure
for $\mathbb{X}$, i.e. for all $x\in E$ and $\alpha>0$ we have
$R_\alpha(x,\cdot)\ll m$. It is well known (see \cite[Lemma
4.2.4]{Fukushima}) that in this case for every $\alpha\ge 0$ there
exists a $\BB(E)\otimes\BB(E)$ measurable function
\[
r_\alpha: E\times E\rightarrow \BR^+
\]
such that for every $x\in E$ the mapping $y\mapsto r_\alpha(x,y)$
is $\alpha$-excessive and
\[
R_\alpha f (x)=\int_E f(y)r_\alpha(x,y)\,m(dy),\quad x\in E.
\]
It is also clear that by symmetry of $\mathbb{X}$,
$r_\alpha(x,y)=r_\alpha(y,x)$ for $x,y\in E,\,\alpha\ge 0$. In
what follows we put $r(x,y)=r_0(x,y),\, x,y\in E$. Thanks to the
existence of $r_\alpha$ we may define $R_\alpha\mu$ for arbitrary
positive Borel measure $\mu$ by putting
\[
R_\alpha\mu( x)=\int_E r_\alpha(x,y)\,\mu(dy).
\]
It is well known (see \cite[Section 5.1]{Fukushima} and
\cite[Theorem V.2.1]{BG} that for each $\mu\in S$ there exists a
unique perfect positive continuous additive functional $A^\mu$ in
the Revuz duality with $\mu$, and moreover,
\[
(R_\alpha \mu)(x)=E_x\int_0^\zeta e^{-\alpha t}\,dA^\mu_t,\quad
x\in E.
\]

\section{Linear equations} \label{sec3}

In this section we  give some definitions of a solution of the
linear problem
\begin{equation}
\label{eq3.1}
-Au=\mu,
\end{equation}
where $\mu$ is a Borel measure such that $R|\mu|(x)<\infty$ for
q.e. $x\in E$. The class of such measures  will be denoted by
$\BM$.

In the whole paper we adopt the convention that $\int_E
r(x,y)\,d\mu(y)=0$ for every Borel measure $\mu$ on $E$  such that
$\int_E r(x,y)\,d\mu^+(y)=\int_E r(x,y)\,d\mu^-(y)=\infty$. We
call $u:E\rightarrow\BR\cup\{-\infty,\infty\}$ a numerical
function on $E$.

\subsection{Solutions defined via the resolvent kernel
and regularity results}

\begin{df}
\label{df3.1} We say that a measurable numerical function $u$ on
$E$ is a solution of (\ref{eq3.1}) if
\[
u(x)=\int_E r(x,y)\,d\mu(y)\quad \mbox{for q.e. }x\in E.
\]
\end{df}

Let us note that  by \cite[Proposition V.1.4]{BG}, if the above equality holds
for every $x\in E$, then $u$ is Borel measurable. Since $\mu\in
\BM$, $u$ is finite q.e.

\begin{stw}
\label{stw3.1}
$\MM_b\subset\BM.$
\end{stw}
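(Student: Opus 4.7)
The plan is to reduce to the nonnegative case and then exploit the symmetry of the resolvent kernel together with transience to integrate $R|\mu|$ against a strictly positive weight.

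First, I would reduce to $\mu\ge 0$: since $|\mu|=\mu^++\mu^-$ is bounded whenever $\mu\in\MM_b$, it suffices to show $R\nu(x)<\infty$ q.e.\ for every finite positive Borel measure $\nu$. Second, I would invoke transience of $(\EE,D[\EE])$, which is a standing assumption in Section~\ref{sec2}: this guarantees the existence of a strictly positive Borel function $g\in L^1(E;m)$ such that $Rg$ is $m$-essentially bounded (this is the standard construction of a reference/transience function, as in Section~1.5--1.6 of \cite{Fukushima}).

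Third, using the symmetry $r(x,y)=r(y,x)$, the nonnegativity of the integrand and Fubini's theorem, I would compute
\[
\int_E R\nu(x)\,g(x)\,m(dx)=\int_E\!\!\int_E r(y,x)g(x)\,m(dx)\,\nu(dy)=\int_E Rg(y)\,\nu(dy)\le\|Rg\|_{L^\infty}\,\nu(E)<\infty.
\]
Since $g>0$ pointwise, this forces $R\nu<\infty$ $m$-a.e.

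Finally, I would upgrade $m$-a.e.\ finiteness to q.e.\ finiteness via excessivity. For each $y\in E$ the section $x\mapsto r(x,y)=r(y,x)$ is excessive (by the property of $r_\alpha$ recorded in Section~\ref{sec2} together with symmetry), so $R\nu$ is excessive as a nonnegative mixture of excessive functions. Under hypothesis~(L), excessive functions are quasi-continuous and the set $\{R\nu=\infty\}$ is finely closed; an $m$-null finely closed set is polar (see \cite[Chap.~4]{Fukushima}), hence $R\nu<\infty$ q.e.

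The main obstacle is really this last step: one must be careful that the passage from $m$-a.e.\ to q.e.\ finiteness uses both excessivity and the absolute continuity hypothesis~(L) already imposed in the paper. The other steps are essentially a symmetric Fubini computation and the elementary construction of a transience reference function.
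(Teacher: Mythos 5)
Your strategy coincides with the paper's: test $R|\mu|$ against a strictly positive weight whose potential is uniformly bounded, and conclude by symmetry of $r$ and Fubini. The paper merely localizes the weight to a nest, working with $\mathbf{1}_{F_n}f$ where $\sup_{x\in E}R(\mathbf{1}_{F_n}f)(x)<\infty$, instead of a single $g$ with bounded potential; both devices come from transience. Your explicit final step --- upgrading $m$-a.e.\ finiteness of the excessive function $R\nu$ to q.e.\ finiteness --- is correct and is precisely the point the paper leaves implicit, so making it visible is a genuine improvement in exposition.

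There is, however, one place where the argument as written would fail. You take $Rg$ to be only \emph{$m$-essentially} bounded and then estimate $\int_E Rg\,d\nu\le\|Rg\|_{L^\infty}\,\nu(E)$. Since $\nu$ is an arbitrary finite Borel measure --- and the whole point of the proposition is that it need not be absolutely continuous with respect to $m$, nor even smooth --- it may charge the $m$-null set on which $Rg$ exceeds its essential supremum, and then the displayed inequality gives no control of $\int Rg\,d\nu$. This is exactly the subtlety the paper flags: the ess-sup bound furnished by \cite[Theorem 2.2.4]{Fukushima} is upgraded to a genuine pointwise supremum via \cite[Proposition II.3.2]{BG}, using that $R(\mathbf{1}_{F_n}f)$ is excessive and that, under hypothesis (L), an excessive function dominated $m$-a.e.\ by a constant (constants being excessive) is dominated by it everywhere. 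Inserting the same one-line observation for your $Rg$ closes the gap, after which your proof is complete and essentially identical to the paper's.
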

\begin{dow}
Since the form $\EE$ is assumed to be transient, there exists a
strictly positive Borel function $f$ on $E$ such that $Rf<\infty$,
q.e. From this we conclude that $f\cdot m$ is a smooth measure.
Hence, by \cite[Theorem 2.2.4]{Fukushima}, there exists an
increasing sequence $\{F_n\}$ of closed subsets of $E$ such that
$\bigcup_{n\ge 1} F_n=E$, q.e. and $\sup_{x\in E}
R(\mathbf{1}_{F_n} f)(x)<\infty$ (see also comments following
\cite[Corollary 2.2.2]{Fukushima}). As a matter of fact, in
\cite{Fukushima} in the last condition $\sup$ is replaced by
$\mbox{\rm ess}\sup$ with respect to $m$, however in view of
\cite[Proposition II.3.2]{BG}, it holds true also with supremum
norm. We have
\[
(R|\mu|,\mathbf{1}_{F_n}f)\le (|\mu|,R(\mathbf{1}_{F_n}f))
\le \|\mu\|_{TV}\cdot \|R(\mathbf{1}_{F_n}f)\|_{\infty}.
\]
Hence $R|\mu|$ is finite q.e., i.e. $\mu\in\BM$.
\end{dow}
\medskip

Using Definition \ref{df3.1}  we can easily prove some regularity
result for solutions of (\ref{eq3.1}). For this purpose, for
$k\ge0$ set
\[
T_k(u)=\max\{\min\{u,k\},-k\},\quad u\in\BR.
\]

\begin{tw}
\label{th3.1} Let $\mu\in\MM_b(E)$ and let $u$ be a solution of
\mbox{\rm{(\ref{eq3.1})}}. Then $T_k(u)\in D_e[\EE]$ and for every
$k\ge0$,
\[
\EE(T_k(u),T_k(u))\le k\|\mu\|_{TV}.
\]
\end{tw}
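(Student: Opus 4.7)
The plan is to establish the bound first for the nice subclass $S_{00}^{(0)}$ of measures, for which $R\mu$ is known to lie in $D_e[\EE]$ and to be bounded, and then extend to general $\mu\in\MM_b$ by approximation together with lower semicontinuity of $\EE$ on the Hilbert space $(D_e[\EE],\EE)$.

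First I would treat $\mu\in S_{00}^{(0)}$ and set $u=R\mu$. The defining properties of $S_{00}^{(0)}$ yield $u\in D_e[\EE]$, $u$ bounded, and the Riesz-type identity $\EE(u,v)=(v,\mu)$ for every $v\in D_e[\EE]$. Because $T_k$ is a normal contraction, $D_e[\EE]$ is stable under it, so $T_k u\in D_e[\EE]$. Choosing $v=T_k u$ gives $\EE(T_k u,u)=(T_k u,\mu)$. The Markov property of the form, applied via the Beurling--Deny decomposition, yields
\[
\EE(T_k u,u-T_k u)\ge 0,
\]
because the local part vanishes by the chain rule, while the killing part and the jump part are nonnegative since both $T_k$ and $s\mapsto s-T_k(s)$ are nondecreasing. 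Combined with the previous identity and $|T_k u|\le k$ this gives
\[
\EE(T_k u,T_k u)\le\EE(T_k u,u)=(T_k u,\mu)\le k\|\mu\|_{TV}.
\]

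For general $\mu\in\MM_b$ I would construct an approximating sequence $\mu_n\in S_{00}^{(0)}$ with $\|\mu_n\|_{TV}\le\|\mu\|_{TV}$ and $u_n:=R\mu_n\to u:=R\mu$ q.e. For the smooth part $\mu_d$ one uses the nest $\{F_\ell\}$ from the proof of Proposition \ref{stw3.1} (with $R(\mathbf{1}_{F_\ell}h)$ bounded for some strictly positive $h$) and sets $\mu_{d,\ell}^{\pm}=\mathbf{1}_{F_\ell}\mu_d^{\pm}\in S_{00}^{(0)}$. For the concentrated part $\mu_c$ the function $R\mu_c^{\pm}\le R|\mu|$ is q.e.\ finite and excessive; its truncation at level $n$ is a bounded excessive function dominated by a potential, and the Riesz decomposition represents it as $R\mu_{c,n}^{\pm}$ for some $\mu_{c,n}^{\pm}\in S_{00}^{(0)}$ with total mass controlled by $\|\mu_c^{\pm}\|_{TV}$. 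Assembling these pieces yields the desired $\mu_n$. By Step~1, $\EE(T_k u_n,T_k u_n)\le k\|\mu_n\|_{TV}\le k\|\mu\|_{TV}$. Since $(D_e[\EE],\EE)$ is a Hilbert space, $\{T_k u_n\}$ admits a weakly convergent subsequence; by Banach--Saks, Ces\`aro means converge in form norm, and hence q.e.\ along a further subsequence. The q.e.\ limit must coincide with $T_k u$ because $u_n\to u$ q.e., so $T_k u\in D_e[\EE]$, and lower semicontinuity of $\EE$ gives $\EE(T_k u,T_k u)\le k\|\mu\|_{TV}$.

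The main obstacle is the approximation of the concentrated part $\mu_c$: since $\mu_c$ may charge exceptional sets, a naive restriction to a nest does not work, and one must invoke the Riesz decomposition of truncated excessive functions to produce smooth approximants whose potentials are bounded and whose total masses remain controlled by $\|\mu\|_{TV}$.
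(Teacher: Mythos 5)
Your route is genuinely different from the paper's, and it is worth contrasting the two. The paper never decomposes or approximates $\mu$: it works directly with the approximating forms $\EE^{(\alpha)}(v,w)=\alpha(v-\alpha R_\alpha v,w)$, which make sense for arbitrary measurable functions. From the defining relation $u\cdot m=\mu\circ R$ and the resolvent identity one gets $\EE^{(\alpha)}(u,T_k(u))=(\mu,\alpha R_\alpha T_k(u))\le k\|\mu\|_{TV}$, the Markov property of $\alpha R_\alpha$ gives $\EE^{(\alpha)}(T_k(u),T_k(u))\le\EE^{(\alpha)}(u,T_k(u))$, and \cite[Lemma I.2.11(ii)]{MR} converts the uniform bound $\sup_{\alpha}\EE^{(\alpha)}(T_k(u),T_k(u))\le k\|\mu\|_{TV}$ into membership in $D_e[\EE]$ together with the estimate. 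This is a few lines and uses no Riesz decomposition, no Beurling--Deny, and no splitting of $\mu$ into diffuse and concentrated parts. Your Step 1 (the case $\mu\in S_{00}^{(0)}$, via $\EE(T_ku,u-T_ku)\ge0$ and $\EE(T_ku,u)=(T_ku,\mu)$) is correct, and the weak-compactness/Banach--Saks passage to the limit in Step 2 is also sound, so the overall strategy can be made to work; but it buys nothing over the direct argument and costs considerably more potential theory.

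The genuine gaps are in your approximation of the concentrated part. First, the mass bound $\nu_{c,n}^{\pm}(E)\le\mu_c^{\pm}(E)$ is asserted but not proved, and it does not follow merely from $R\nu_{c,n}^{\pm}\le R\mu_c^{\pm}$; you need something like a sequence of positive bounded Borel functions $h_j$ with $Rh_j\nearrow 1$ (\cite[Proposition II.2.6]{BG}, exactly as in the paper's Lemma \ref{lm4.1}) and the duality $(\nu_{c,n}^{\pm},Rh_j)=(R\nu_{c,n}^{\pm},h_j)\le(R\mu_c^{\pm},h_j)=(\mu_c^{\pm},Rh_j)$. Without this the final constant is not $k\|\mu\|_{TV}$. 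Second, the Riesz decomposition of \cite{GetoorGlover} represents the truncation $\min\{R\mu_c^{\pm},n\}$ as $R\nu_{c,n}^{\pm}$ for some positive \emph{Borel} measure only; your Step 1 requires $\nu_{c,n}^{\pm}\in S_{00}^{(0)}$, i.e.\ in particular that it charges no polar set and that its potential lies in $D_e[\EE]$ with $\EE(R\nu_{c,n}^{\pm},v)=\int\tilde v\,d\nu_{c,n}^{\pm}$. This is true (one can check $\sup_\alpha\EE^{(\alpha)}(w,w)\le \|w\|_\infty\,\nu_{c,n}^{\pm}(E)$ for $w=R\nu_{c,n}^{\pm}$ and then invoke the correspondence between excessive elements of $D_e[\EE]$ and $S_0^{(0)}$), but note that filling this gap already reproduces the paper's $\EE^{(\alpha)}$ computation, which is a strong hint that the detour through $S_{00}^{(0)}$ is unnecessary. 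A similar, milder remark applies to the diffuse part: the nest with $\mathbf{1}_{F_\ell}\mu_d^{\pm}\in S_{00}^{(0)}$ requires the $0$-order (transient) version of \cite[Theorem 2.2.4]{Fukushima}, as used in the proof of Proposition \ref{stw3.1}, and should be cited as such.
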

\begin{dow}
For  $\alpha\ge 0$ and  measurable functions $u,v$ on $E$ set
\[
\EE^{(\alpha)}(u,v)\equiv\alpha(u-\alpha R_\alpha u,v)
\]
whenever the integral exists. By the definition of a solution of
(\ref{eq3.1}), $u\cdot m=\mu\circ R$. Hence
\[
(R_\alpha u,\eta)=(u\cdot m,R_\alpha\eta)=(\mu\circ R,R_\alpha\eta)
=(\mu,RR_\alpha\eta).
\]
Therefore
\begin{align*}
\EE^{(\alpha)}(u,T_k(u))=\alpha(\mu,R T_k(u)-\alpha R_\alpha R
T_k(u))
&=\alpha (\mu, R T_k(u)-(R T_k(u)-R_\alpha T_k(u)))\\
&=(\mu,\alpha R_\alpha T_k(u))\le k \|\mu\|_{TV}.
\end{align*}
On the other hand, since $\alpha R_\alpha$ is Markovian, we have
\[
\EE^{(\alpha)}(T_k(u),T_k(u))\le \EE^{(\alpha)}(u,T_k(u)).
\]
Consequently,
\[
\sup_{\alpha\ge 0} \EE^{(\alpha)}(T_k(u),T_k(u))\le k\|\mu\|_{TV},
\]
so applying  \cite[Lemma I.2.11(ii)]{MR} we get the desired
result.
\end{dow}

\begin{uw}
(i) By Theorem \ref{th3.1}, $T_k(u)\in D[\EE]$ if $m(E)<\infty$,
because by \cite[Theorem 1.5.2(iii)]{Fukushima},
$D[\EE]=D_e[\EE]\cap L^2(E;m)$. \smallskip\\
(ii) $T_k(u)\in D[\EE]$ if the form satisfies  Poincar\'e type
inequality $c(u,u)\le \EE(u,u)$ for  every $u\in D[\EE]$ and some
$c>0$, because then $D_e[\EE]=D[\EE]$.
\end{uw}

\subsection{Probabilistic solutions}

In this subsection we give an equivalent definition  of solution
of (\ref{eq3.1}) using stochastic equations involving a Hunt
process $\mathbb{X}$ associated with the Dirichlet operator $A$.
We begin with the following lemma.

\begin{lm}
\label{lm3.1} Assume that $\mu,\nu\in\BM$ and there is
$\alpha_0\ge0$ such that $ R_\alpha \mu\ge R_\alpha\nu$ for
$\alpha\ge\alpha_0.$ Then  $\mu\ge\nu$.
\end{lm}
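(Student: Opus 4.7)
The approach is to recast the hypothesis as an inequality of the measures $\mu\circ R_\alpha$ and $\nu\circ R_\alpha$ on $E$, and then to exploit that $\alpha(\mu\circ R_\alpha)\to\mu$ vaguely as $\alpha\to\infty$, so that the inequality passes to the limit.

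First, by symmetry of the kernel $r_\alpha$ one has $\mu\circ R_\alpha=R_\alpha\mu\cdot m$ and $\nu\circ R_\alpha=R_\alpha\nu\cdot m$. Since capacity-zero sets are $m$-null, the hypothesis yields $\mu\circ R_\alpha\ge\nu\circ R_\alpha$ as measures on $E$ for every $\alpha\ge\alpha_0$. Testing against $\alpha f$ with $f\in C_c^+(E)$, this reads
\[
\int_E\alpha R_\alpha f\,d\mu\ge\int_E\alpha R_\alpha f\,d\nu,\qquad \alpha\ge\alpha_0.
\]

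Next I let $\alpha\to\infty$. For $f$ bounded continuous the probabilistic representation $\alpha R_\alpha f(x)=E_x[f(X_{T_\alpha})\mathbf{1}_{\{T_\alpha<\zeta\}}]$, with $T_\alpha$ an independent exponential of rate $\alpha$, together with right-continuity of the paths of $\mathbb{X}$, gives $\alpha R_\alpha f(x)\to f(x)$ at q.e.\ $x\in E$. If this pointwise convergence may be carried under the integrals, then $\int f\,d\mu\ge\int f\,d\nu$ for every $f\in C_c^+(E)$, and hence $\mu\ge\nu$ as Borel measures on $E$.

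The main obstacle lies in justifying the interchange of limit and integral, since the uniform bound $|\alpha R_\alpha f|\le\|f\|_\infty$ need not be integrable against $|\mu|$ or $|\nu|$ when these are unbounded measures. I would overcome this by constructing, for each $f\in C_c^+(E)$, a bounded excessive majorant of $f$ that is integrable against $|\mu|+|\nu|$: by transience of $(\EE,D[\EE])$ there exists $h>0$ with $Rh$ bounded, and the Lusin-type exhaustion employed in the proof of Proposition~\ref{stw3.1} lets us cut off $h$ so that $\int Rh\,d(|\mu|+|\nu|)<\infty$. Then $\phi:=Rh$ is $0$-excessive and satisfies $\alpha R_\alpha\phi\le\phi$ for every $\alpha\ge 0$, so that for $f\in C_c^+(E)$ dominated by a multiple of $\phi$ dominated convergence applies and completes the argument.
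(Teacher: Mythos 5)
Your overall strategy --- push $\alpha R_\alpha$ onto a test function by symmetry of the kernel, let $\alpha\to\infty$, and control the passage to the limit by an excessive majorant integrable against $|\mu|+|\nu|$ --- is exactly the strategy of the paper's proof. The gap is in how you set up the domination. You test against $f\in C_c^+(E)$ and claim that such an $f$ is bounded by a constant multiple of $\phi=Rh$; this requires $Rh$ to be bounded away from zero on the compact support of $f$. The function $Rh$ is excessive, hence finely continuous (quasi-continuous), but for a general Hunt process associated with a regular Dirichlet form it need not be lower semicontinuous in the topology of $E$, so $\inf_K Rh$ may be $0$ even though $Rh>0$ everywhere; nothing in your construction rules this out. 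The paper sidesteps the issue by changing the test class: it picks a strictly positive $\psi$ with $(R\psi,|\mu|+|\nu|)<\infty$ and tests against functions of the form $\eta R\psi$ with $\eta\in C_b^+(E)$. These are dominated by $\|\eta\|_\infty R\psi$ for free, dominated convergence applies, and one obtains $(\eta R\psi,\mu)\ge(\eta R\psi,\nu)$ for all such $\eta$, i.e. $R\psi\cdot\mu\ge R\psi\cdot\nu$, whence $\mu\ge\nu$ since $R\psi>0$. This also disposes of a point you gloss over: measures in $\BM$ need not be locally finite, so $\int f\,d\mu$ for $f\in C_c^+(E)$ is not a priori meaningful, whereas $R\psi\cdot\mu$ is a finite measure by construction.

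A second, smaller point: you assert $\alpha R_\alpha f(x)\to f(x)$ for q.e.\ $x$. Quasi-everywhere convergence is not enough, because measures in $\BM$ may charge polar sets --- the concentrated parts $\mu_c$, carried by capacity-zero sets, are precisely the objects of interest in this paper --- so the exceptional set could support all of $\mu$. Fortunately the probabilistic representation you invoke, combined with right-continuity and normality of the Hunt process, yields $\alpha R_\alpha f(x)\to f(x)$ for \emph{every} $x\in E$ when $f$ is bounded and continuous; you need to claim (and use) the everywhere version, since the dominated convergence is performed against $\mu$ and $\nu$ themselves.
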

\begin{dow}
Since $\mu,\nu\in \BM$, there exists a strictly positive Borel
function  $\psi$ on $E$ such that $(R\psi,|\mu|+|\nu|)<\infty$.
So, it is clear that it is enough to prove that $(\eta
R\psi,\mu)\ge (\eta R\psi,\nu)$ for every $\eta\in C^+_b(E)$. Let
$\eta\in C_b^+(E)$. An elementary calculus shows that $\alpha
R_\alpha (\eta R\psi)(x)\rightarrow \eta R\psi (x)$ for every
$x\in E$. On the other hand, $\alpha R_\alpha (\eta R\psi)(x)\le
\|\eta\|_\infty R\psi(x),\, x\in E$. Hence, by the Lebesgue
dominated convergence theorem,
\[
(\eta R\psi,\mu)=\lim_{\alpha\rightarrow \infty} (\alpha R_\alpha
(\eta R\psi),\mu)= \lim_{\alpha\rightarrow \infty}(\eta
R\psi,\alpha R_\alpha\mu) \ge \lim_{\alpha\rightarrow \infty}
(\eta R\psi,\alpha R_\alpha\nu)=(\eta R\psi,\nu),
\]
which completes the proof.
\end{dow}

\begin{tw}
\label{th3.2} Assume that $\mu\in \BM^+$ and $\mu \bot$\mbox{\rm
Cap}. Then $u=R\mu$ is quasi-continuous and the process
$[0,\infty)\ni t\mapsto u(X_t) $ is a c\'adl\'ag local martingale
under the measure $P_x$ for q.e. $x\in E$.
\end{tw}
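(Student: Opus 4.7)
The plan is to first show that $u = R\mu$ is $0$-excessive and quasi-continuous; next to deduce the c\`adl\`ag property of $u(X_t)$ from quasi-continuity and the Hunt property; and finally to identify the Doob--Meyer compensator of the supermartingale $u(X_t)$ as zero by exploiting that the support of $\mu$ is polar and is avoided by $\BX$.

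For quasi-continuity, I would note that for each $y \in E$ the kernel $r_\alpha(\cdot, y)$ is $\alpha$-excessive, so Tonelli's theorem shows that $R_\alpha \mu$ is $\alpha$-excessive; letting $\alpha \downarrow 0$ exhibits $R\mu$ as a monotone increasing limit, hence $0$-excessive. Since $\mu \in \BM^+$ we have $u = R\mu < \infty$ q.e., and the standard theorem on finite excessive functions of a Hunt process associated with a regular symmetric Dirichlet form under hypothesis~(L) (see \cite[Theorem~4.6.1]{Fukushima}) then provides quasi-continuity; in fact the explicit kernel representation already furnishes the quasi-continuous representative. Combining quasi-continuity of $u$ with the quasi-left-continuity of $\BX$ on $[0,\zeta)$ and the fact that the paths of $\BX$ avoid any prescribed exceptional set $P_x$-a.s. for q.e. $x$ then gives that $t \mapsto u(X_t)$ is c\`adl\`ag under $P_x$ for q.e. $x \in E$.

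For the local martingale property, excessiveness of $u$ immediately yields that $u(X_t)$ is a nonnegative $P_x$-supermartingale. I would localize by $\sigma_n = \inf\{t > 0 : u(X_t) \geq n\}$, which tend to $\zeta$ by quasi-continuity and finiteness of $u$ q.e. Each stopped process $u(X_{t \wedge \sigma_n})$ is bounded, so its Doob--Meyer decomposition $u(X_{t \wedge \sigma_n}) = u(X_0) + M^{(n)}_t - A^{(n)}_t$ is well-defined; the predictable increasing part $A^{(n)}$ is a PCAF of $\BX$ stopped at $\sigma_n$, and by the Revuz correspondence (see \cite[Theorem~5.1.3]{Fukushima}) it corresponds to a unique smooth measure $\nu$, giving $u = R\nu + h$ for a nonnegative excessive $h$ whose associated supermartingale has no compensator. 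Uniqueness of this Riesz-type decomposition for excessive functions, together with smoothness of $\nu$ (so $\nu$ cannot charge the polar support of $\mu$), would force $\nu = \mu_d$, the diffuse part of $\mu$. Since $\mu \perp \mbox{Cap}$ gives $\mu_d = 0$, we would have $\nu = 0$, hence $A^{(n)} \equiv 0$ and $u(X_{t \wedge \sigma_n})$ is a martingale for each $n$; therefore $u(X_t)$ is a local martingale with localizing sequence $(\sigma_n)$.

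The main obstacle is the identification of the Revuz measure $\nu$ of the Doob--Meyer compensator with the diffuse part $\mu_d$ of $\mu$. This rests on the uniqueness of a Riesz-type decomposition of excessive functions of $\BX$ into a potential-of-smooth-measure part plus a compensator-free part, and on the injectivity (modulo harmonic summands) of $\lambda \mapsto R\lambda$ on smooth measures. Once this identification is in place, the hypothesis $\mu \perp \mbox{Cap}$ delivers $\nu = 0$ at once and the local martingale property follows.
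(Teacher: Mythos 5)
Your overall architecture (excessiveness $\Rightarrow$ quasi-continuity and the c\`adl\`ag property, then Doob--Meyer for the positive supermartingale $u(X)$, then showing the compensator vanishes) is the same as the paper's, and the first two stages are essentially fine. But the step you yourself flag as ``the main obstacle'' is a genuine gap, not a routine appeal to known results. You propose to identify the Revuz measure $\nu$ of the compensator with $\mu_d$ by ``uniqueness of a Riesz-type decomposition.'' The Riesz decomposition of an excessive function splits it as (potential of a PCAF) $+$ (harmonic part), and its uniqueness only tells you that $R\nu$ is the potential part of $u=R\mu$; it does \emph{not} tell you that the harmonic part equals $R\mu_c$, i.e.\ that $\nu=\mu_d$. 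Establishing exactly that is the content of the theorem: you would need to know beforehand that the potential of a measure carried by a polar set has zero compensator, which is what you are trying to prove. The ``injectivity of $\lambda\mapsto R\lambda$ on smooth measures'' does not help either, since the identity you have is $R\mu=R\nu+h$ with $h$ unknown, not $R\mu=R\nu$.

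The paper closes this gap by a concrete computation rather than an abstract uniqueness argument: from the resolvent identity $u=R_\alpha(\mu+\alpha u)$ and integration by parts of $e^{-\alpha t}u(X_t)$ along a localizing sequence $\{\tau_k^x\}$, it derives $\lim_k E_x e^{-\alpha\tau_k^x}u(X_{\tau_k^x})=R_\alpha(\mu-\nu)(x)\ge 0$ for every $\alpha\ge 0$, and then Lemma \ref{lm3.1} yields $\nu\le\mu$. Since $\nu$ is smooth (charges no set of capacity zero) and $0\le\nu\le\mu$ with $\mu\bot\mbox{Cap}$, this forces $\nu=0$ --- note the conclusion reached is $\nu\le\mu$, not $\nu=\mu_d$. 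You would need to reproduce some version of this argument. Two smaller points: the stopped process $u(X_{t\wedge\sigma_n})$ need not be bounded (the path can jump over level $n$), though it is of class (D), so Doob--Meyer still applies; and you should justify both that the compensator is continuous (the paper deduces this from quasi-left continuity of the filtration and of $X$ together with quasi-continuity of $u$, so that $u(X)$ has no predictable jumps) and that it can be chosen as an additive functional independent of $x$ (the paper obtains this via a UT-condition approximation by $\alpha R_\alpha u$ and \cite[Lemma A.3.3]{Fukushima}), before the Revuz correspondence with a smooth measure can be invoked.
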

\begin{dow}
Let $u_\alpha=\alpha R_\alpha u$, $\alpha>0$. Then
\[
u_\alpha(x)=\alpha E_x\int_0^\zeta e^{-\alpha r}u(X_r)\,dr,\quad
x\in E.
\]
By the Markov property, for every $t\ge 0$ and $x\in E$ we have
\[
u_\alpha(X_t)
=\alpha E_x(\int_t^\zeta e^{-\alpha(r-t)}u(X_r)\,dr|\FF_t),
\quad P_x\mbox{-a.s.}
\]
By \cite[Theorem A.2.5]{Fukushima} the  processes $t\mapsto
u_\alpha(X_t)$, $t\mapsto u(X_t)$ are c\'adl\'ag under the measure
$P_x$ for every $x\in E$, while  by \cite[Theorem
4.6.1]{Fukushima}, $u$ is quasi-continuous. Let us put
\[
\bar{N}^{\alpha,x}_t =\alpha E_x(\int_0^\zeta e^{-\alpha
r}u(X_r)\,dr|\FF_t)-u_\alpha(X_0),\quad t\ge0,
\]
and let $N^{\alpha,x}$ denote a c\'adl\'ag modification of the
martingale $\bar N^{\alpha,x}$. Then for every $x\in E$,
\[
e^{-\alpha t} u_\alpha(X_t)=u_\alpha(X_0)
-\alpha \int_0^t e^{-\alpha r}u(X_r)\,dr+\int_0^t\,dN^{\alpha,x}_r,
\quad t\ge 0,\quad P_x\mbox{-a.s.}
\]
By the integration by parts formula applied to the processes
$e^{\alpha t}$ and $e^{-\alpha t}u_\alpha(X)$ we get
\[
u_\alpha(X_t)=u_\alpha(X_0)-\int_0^t\,dA^\alpha_r
+\int_0^t\,dM^{\alpha,x}_r,\quad t\ge 0,\quad P_x\mbox{-a.s.,}
\]
where
\[
M^{\alpha,x}_t=\int_0^t e^{-\alpha r}\, dN^{\alpha,x}_r,
\quad A^\alpha_t=\alpha\int_0^t(u-u_\alpha)(X_r)\,dr,\quad t\ge 0.
\]
Since $u$ is an excessive function,  $A^\alpha$ is an increasing
process and $u_\alpha(x)\nearrow u(x)$ for every $x\in E$ as
$\alpha\nearrow\infty$. Hence
\[
u_\alpha(X_t)\nearrow u(X_t),\quad t\ge 0,
\qquad u_\alpha(X_{t-})\nearrow u(X_{t-}),\quad t>0.
\]
Let $[u_{\alpha}(X)]$, $[u(X)]$ denote the  quadratic variations
of processes $u_{\alpha}(X)$ and $u(X)$, respectively. By
\cite[Theorem 4.2.2]{Fukushima} there exists an exceptional set
$N\subset E$ such that for every $x\in E\setminus N$,
\[
[u_\alpha(X)]_{t-}=u_\alpha(X_{t-}),\quad [u(X)]_{t-}=u(X_{t-}),
\quad t\in (0,\zeta),\quad P_x\mbox{-a.s.}
\]
Let $\zeta_i$, $\zeta_p$  denote the totally inaccessible and the
predictable  part of $\zeta$, respectively. From \cite[Theorem
4.2.2]{Fukushima} it also follows that
\[
[u_\alpha(X)]_{\zeta_i-}=u_\alpha(X_{\zeta_i-}),\quad [u(X)]_{\zeta_i-}
=u(X_{\zeta_i-}),\quad P_x\mbox{-a.s.},
\]
while by the fact that $u_\alpha, u$ are potentials,
\[
[u_\alpha(X)]_{\zeta_p-}=[u(X)]_{\zeta_p-}=0.
\]
By what has already been proved,
\[
u_\alpha(X_t)\nearrow u(X_t),\quad t\ge 0,
\qquad [u_\alpha(X)]_{t-}\nearrow [u(X)]_{t-},\quad t>0.
\]
By the generalized Dini theorem (see \cite[p.
185]{DellacherieMeyer1}), $u_\alpha(X)\nearrow u(X)$ uniformly on
compact subsets of $[0,\infty)$. Observe that for every $t\ge 0$
and q.e. $x\in E$,
\[
E_xu(X_t)\le\liminf_{\alpha\rightarrow \infty} E_xu_\alpha(X_t)
\le \liminf_{\alpha\rightarrow \infty}E_xu_\alpha(X_0)= u(x).
\]
Hence $u(X)$ is a supermartingale and $\lim_{t\rightarrow\infty}
E_xu(X_t)<\infty$. Therefore by \cite[Theorem III.13]{Protter}, for
q.e. $x\in E$ there exists  an increasing predictable process
$C^x$ with $E_xC^x_\zeta<\infty$ and  a c\'adl\'ag local
martingale $M^x$ such that
\[
u(X_t)=u(X_0)-C^x_t+M^x_t,\quad t\ge 0,\quad P_x\mbox{-a.s.}
\]
Since the filtration is quasi-left continuous, $M^x$ has no
predictable jumps. Since $X$ is quasi-left continuous, it also has
no predictable jumps, which implies that $u(X)$ has no predictable
jumps, because $u$ is quasi-continuous. Thus $C^x$ is continuous.
Since $u(X)$  is a special semimartingale,  there exists a
localizing sequence $\{\tau^x_n\}\subset \mathcal{T}$ such that
for every $n\ge 1$,
\begin{equation}
\label{eq3.2}
E_x\sup_{t\le\tau^x_n}|u(X_t)|<\infty.
\end{equation}
By \cite[Proposition 3.2]{JMP}, $\{u(X)-u_\alpha(X)\}$  satisfies
the so-called condition UT. Therefore by \cite[Corollary
2.8]{JMP}, $[u(X)-u_\alpha(X)]_t\rightarrow 0$ in probability
$P_x$ for every $t\ge 0$. But
$[u(X)-u_\alpha(X)]=[M^x-M^{\alpha,x}]$. Hence
$[M^x-M^{\alpha,x}]_t\rightarrow 0$ in probability $P_x$, which
due to (\ref{eq3.2}) is equivalent to the convergence of
$\{M^{\alpha,x}\}$ to $M^x$ in ucp (uniform on compacts in
probability). Since $u_\alpha(X)\rightarrow u(X)$ in ucp,
$A^\alpha\rightarrow C^x$ in ucp. In fact, by (\ref{eq3.2}), for
every $n\ge1$ we have
\[
E_x\sup_{t\le \tau^x_n}|A^{\alpha}_t-C^x_t|\rightarrow 0.
\]
By \cite[Lemma A.3.3]{Fukushima} there exists a process $A$ such
that $A=C^x$ for q.e. $x\in E$. Of course, $A$ is a positive
continuous additive functional. Putting
\begin{equation}
\label{eq3.03} M_t=u(X_t)-u(X_0)+A_t,\quad t\ge 0,
\end{equation}
we see that $M$ is an additive functional and $M^x=M$, $P_x$-a.s.
for q.e. $x\in E$. Thus $M$ is a local martingale additive
functional. By \cite[Theorem 5.1.4]{Fukushima} there exists
$\nu\in S$ such that $A=A^\nu$. In particular, for every
$\alpha\ge 0$,
\[
R_\alpha\nu(x)=E_x\int_0^\zeta e^{-\alpha t}\, dA^\nu_t
\]
for q.e. $x\in E$. Observe that by the resolvent identity, for
every $\alpha\ge0$ we have
\begin{equation}
\label{eq3.3} u=R_\alpha(\mu+\alpha u).
\end{equation}
On the other hand, by (\ref{eq3.2}) and the integration by parts
formula applied to the processes $e^{-\alpha t}$ and $u(X_t)$,
\begin{equation}
\label{eq3.4}
u(x)=E_x e^{-\alpha \tau^x_k}u(X_{\tau^x_k})
+E_x \int_0^{\tau^x_k}e^{-\alpha r}\,dA^\nu_r
+\alpha E_x\int_0^{\tau^x_k}e^{-\alpha r}u(X_r)\,dr.
\end{equation}
It is clear that
\[
E_x \int_0^{\tau^x_k}e^{-\alpha r}\,dA^\nu_r \rightarrow
R_\alpha\nu(x),\quad \alpha E_x\int_0^{\tau^x_k}e^{-\alpha
r}u(X_r)\,dr \rightarrow \alpha R_\alpha u(x)
\]
as $k\rightarrow\infty$. From this, (\ref{eq3.3}) and
(\ref{eq3.4}) we conclude that for q.e $x\in E$,
\[
\lim_{k\rightarrow\infty}E_x e^{-\alpha \tau^x_k}
u(X_{\tau^x_k})=R_\alpha(\mu-\nu)(x).
\]
By this and \cite[Proposition II.3.2]{BG}, $R_\alpha(\mu-\nu)\ge
0$.  Since $\alpha\ge0$ was arbitrary, applying Lemma \ref{lm3.1}
shows that $\mu\ge \nu$. Since $\mu \bot$Cap, it follows that
$\nu\equiv 0$ or, equivalently, that $A^\nu\equiv 0$. Therefore
from (\ref{eq3.03}) it follows that $u(X)$ is a local martingale.
\end{dow}
\medskip

Let us recall that a process $M$ is called a local martingale
additive functional (MAF) if it is an additive functional and $M$
is an $(\FF,P_x)$-local martingale for q.e. $x\in E$.

\begin{tw}
\label{th3.3} Assume that $\mu\in\BM^+$ and let $u=R\mu$. Then $u$
is quasi-continuous and there exists a local MAF $M$ such that
\begin{equation}
\label{eq3.5}
u(X_t)=u(X_0)-\int_0^t\,dA^{\mu_d}_r+\int_0^t\,dM_r,
\quad t\ge 0,\quad P_x\mbox{-a.s.}
\end{equation}
for q.e. $x\in E$. Moreover, for every polar set $N\subset E$,
every stopping time $T\ge\zeta$ and sequence $\{\tau_k\}\subset
\mathcal{T}$ such that $\tau_k\nearrow T$ and
$E_x\sup_{t\le\tau_k}u(X_t)<\infty$ for $x\in E\setminus N$ and
$k\ge1$ we have
\begin{equation}
\label{eq3.06} \lim_{k\rightarrow\infty}E_x
u(X_{\tau_k})=R\mu_c(x),\quad x\in E\setminus N.
\end{equation}
\end{tw}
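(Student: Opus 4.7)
The natural approach is to split $\mu = \mu_c + \mu_d$ into its Cap-singular and smooth parts, treat each potential separately, and then combine. Since $\mu \in \BM^+$ and $\mu_c,\mu_d \ge 0$ with $\mu_c,\mu_d \le \mu$, we have $R\mu_c, R\mu_d \le R\mu < \infty$ q.e., so both lie in $\BM^+$.

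For the Cap-singular part $\mu_c$, Theorem \ref{th3.2} applies directly: $R\mu_c$ is quasi-continuous and $R\mu_c(X)$ is a c\`adl\`ag local martingale under $P_x$ for q.e.\ $x$, so $M^c_t := R\mu_c(X_t) - R\mu_c(X_0)$ is a local MAF. For the smooth part $\mu_d$ I would invoke the standard Fukushima decomposition of the potential of a smooth measure: since $\mu_d \in S$ and $R\mu_d < \infty$ q.e., $R\mu_d$ admits a quasi-continuous version, and there exists a local MAF $M^d$ such that for q.e.\ $x$,
\[
R\mu_d(X_t) = R\mu_d(X_0) - A^{\mu_d}_t + M^d_t,\quad t \ge 0,\quad P_x\mbox{-a.s.}
\]
Adding these two identities yields (\ref{eq3.5}) with the quasi-continuous function $u = R\mu_c + R\mu_d$ and the local MAF $M := M^c + M^d$.

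To prove (\ref{eq3.06}), I would split $E_x u(X_{\tau_k}) = E_x R\mu_c(X_{\tau_k}) + E_x R\mu_d(X_{\tau_k})$. Since both $R\mu_c$ and $R\mu_d$ are nonnegative and bounded above by $u$, the hypothesis $E_x \sup_{t \le \tau_k} u(X_t) < \infty$ gives analogous bounds for each summand. Combined with $E_x A^{\mu_d}_{\tau_k} \le E_x A^{\mu_d}_\zeta = R\mu_d(x) < \infty$, this shows that the stopped local martingales $M^c_{\cdot \wedge \tau_k}$ and $M^d_{\cdot \wedge \tau_k}$ are uniformly integrable, hence true mean-zero martingales. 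Optional stopping yields $E_x R\mu_c(X_{\tau_k}) = R\mu_c(x)$ for every $k$, and $E_x R\mu_d(X_{\tau_k}) = R\mu_d(x) - E_x A^{\mu_d}_{\tau_k}$. Since $\tau_k \nearrow T \ge \zeta$ and $A^{\mu_d}$ is carried by $[0,\zeta]$, monotone convergence gives $E_x A^{\mu_d}_{\tau_k} \nearrow R\mu_d(x)$, whence $E_x R\mu_d(X_{\tau_k}) \to 0$ and the desired limit $E_x u(X_{\tau_k}) \to R\mu_c(x)$ follows.

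The main technical obstacle I anticipate is the clean verification of the uniform integrability of the stopped local martingales. One has to dominate $|M^c_{t \wedge \tau_k}|$ by $R\mu_c(X_0) + \sup_{s \le \tau_k} R\mu_c(X_s)$ and $|M^d_{t \wedge \tau_k}|$ by $R\mu_d(X_0) + \sup_{s \le \tau_k} R\mu_d(X_s) + A^{\mu_d}_{\tau_k}$, and check $P_x$-integrability of each bound for q.e.\ $x$. Once this is done, optional stopping applies to each stopped martingale and the monotone convergence step closes the argument; the remaining bookkeeping is only the union of at most countably many exceptional sets on which the identities may fail.
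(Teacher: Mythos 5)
Your proposal is correct and follows essentially the same route as the paper: the paper also splits $\mu=\mu_c+\mu_d$, applies Theorem \ref{th3.2} to $R\mu_c$ and the known decomposition of the potential of a smooth measure (citing \cite[Lemma 4.3]{KR:JFA}) to $R\mu_d$, adds the two identities, and obtains (\ref{eq3.06}) by noting that the stopped local martingales $M^{v,\tau_k},M^{w,\tau_k}$ are uniformly integrable and that $E_xA^{\mu_d}_{\tau_k}\to R\mu_d(x)$. Your more explicit verification of the uniform integrability via domination by $\sup_{t\le\tau_k}u(X_t)$ is exactly what the paper leaves implicit.
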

\begin{dow}
Let $w=R\mu_c$ and $v=R\mu_d$. It is well known (see
\cite[Lemma 4.3]{KR:JFA}) that $v$ is quasi-continuous and that there
exists a uniformly integrable MAF $M^v$ such that
\begin{equation}
\label{eq3.6}
v(X_t)=v(X_0)-\int_0^t\,dA^{\mu_d}_r+\int_0^t\,dM^v_r,
\quad t\ge 0,\quad P_x\mbox{-a.s.}
\end{equation}
for q.e. $x\in E$. By Theorem \ref{th3.2}, $w$ is quasi-continuous
and there exists a local MAF $M^w$ such that
\begin{equation}
\label{eq3.7}
w(X_t)=w(X_0)+\int_0^t\,dM^w_r,\quad t\ge 0,\quad P_x\mbox{-a.s.}
\end{equation}
for q.e. $x\in E$. Let $N\subset E$ be a polar set such that
(\ref{eq3.6}),  (\ref{eq3.7}) hold for $x\in E\setminus N$. Let
$\{\tau_k\}$ be as in the formulation of the theorem. Then
$M^{v,\tau_k},\, M^{w,\tau_k}$ are both uniformly integrable and
by (\ref{eq3.6}) and (\ref{eq3.7}),
\[
u(x)=E_xu(X_{\tau_k})+E_x\int_0^{\tau_k}\,dA^{\mu_d}_r,
\quad x\in E\setminus N.
\]
Letting $k\rightarrow \infty$ in the above equation yields
\[
R\mu(x)=u(x)=\lim_{k\rightarrow \infty}E_xu(X_{\tau_k}) +
R\mu_d(x),\quad x\in E\setminus N,
\]
which proves (\ref{eq3.06}). Adding (\ref{eq3.6}) to (\ref{eq3.7})
gives (\ref{eq3.5}).
\end{dow}

\begin{uw}
\label{uw3.1} Under the assumptions of Theorem \ref{th3.3}, for
every $\alpha>0$,
\[
\lim_{k\rightarrow\infty}E_xe^{-\alpha\tau_{k}}u(X_{\tau_k})
=R_\alpha\mu_c(x),\quad x\in E\setminus N.
\]
To see this we use (\ref{eq3.3}) and arguments following it.
\end{uw}
\medskip

We are now ready to introduce the second definition of a solution
of (\ref{eq3.1}) making use of the Hunt process $\mathbb{X}$
associated with operator $A$. Solutions of (\ref{eq3.1}) in the
sense of this definition will be called probabilistic solutions or
simply solutions, because we will show that our second definition
is equivalent to the definition  via the resolvent kernel.

\begin{df}
\label{def3.9} We say that a measurable numerical function $u$ on
$E$ is a probabilistic solution of (\ref{eq3.1}) if
\begin{enumerate}
\item[(a)] there exists a local MAF $M$ such that for q.e. $x\in
E$,
\[
u(X_t)=u(X_0)-\int_0^t\,dA^{\mu_d}_r+\int_0^t\,dM_r, \quad t\ge
0,\quad P_x\mbox{-a.s.},
\]
\item[(b)] for every polar set $N\subset E$, every stopping time
$T\ge\zeta$ and  every sequence $\{\tau_k\}\subset\mathcal{T}$
such that $\tau_k\nearrow T$ and $E_x\sup_{t\le
\tau_k}|u(X_t)|<\infty$ for every $x\in E\setminus N$ and $k\ge 1$
we have
\[
E_xu(X_{\tau_k})\rightarrow R\mu_c(x),\quad x\in E\setminus N.
\]
\end{enumerate}
Any sequence $\{\tau_k\}$ with the  properties listed in (b) will
be called the reducing sequence for $u$, and we will say that
$\{\tau_k\}$ reduces $u$.
\end{df}

\begin{uw}
Since $u(X)$ in the above definition is a special semimartingale,
there exists at least one reducing sequence $\{\tau_k\}$ for $u$.
In fact, the stopping times defined as
\[
\tau_k=\inf\{t\ge 0; \, |u(X_t)|\ge k\}\wedge k,\quad k\ge1
\]
form a reducing sequence (see the reasoning in the proof of
\cite[Theorem 51.1]{Sharpe}).
\end{uw}

\begin{uw}
If $\mu$ is a smooth measure then Definition \ref{def3.9} reduces
to the definition of a solution introduced in \cite{KR:JFA}.
Indeed, by condition (a),
\[
u(x)=E_xu(X_{\tau_k})+E_x\int_0^{\tau_k}\,dA^{\mu_d}_r
\]
for q.e. $x\in E$. Therefore letting $k\rightarrow \infty$ and
using  (b) we see that for q.e. $x\in E$,
\[
u(x)=E_x\int_0^{\zeta}\,dA^{\mu_d}_r.
\]
Note that if $A$ is a uniformly elliptic divergence form operator
then by \cite[Proposition 5.3]{KR:JFA}, $u$ is also a solution of
(\ref{eq3.1}) in the sense of Stampacchia (see
\cite{Stampacchia}). In the sequel we will show that this holds
true for general Borel measures and  wider class of operators.
\end{uw}

\begin{stw}
A measurable function $u$ on $E$ is a probabilistic solution of
\mbox{\rm{(\ref{eq3.1})}} if and only if it is a solution of
\mbox{\rm{(\ref{eq3.1})}} in the sense of Definition \ref{df3.1}.
\end{stw}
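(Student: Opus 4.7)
The plan is to reduce both implications to a single identity obtained by stopping the equation in Definition~\ref{def3.9}(a), taking expectation, and passing to the limit, with condition~(b) serving only to identify the limit. First observe that the Jordan decomposition $\mu=\mu^+-\mu^-$ is compatible with the Cap-decomposition: $\mu_d=(\mu^+)_d-(\mu^-)_d$, $|\mu_d|=(\mu^+)_d+(\mu^-)_d$, and since $\mu\in\BM$ we have $R|\mu_d|, R|\mu_c|<\infty$ q.e. Set $A^{\mu_d}:=A^{(\mu^+)_d}-A^{(\mu^-)_d}$; this signed continuous additive functional satisfies $|A^{\mu_d}_t|\le A^{|\mu_d|}_\zeta$ and $E_xA^{|\mu_d|}_\zeta=R|\mu_d|(x)<\infty$ for q.e.\ $x$ by the Revuz correspondence.

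For the implication ``probabilistic $\Rightarrow$ analytic'', I fix a probabilistic solution $u$ and pick any reducing sequence $\{\tau_k\}$ (whose existence is provided by the Remark following Definition~\ref{def3.9}). Taking $E_x$ on both sides of (a) at $t=\tau_k$ yields
\[
E_xu(X_{\tau_k})=u(x)-E_xA^{\mu_d}_{\tau_k}+E_xM_{\tau_k}.
\]
Rewriting $M_t=u(X_t)-u(X_0)+A^{\mu_d}_t$ and combining the reducing condition $E_x\sup_{t\le\tau_k}|u(X_t)|<\infty$ with the bound $E_xA^{|\mu_d|}_\zeta<\infty$ shows $E_x\sup_{t\le\tau_k}|M_t|<\infty$, so $M^{\tau_k}$ is a uniformly integrable martingale and $E_xM_{\tau_k}=0$. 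Dominated convergence gives $E_xA^{\mu_d}_{\tau_k}\to R\mu_d(x)$ (using $\tau_k\nearrow T\ge\zeta$), while condition~(b) gives $E_xu(X_{\tau_k})\to R\mu_c(x)$. Passing to the limit forces $u(x)=R\mu_d(x)+R\mu_c(x)=R\mu(x)$ q.e.

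For the converse, I assume $u=R\mu$ q.e. I apply Theorem~\ref{th3.3} to $\mu^+$ and $\mu^-$ separately to obtain local MAFs $M^\pm$ and the decompositions~(\ref{eq3.5}) for $R\mu^\pm(X_t)$, and then subtract and use linearity of the Revuz correspondence to recover condition~(a) with $M:=M^+-M^-$. To verify (b) for a given reducing sequence $\{\tau_k\}$, I repeat verbatim the computation of the forward direction: uniform integrability of $M^{\tau_k}$ (from the identity for $M_t$ together with the reducing assumption) yields $E_xM_{\tau_k}=0$, and dominated convergence forces $E_xA^{\mu_d}_{\tau_k}\to R\mu_d(x)$, so $E_xu(X_{\tau_k})\to u(x)-R\mu_d(x)=R\mu_c(x)$.

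The only genuinely technical step is the uniform integrability of $M^{\tau_k}$; once this is extracted from the formula $M_t=u(X_t)-u(X_0)+A^{\mu_d}_t$ combined with the reducing condition and $R|\mu_d|<\infty$ q.e., the rest of the argument is bookkeeping with the Jordan decomposition and an invocation of Theorem~\ref{th3.3}.
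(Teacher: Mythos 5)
Your proof is correct and follows essentially the same route as the paper: one direction is obtained by applying Theorem \ref{th3.3} (split over $\mu^{+}$ and $\mu^{-}$), and the other by stopping the semimartingale decomposition in (a), killing the martingale term, and using (b) together with the Revuz correspondence to conclude $u=R\mu_c+R\mu_d=R\mu$ q.e. You merely make explicit two points the paper leaves implicit, namely the reduction to positive measures and the uniform integrability of the stopped local martingale.
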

\begin{dow}
Assume that $u$ is a solution of (\ref{eq3.1}) in the sense of
Definition \ref{df3.1}. Then by Theorem \ref{th3.3}, $u$ is a
probabilistic solution. Now suppose that $u$ is a probabilistic
solution of (\ref{eq3.1}). Then using (a) and (b) of the
definition of a probabilistic solution of (\ref{eq3.1}) we obtain
\[
u(x)=R\mu_c(x)+E_x\int_0^\zeta\,dA^{\mu_d}_r=R\mu(x)
=\int_Er(x,y)\,\mu(dy)
\]
for q.e. $x\in E$.
\end{dow}

\section{Semilinear equations}

In what follows  $\mu\in\BM$ and $f:E\times\BR\rightarrow \BR$ is
a function satisfying the following conditions:  $\BR\ni y\mapsto
f(x,y)$ is continuous for every $x\in E$ and $E\ni x\mapsto
f(x,y)$ is measurable for every $y\in \BR$.

In this section we  consider semilinear equation of the form
\begin{equation}
\label{eq4.1}
-Au=f(x,u)+\mu.
\end{equation}

\begin{df}
We say that a measurable numerical function $u$ on $E$ is a
solution of (\ref{eq4.1}) if $f(\cdot,u)\cdot m\in \BM$ and $u$ is
a solution of (\ref{eq3.1}) with $\mu$ replaced by
$f(\cdot,u)\cdot m+\mu$.
\end{df}

We will need the following hypotheses:
\begin{enumerate}
\item[(H1)] for every $x\in E$ the mapping $y\mapsto f(x,y)$ in nonincreasing,
\item[(H2)] for every $y\in \BR$ the mapping $x\mapsto f(x,y)\in qL^1(E;m)$,
\item[(H3)] $f(\cdot,0)\cdot m\in \BM$.
\end{enumerate}

\subsection{Comparison results, a priori estimates and regularity
of solutions}

In the sequel, for a given real function $u$ on $E$ we write

\[
f_u(x)=f(x,u(x)),\quad x\in E.
\]

\begin{stw}
\label{stw4.1} Assume that $\mu_1,\mu_2\in\BM$, $\mu_1\le\mu_2$,
$f^1(x,y)\le f^2(x,y)$ for $x\in E,\, y\in\BR$ and $f^1$ or $f^2$
satisfies \mbox{\rm{(H1)}}. Then $u_1\le u_2$ q.e., where $u_1$
(resp. $u_2$) is a solution of \mbox{\rm{(\ref{eq4.1})}} with data
$f^1,\mu^1$ (resp. $f^2,\mu^2$).
\end{stw}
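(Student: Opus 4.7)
The plan is to use the probabilistic characterization of solutions established in Section \ref{sec3}. Assume without loss of generality that $f^1$ satisfies (H1); the case when $f^2$ does is symmetric. Set $w = u_1 - u_2$, $g(x) = f^1(x, u_1(x)) - f^2(x, u_2(x))$, and $\nu = \mu_1 - \mu_2$, so that $\nu \le 0$, and consequently $\nu_d \le 0$ and $\nu_c \le 0$. By the equivalence of Definitions \ref{df3.1} and \ref{def3.9}, each $u_i$ admits the semimartingale representation of Definition \ref{def3.9}(a), and subtracting these gives, for q.e. $x \in E$,
$$w(X_t) = w(X_0) - \int_0^t g(X_r)\, dr - A^{\nu_d}_t + N_t,\quad t\ge 0,\quad P_x\text{-a.s.,}$$
where $N = M^1 - M^2$ is a local MAF.

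The key monotonicity step is to introduce the entry time $\tau = \inf\{t \ge 0 : w(X_t) \le 0\}$. For $r < \tau$ we have $u_1(X_r) > u_2(X_r)$, so the nonincreasing character of $y \mapsto f^1(X_r, y)$ combined with $f^1 \le f^2$ gives
$$g(X_r) = [f^1(X_r, u_1(X_r)) - f^1(X_r, u_2(X_r))] + [f^1(X_r, u_2(X_r)) - f^2(X_r, u_2(X_r))] \le 0.$$
Moreover $A^{\nu_d}$ is nonincreasing because $\nu_d \le 0$. Choose a sequence $\{\sigma_k\} \subset \mathcal{T}$ reducing both $u_1$ and $u_2$ (for instance $\sigma_k = k \wedge \inf\{t: |u_1(X_t)| \vee |u_2(X_t)| \ge k\}$) and stop at $\tau \wedge \sigma_k$. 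Taking $E_x$ yields
$$w(x) = E_x w(X_{\tau \wedge \sigma_k}) + E_x\int_0^{\tau \wedge \sigma_k} g(X_r)\, dr + E_x A^{\nu_d}_{\tau \wedge \sigma_k},$$
and the last two terms are nonpositive by the above observations; hence $w(x) \le E_x w(X_{\tau \wedge \sigma_k})$.

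It remains to pass to the limit $k \to \infty$. Writing
$$E_x w(X_{\tau \wedge \sigma_k}) = E_x[w(X_\tau);\, \tau \le \sigma_k] + E_x[w(X_{\sigma_k});\, \tau > \sigma_k],$$
we note that on $\{\tau < \infty\}$ the right continuity of $w(X)$ together with the definition of $\tau$ forces $w(X_\tau) \le 0$, so the first summand is controlled. On $\{\tau > \sigma_k\}$ one has $w(X_{\sigma_k}) > 0$, while condition (b) of the probabilistic definition applied to $u_1$ and $u_2$ separately gives $E_x w(X_{\sigma_k}) \to R\nu_c(x) \le 0$. Splitting the second summand according to the events $\{\tau < \zeta\}$ and $\{\tau \ge \zeta\}$ and combining these bounds should give $\limsup_k E_x w(X_{\tau \wedge \sigma_k}) \le R\nu_c(x) \le 0$, whence $w(x) \le 0$ for q.e. $x$. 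The main obstacle I expect is precisely this limit: the restricted sequence $\{\tau \wedge \sigma_k\}$ does not itself satisfy the hypotheses of (b) on the whole probability space, so one must separate cases and justify the passage to the limit by the uniform integrability of $\{u_i(X_{\sigma_k})\}_k$ inherited from the reducing property together with Fatou's lemma on the event $\{\tau \ge \zeta\}$.
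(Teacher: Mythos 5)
Your first-passage argument is sound up to the inequality $w(x)\le E_x w(X_{\tau\wedge\sigma_k})$: the signs of the two drift terms are handled correctly, and stopping the local martingale at $\tau\wedge\sigma_k$ is legitimate because $\sigma_k$ reduces both solutions. The gap is exactly where you suspect it, and the repair you sketch does not work. Writing $E_x w(X_{\tau\wedge\sigma_k})=E_x[w(X_\tau);\tau\le\sigma_k]+E_x[w(X_{\sigma_k});\tau>\sigma_k]$, the first term is indeed $\le 0$, but the second is a \emph{restricted} expectation: it equals $E_x w(X_{\sigma_k})-E_x[w(X_{\sigma_k});\tau\le\sigma_k]$, and the subtracted piece has no sign, so condition (b) applied to $u_1$ and $u_2$ separately, which controls only the full expectation $E_x w(X_{\sigma_k})\rightarrow R\nu_c(x)$, gives you nothing about it. What you actually need is $\limsup_k E_x w^+(X_{\sigma_k})\le 0$, and the positive part does not commute with the limit in (b). Moreover, the uniform integrability of $\{u_i(X_{\sigma_k})\}_k$ you invoke is not inherited from the reducing property: that property gives $E_x\sup_{t\le\sigma_k}|u_i(X_t)|<\infty$ for each fixed $k$, not uniformly in $k$, and the family $\{u_i(X_{\sigma_k})\}_k$ in general fails to be uniformly integrable precisely when $(\mu_i)_c\ne 0$ (for the Green potential $u=R\delta_{x_0}$ of Brownian motion killed on exiting a ball one has $u(X_t)\rightarrow 0$ a.s.\ as $t\rightarrow\zeta$ while $E_xu(X_{\sigma_k})=u(x)>0$). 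So the case your argument cannot reach is exactly the interesting one.

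The paper closes this differently. It applies the Tanaka--Meyer formula to $(u_1-u_2)^+$ directly, keeping the indicator $\mathbf{1}_{\{u_1>u_2\}}$ inside the time integrals over all of $[0,\tau_k]$ instead of stopping at the first passage; this yields $(u_1-u_2)^+(x)\le E_x(u_1-u_2)^+(X_{\tau_k})$ by the same sign considerations you use. The limit is then controlled by the pointwise domination
\[
(u_1-u_2)^+=\bigl(R(f^1_{u_1}\cdot m+\mu_1-\mu_2-f^2_{u_2}\cdot m)\bigr)^+
\le R\bigl(f^1_{u_1}\cdot m+\mu_1-\mu_2-f^2_{u_2}\cdot m\bigr)^+ ,
\]
whose right-hand side is the potential of a \emph{positive} measure with concentrated part $(\mu^1_c-\mu^2_c)^+=0$; Theorem \ref{th3.3} applied to that potential gives $\limsup_kE_x(u_1-u_2)^+(X_{\tau_k})\le R(\mu^1_c-\mu^2_c)^+(x)=0$. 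Some such domination of $w^+$ by a potential with vanishing concentrated part is the ingredient missing from your argument; once you add it, the first-passage time becomes superfluous and you are essentially back to the paper's proof.
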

\begin{dow}
Let $\{\tau_k\}$ be a common reducing sequence for $u_1$ and
$u_2$. We assume that $f^1$ satisfies (H1). By the Tanaka-Meyer
formula (see \cite[Theorem IV.66]{Protter}), for every $k\ge1$,
\begin{align*}
(u_1(x)-u_2(x))^+&\le E_x(u_1-u_2)^+(X_{\tau_k})
+E_x\int_0^{\tau_k}\mathbf{1}_{\{u_1>u_2\}}(X_r)
(f^1_{u_1}-f^2_{u_2})(X_r)\,dr\\
&\quad+E_x\int_0^{\tau_k}\mathbf{1}_{\{u_1>u_2\}}
(X_r)\,d(A^{\mu^1_d}_r-A^{\mu^2_d}_r)
\end{align*}
for q.e. $x\in E$. From the assumption $\mu_1\le\mu_2$ and
properties of the Revuz duality it follows that $dA^{\mu_d^1}\le
dA^{\mu^2_d},\, P_x$-a.s. for q.e. $x\in E$. By (H1) and the
assumptions on $f^1$ and $f^2$,
\[
\mathbf{1}_{\{u_1>u_2\}}(f^1_{u_1}-f^2_{u_2})
=\mathbf{1}_{\{u_1>u_2\}}(f^1_{u_1}-f^1_{u_2})
+\mathbf{1}_{\{u_1>u_2\}}(f^1_{u_2}-f^2_{u_2})\le 0.
\]
Hence
\[
(u_1(x)-u_2(x))^+\le E_x(u_1-u_2)^+(X_{\tau_k}),\quad k\ge 1
\]
for q.e. $x\in E$. But
\[
(u_1-u_2)^+=(R(f^1_{u_1}+\mu^1-\mu^2-f^2_{u_2}))^+
\le R(f^1_{u_1}+\mu^1-\mu^2-f^2_{u_2})^+.
\]
Therefore
\[
(u_1(x)-u_2(x))^+\le\limsup_{k\rightarrow\infty}
E_x(u_1-u_2)^+(X_{\tau_k})\le R(\mu^1_c-\mu^2_c)^+=0
\]
for q.e. $x\in E$, which proves the proposition.
\end{dow}

\begin{wn}
Under \mbox{\rm{(H1)}} there exists at most one solution of
\mbox{\rm{(\ref{eq4.1})}}.
\end{wn}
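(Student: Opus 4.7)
The corollary is an immediate consequence of Proposition \ref{stw4.1}, so the plan is a short symmetry argument. Suppose $u_1$ and $u_2$ are two solutions of (\ref{eq4.1}) corresponding to the same data $(f,\mu)$. I would set $f^1=f^2=f$ and $\mu^1=\mu^2=\mu$; then the trivial inequalities $\mu^1\le\mu^2$ and $f^1\le f^2$ hold, and $f^1=f$ satisfies (H1) by assumption. Proposition \ref{stw4.1} therefore applies and yields $u_1\le u_2$ q.e.

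Swapping the roles of $u_1$ and $u_2$ in the same argument gives $u_2\le u_1$ q.e. Combining the two inequalities, $u_1=u_2$ q.e., which is exactly the asserted uniqueness (solutions are defined only up to q.e.\ equality, so this is the sharpest statement one can make).

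There is no real obstacle here: the only point worth verifying is that Proposition \ref{stw4.1} may be applied with the nonstrict inequalities $\mu^1=\mu^2$ and $f^1=f^2$, but inspection of its statement shows it only requires $\le$, not $<$. Hence the proof is essentially one line once Proposition \ref{stw4.1} is available.
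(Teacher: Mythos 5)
Your argument is correct and is exactly the intended derivation: the paper states this corollary without proof as an immediate consequence of Proposition \ref{stw4.1}, and the symmetric application of that comparison result with $f^1=f^2=f$, $\mu^1=\mu^2=\mu$ yields $u_1\le u_2$ and $u_2\le u_1$ q.e., hence uniqueness up to q.e.\ equality.
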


\begin{stw}
\label{stw4.2} Let $u_1,u_2$ be solutions of
\mbox{\rm{(\ref{eq4.1})}} with $\mu_1\in\BM$ and $\mu_2\in\BM$,
respectively. If $f$ satisfies \mbox{\rm{(H1)}}, then
\[
R|f_{u_1}-f_{u_2}|(x)\le R|\mu_1-\mu_2|(x),\quad x\in E.
\]
\end{stw}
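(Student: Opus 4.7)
The plan follows the strategy of Proposition~\ref{stw4.1}: apply the Meyer-Tanaka formula (\cite[Theorem IV.66]{Protter}) to $|u_1-u_2|(X)$, take $E_x$ along a common reducing sequence, and exploit the monotonicity of $f$ in the form $\mathrm{sgn}(u_1-u_2)(f_{u_1}-f_{u_2})=-|f_{u_1}-f_{u_2}|$. By the preceding proposition, each probabilistic solution has the integral representation $u_i=R(f_{u_i}\cdot m+\mu_i)$; hence $u_1-u_2=R\nu$ with $\nu:=(f_{u_1}-f_{u_2})\cdot m+(\mu_1-\mu_2)\in\BM$, which in particular yields the pointwise domination $|u_1-u_2|\le R|\nu|$.

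As common reducing sequence I would take
\[
\tau_k=\inf\{t\ge 0:|u_1|(X_t)+|u_2|(X_t)+R|\nu|(X_t)\ge k\}\wedge k,
\]
which reduces $u_1$, $u_2$ and the excessive function $R|\nu|$ simultaneously. Writing out the semimartingale decomposition of Definition~\ref{def3.9}(a) for $Y_t:=(u_1-u_2)(X_t)$ and applying the Meyer-Tanaka formula to $|Y|$, then discarding the non-negative local-time and jump-correction terms from the left-hand side and using (H1), $|\mathrm{sgn}|\le 1$, and the Jordan decomposition $A^{(\mu_1-\mu_2)_d}=A^{(\mu_1-\mu_2)_d^+}-A^{(\mu_1-\mu_2)_d^-}$, one obtains
\[
|u_1-u_2|(x)+E_x\!\int_0^{\tau_k}\!|f_{u_1}-f_{u_2}|(X_r)\,dr\le E_x|u_1-u_2|(X_{\tau_k})+E_x\!\int_0^{\tau_k}\!dA^{|\mu_1-\mu_2|_d}_r.
\]

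Letting $k\to\infty$, monotone convergence and the Revuz formula handle the two integral terms, yielding $R|f_{u_1}-f_{u_2}|(x)$ on the left and $R|\mu_1-\mu_2|_d(x)$ on the right. The main obstacle is controlling $\limsup_k E_x|u_1-u_2|(X_{\tau_k})$: Definition~\ref{def3.9}(b) applied to the probabilistic solution $u_1-u_2$ of the linear equation with datum $\nu$ only supplies the signed limit $E_x(u_1-u_2)(X_{\tau_k})\to R\nu_c(x)$, which does not give an absolute-value bound. I would resolve this by invoking $|u_1-u_2|\le R|\nu|$ and applying Theorem~\ref{th3.3} to the positive measure $|\nu|\in\BM$, so that $E_xR|\nu|(X_{\tau_k})\to R|\nu|_c(x)$. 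Since $(f_{u_1}-f_{u_2})\cdot m$ is absolutely continuous with respect to $m$, it charges no exceptional set and is smooth; combined with the Jordan decomposition of $\nu$ this forces $|\nu|_c\le|\mu_1-\mu_2|_c$. Discarding the non-negative term $|u_1-u_2|(x)$ on the left then gives $R|f_{u_1}-f_{u_2}|(x)\le R|\mu_1-\mu_2|(x)$.
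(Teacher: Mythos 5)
Your argument is essentially the paper's own proof: a common reducing sequence, the Tanaka--Meyer formula combined with (H1) to produce $-|f_{u_1}-f_{u_2}|$ in the drift, the bound by $A^{|(\mu_1-\mu_2)_d|}$, and the domination $|u_1-u_2|\le R|\nu|$ with $\nu=(f_{u_1}-f_{u_2})\cdot m+(\mu_1-\mu_2)$ together with Theorem \ref{th3.3} to identify $\limsup_k E_x|u_1-u_2|(X_{\tau_k})$ with $R|(\mu_1-\mu_2)_c|(x)$ --- precisely the reasoning the paper imports from the end of the proof of Proposition \ref{stw4.1}. The only step you leave implicit is the passage from the resulting q.e.\ inequality to the stated inequality for \emph{every} $x\in E$, which follows since both sides are excessive functions, via \cite[Proposition II.3.2]{BG}.
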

\begin{dow}
Let $\{\tau_k\}$ be a common reducing sequence for $u_1$ and
$u_2$. By the Tanaka-Meyer formula,
\begin{align}
\label{eq4.2} |u_1(x)-u_2(x)|&\le
E_x|u_1-u_2|(X_{\tau_k})+E_x\int_0^{\tau_k}\mbox{sgn}(u_1-u_2)(X_r)
(f_{u_1}-f_{u_2})(X_r)\,dr \nonumber\\
&\quad+E_x\int_0^{\tau_k}\mbox{sgn}(u_1-u_2)
(X_r)\,d(A^{\mu^1_d}_r-A^{\mu^2_d}_r)
\end{align}
for q.e. $x\in E$. By (H1) the second term on the right-hand side
of (\ref{eq4.2}) is nonpositive. Therefore from (\ref{eq4.2}) it
follows that
\[
E_x\int_0^{\tau_k} |f_{u_1}-f_{u_2}|(X_r)\,dr\le
E_x|u_1-u_2|(X_{\tau_k})
+E_x\int_0^{\tau_k}\,dA^{|\mu^1_d-\mu^2_d|}_r
\]
for q.e $x\in E$. Letting $k\rightarrow \infty$ we get
\[
E_x\int_0^{\zeta} |f_{u_1}-f_{u_2}|(X_r)\,dr\le
R|\mu^1_c-\mu^2_c|(x) +R|\mu^1_d-\mu^2_d|(x)=R|\mu_1-\mu_2|(x)
\]
for q.e. $x\in E$ (see the reasoning at the end of the proof of
Proposition \ref{stw4.1}). From this and \cite[Proposition
II.3.2]{BG} we get the desired result.
\end{dow}

\begin{stw}
\label{stw4.3} Let $u$ be a solution of \mbox{\rm{(\ref{eq4.1})}}
with  $f$ satisfying \mbox{\rm{(H1), (H3)}}. Then
\[
R|f_u|(x)\le 2R|f(\cdot,0)|(x)+R|\mu|(x),\quad x\in E.
\]
\end{stw}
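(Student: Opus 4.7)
The plan is to derive the claimed estimate from Proposition \ref{stw4.2} by comparing $u$ with the identically zero function, viewed as a solution of (\ref{eq4.1}) for a suitably chosen auxiliary measure.

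First, I would observe that hypothesis (H3) gives $f(\cdot,0) \cdot m \in \BM$, hence the signed measure $\nu := -f(\cdot,0) \cdot m$ also belongs to $\BM$. The constant function $u_2 \equiv 0$ is then a solution of $-Au_2 = f(x,u_2) + \nu$: the requirement $f(\cdot, u_2)\cdot m \in \BM$ holds because $f(\cdot, 0) \cdot m \in \BM$ by (H3), and $u_2 \equiv 0$ must solve the linear equation (\ref{eq3.1}) with right-hand side $f(\cdot,0)\cdot m + \nu$, which cancels to the zero measure. Definition \ref{df3.1} is then satisfied trivially.

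Next, I would apply Proposition \ref{stw4.2} to the pair $u_1 = u$ (with $\mu_1 = \mu$) and $u_2 \equiv 0$ (with $\mu_2 = \nu$). Since $f$ satisfies (H1), the proposition yields
\[
R|f_u - f(\cdot,0)|(x) \le R|\mu - \nu|(x) = R|\mu + f(\cdot,0)\cdot m|(x), \quad x \in E.
\]
Combining this with the pointwise inequality $|f_u| \le |f_u - f(\cdot,0)| + |f(\cdot,0)|$ and with the measure-level subadditivity $|\mu + f(\cdot,0)\cdot m| \le |\mu| + |f(\cdot,0)|\cdot m$ (so that $R|\mu + f(\cdot,0)\cdot m| \le R|\mu| + R|f(\cdot,0)|$ by the positivity of $r$), I get
\[
R|f_u|(x) \le R|f_u - f(\cdot,0)|(x) + R|f(\cdot,0)|(x) \le R|\mu|(x) + 2R|f(\cdot,0)|(x),
\]
which is the asserted estimate.

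No real obstacle arises: the only substantive point is checking that the zero function genuinely qualifies as a solution of (\ref{eq4.1}) for the auxiliary data $(f, \nu)$, which is immediate once one notices that the effective right-hand side cancels to the zero measure; after that, the proof reduces to a single application of Proposition \ref{stw4.2} followed by the triangle inequality.
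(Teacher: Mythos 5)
Your proposal is correct and coincides with the paper's own proof, which is exactly the one-line application of Proposition \ref{stw4.2} to $u_1=u$, $u_2=0$, $\mu_1=\mu$, $\mu_2=-f(\cdot,0)\cdot m$; you have merely spelled out the verification that $0$ solves the auxiliary equation and the final triangle-inequality step, both of which the paper leaves implicit.
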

\begin{dow}
We apply Proposition \ref{stw4.2} to $u_1=u, u_2=0, \mu_1=\mu,
\mu_2=-f(\cdot,0)$.
\end{dow}
\medskip

Given a positive function $\rho\in \mathcal{S}$, we denote by
$\MM_\rho$ the set of all  measures $\mu\in\MM$ such that
$\|\mu\|_\rho<\infty$, where $\|\mu\|_\rho=\|\rho\cdot
\mu\|_{TV}$.

Important examples of positive $\rho\in\mathcal{S}$ are $\rho=1$
and $\rho=R\eta$, where $\eta$ is a positive  Borel function on
$E$. Let us also note that if $A=\Delta^\alpha$ (with
$\alpha\in(0,1]$) on an open bounded set $D\subset\BR^d$ (see
Remark \ref{uw.lap}) then for $\rho=R1$ we have
$\MM_\rho=\{\mu\in\MM:\delta^{\alpha}\cdot\mu\in\MM_b\}$, where
$\delta(x)=\mbox{dist}(x,\partial D)$, because by \cite{Kulczycki}
there exists $c,C>0$ such that
\[
c\delta^\alpha(x)\le R1(x)\le C\delta^\alpha(x),\quad x\in D.
\]

In the rest of the paper we assume that $\rho\in \mathcal{S}$ and
$\rho$ is strictly positive.

\begin{lm}
\label{lm4.1} Assume that $\mu,\nu\in \MM_\rho$ and $ R\mu(x)\le
R\nu(x)$ for $x\in E$. Then $\|\mu\|_{\rho}\le \|\nu\|_{\rho}$.
\end{lm}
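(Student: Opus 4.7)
The plan is to approximate the excessive function $\rho$ from below by potentials of nonneg functions, and then exploit the symmetry $r(x,y)=r(y,x)$ to convert integrals against $\rho$ into integrals against $R\mu$ and $R\nu$, where the hypothesis $R\mu\le R\nu$ can be invoked directly. The starting point is the standard fact that $\alpha R_\alpha\rho\nearrow\rho$ as $\alpha\to\infty$, together with the observation that $g_\alpha:=\alpha(\rho-\alpha R_\alpha\rho)\ge 0$, which is immediate from excessivity of $\rho$.

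The central step is to verify that $\alpha R_\alpha\rho=Rg_\alpha$ pointwise on $E$, i.e.\ that each $\alpha$-regularization of $\rho$ is itself the potential of a nonneg function. Formally this follows from the resolvent identity $\alpha RR_\alpha=R-R_\alpha$ via $Rg_\alpha=\alpha R\rho-\alpha(R-R_\alpha)\rho=\alpha R_\alpha\rho$, but that calculation encounters $\infty-\infty$ whenever $R\rho$ is not finite (as already happens for $\rho\equiv 1$). To bypass this I would use the semigroup representation
\[
\rho-\alpha R_\alpha\rho=\int_0^\infty\alpha e^{-\alpha t}(\rho-T_t\rho)\,dt
\]
together with $R(\rho-T_t\rho)=\int_0^t T_s\rho\,ds$, both of which hold with nonneg integrands for excessive $\rho$; a pair of Fubini interchanges on nonneg functions then produces $Rg_\alpha=\alpha R_\alpha\rho$ without any cancellation of infinities.

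Once the potential representation is in hand the rest is routine. By Fubini and the symmetry of $r$, one has $\int_E Rg_\alpha\,d\sigma=\int_E R\sigma\cdot g_\alpha\,dm$ for every positive measure $\sigma$, so applying this with $\sigma=\mu$ and $\sigma=\nu$ and using $R\mu\le R\nu$ together with $g_\alpha\ge 0$ yields $\int_E\alpha R_\alpha\rho\,d\mu\le\int_E\alpha R_\alpha\rho\,d\nu$. Letting $\alpha\to\infty$ and invoking monotone convergence (valid since $\alpha R_\alpha\rho\nearrow\rho$ and both limiting integrals are finite by $\mu,\nu\in\MM_\rho$) gives $\int_E\rho\,d\mu\le\int_E\rho\,d\nu$, which is the desired inequality $\|\mu\|_\rho\le\|\nu\|_\rho$. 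The main obstacle is thus the potential representation of $\alpha R_\alpha\rho$ in the case where $R\rho$ is infinite; everything else is standard bookkeeping with Fubini and monotone convergence.
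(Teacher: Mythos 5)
There is a genuine gap at the central step. The identity $Rg_\alpha=\alpha R_\alpha\rho$ is false in general: the semigroup computation you outline actually gives
\[
R(\rho-T_t\rho)=\int_0^t T_s\rho\,ds-t\,\rho_\infty,\qquad
\rho_\infty:=\lim_{s\to\infty}T_s\rho ,
\]
because $\int_0^N(T_s\rho-T_{s+t}\rho)\,ds=\int_0^tT_s\rho\,ds-\int_N^{N+t}T_s\rho\,ds$ and the last term tends to $t\rho_\infty$, not to $0$. Feeding this into your Laplace-transform step yields $Rg_\alpha=\alpha R_\alpha\rho-\rho_\infty$. The invariant part $\rho_\infty$ need not vanish for a transient form: take Brownian motion on $\BR^3$ (transient and conservative) and $\rho\equiv 1$, which is explicitly one of the paper's ``important examples'' of admissible weights. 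Then $\alpha R_\alpha 1=1$, so $g_\alpha=\alpha(1-\alpha R_\alpha 1)\equiv 0$ and $Rg_\alpha=0\neq\alpha R_\alpha\rho$. Consequently your argument only proves $\int_E(\rho-\rho_\infty)\,d\mu\le\int_E(\rho-\rho_\infty)\,d\nu$, and the missing contribution of the invariant part $\rho_\infty$ is exactly the nontrivial case.

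The repair is the route the paper takes: it invokes \cite[Proposition II.2.6]{BG}, which asserts that \emph{every} excessive function of a transient process is the increasing limit of potentials $Rh_n$ of bounded positive Borel functions. That result is proved not by the resolvent regularization you use (which only recovers the purely excessive part) but via r\'eduites/hitting probabilities of an exhausting sequence of sets, and it is precisely what absorbs the invariant part. Once you replace your $g_\alpha$ by such a sequence $h_n$, the remainder of your argument --- the symmetry/Fubini identity $(Rh,\sigma)=(h,R\sigma)$, the comparison using $R\mu\le R\nu$ and $h_n\ge 0$, and monotone convergence --- is exactly the paper's proof.
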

\begin{dow}
By \cite[Proposition II.2.6]{BG}  there exists a sequence
$\{h_n\}$ of positive bounded Borel functions on $E$ such that
$Rh_n\nearrow\rho$. For  $n\ge$ we have
\[
(\mu,Rh_n)=(R\mu,h_n)\le (R\nu,h_n)=(\nu,Rh_n),
\]
so letting $n\rightarrow \infty$ we get the desired result.
\end{dow}

\begin{stw}
\label{stw4.4} Let $u_1,u_2$ be solutions of
\mbox{\rm{(\ref{eq4.1})}}  with $\mu_1\in\MM_{\rho}$ and
$\mu_2\in\MM_\rho$, respectively. If  $f$ satisfies
\mbox{\rm{(H1)}} then
\[
\|f_{u_1}-f_{u_2}\|_{L^1(E;\rho\cdot m)}\le \|\mu_1-\mu_2\|_{\rho}.
\]
\end{stw}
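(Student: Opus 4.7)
The plan is to derive this weighted $L^1$ estimate as a direct corollary of Proposition~\ref{stw4.2} (which supplies the pointwise potential bound) combined with Lemma~\ref{lm4.1} (which upgrades pointwise potential inequalities to weighted $L^1$ inequalities). The whole argument reduces to identifying the right two positive measures and verifying that the hypotheses of Lemma~\ref{lm4.1} are met.

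First, I would introduce the positive Borel measures
\[
\eta := |f_{u_1}-f_{u_2}|\cdot m, \qquad \nu := |\mu_1 - \mu_2|.
\]
By the very definition of $\|\cdot\|_\rho$ and the fact that $\eta$ and $\nu$ are positive,
\[
\|f_{u_1}-f_{u_2}\|_{L^1(E;\rho\cdot m)} = \int_E \rho\, d\eta = \|\eta\|_\rho,
\qquad
\|\mu_1-\mu_2\|_\rho = \int_E \rho\, d\nu = \|\nu\|_\rho.
\]
Since $\nu \le |\mu_1| + |\mu_2|$ and $\mu_1,\mu_2 \in \MM_\rho$, we have $\|\nu\|_\rho \le \|\mu_1\|_\rho + \|\mu_2\|_\rho < \infty$, so $\nu \in \MM_\rho$.

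Next, I would invoke Proposition~\ref{stw4.2} applied to $u_1,u_2$ under hypothesis (H1) to obtain the pointwise inequality
\[
R\eta(x) = R|f_{u_1}-f_{u_2}|(x) \le R|\mu_1-\mu_2|(x) = R\nu(x),\quad x\in E.
\]
Then Lemma~\ref{lm4.1} applied to $(\eta,\nu)$ gives $\|\eta\|_\rho \le \|\nu\|_\rho$, which is exactly the conclusion.

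The only small obstacle is that Lemma~\ref{lm4.1} is stated assuming both measures lie in $\MM_\rho$, whereas $\eta \in \MM_\rho$ is precisely what we are trying to establish. However, a glance at the proof of Lemma~\ref{lm4.1} shows that only $\nu \in \MM_\rho$ is actually used: choosing positive bounded Borel functions $\{h_n\}$ with $Rh_n \nearrow \rho$ (as in \cite[Proposition II.2.6]{BG}), Fubini's theorem and the symmetry of $r$ give $(\eta,Rh_n) = (R\eta,h_n) \le (R\nu,h_n) = (\nu,Rh_n)$, and monotone convergence on both sides yields $\|\eta\|_\rho \le \|\nu\|_\rho$, forcing $\eta \in \MM_\rho$ as a byproduct. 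Hence no separate a priori integrability check is needed, and the proof is complete.
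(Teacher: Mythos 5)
Your proof is correct and follows essentially the same route as the paper, whose entire argument is ``Follows from Proposition \ref{stw4.2} and Lemma \ref{lm4.1}''; you simply make the two-step deduction explicit. Your observation that Lemma \ref{lm4.1} only needs $\nu\in\MM_\rho$ (with $\|\eta\|_\rho<\infty$ emerging from monotone convergence) is a sensible and accurate resolution of the one hypothesis the paper glosses over.
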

\begin{dow}
Follows from Proposition \ref{stw4.2} and Lemma \ref{lm4.1}.
\end{dow}

\begin{stw}
\label{stw4.5} Let $u$ be a solution of \mbox{\rm{(\ref{eq4.1})}}
with  $\mu\in\MM_\rho$ and $f$ satisfying  \mbox{\rm{(H1)}} and
such that $f(\cdot,0)\in L^1(E;\rho\cdot m)$. Then
\[
\|f_u\|_{L^1(E;\rho\cdot m)}
\le 2\|f(\cdot,0)\|_{L^1(E;\rho\cdot m)}+\|\mu\|_{\rho}.
\]
\end{stw}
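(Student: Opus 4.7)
The plan is to reduce this weighted $L^1$-estimate to Proposition \ref{stw4.4} by comparing the given solution $u$ with the trivial one. This mirrors exactly the way Proposition \ref{stw4.3} was deduced from Proposition \ref{stw4.2}, but now with the $\rho$-weighted norm in place of the pointwise potential estimate.

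First I observe that $u_2\equiv 0$ is a solution of \eqref{eq4.1} with data $\mu_2:=-f(\cdot,0)\cdot m$, since then the right-hand side $f(\cdot,u_2)+\mu_2$ is the zero measure and Definition \ref{df3.1} is satisfied trivially. The hypothesis $f(\cdot,0)\in L^1(E;\rho\cdot m)$ says precisely that $\mu_2\in\MM_\rho$, and $\mu_1:=\mu\in\MM_\rho$ is given. Hypothesis (H1) is in force, so Proposition \ref{stw4.4} applies to the pair $(u_1,u_2)=(u,0)$ with source data $(\mu_1,\mu_2)$ and yields
\[
\|f_u-f(\cdot,0)\|_{L^1(E;\rho\cdot m)}\le \|\mu+f(\cdot,0)\cdot m\|_{\rho}.
\]
Applying the triangle inequality on the right for $\|\cdot\|_{\rho}$ and then on the left for the weighted $L^1$-norm gives
\[
\|f_u\|_{L^1(E;\rho\cdot m)}\le \|f_u-f(\cdot,0)\|_{L^1(E;\rho\cdot m)} + \|f(\cdot,0)\|_{L^1(E;\rho\cdot m)} \le \|\mu\|_{\rho}+2\|f(\cdot,0)\|_{L^1(E;\rho\cdot m)},
\]
which is the claimed bound.

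I do not expect any genuine obstacle: the argument is a one-line corollary once Proposition \ref{stw4.4} is in hand. The only minor point to verify is that $f(\cdot,0)\cdot m$ actually belongs to $\BM$, so that it is a legitimate source in \eqref{eq4.1}; since $\rho$ is strictly positive and excessive, the $\rho$-weighted integrability of $f(\cdot,0)$ gives $R(|f(\cdot,0)|\cdot m)<\infty$ q.e. via the nest-exhaustion technique already used in the proof of Proposition \ref{stw3.1}, so $f(\cdot,0)\cdot m\in\MM_\rho\subset\BM$ as required.
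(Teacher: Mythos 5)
Your proof is correct and follows essentially the same route as the paper: the paper deduces the statement from Proposition \ref{stw4.3} (which is Proposition \ref{stw4.2} applied to the pair $(u,0)$ with data $(\mu,-f(\cdot,0)\cdot m)$) together with Lemma \ref{lm4.1}, while you apply Proposition \ref{stw4.4} (itself Proposition \ref{stw4.2} plus Lemma \ref{lm4.1}) to the same comparison pair, so the two arguments use identical ingredients in a slightly different order. Your closing remark that $f(\cdot,0)\cdot m\in\MM_\rho\subset\BM$ is a legitimate point that the paper leaves implicit.
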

\begin{dow}
Follows from Proposition \ref{stw4.3} and Lemma \ref{lm4.1}.
\end{dow}

\begin{tw}
Let $u$ be a solution of \mbox{\rm{(\ref{eq4.1})}} with
$\mu\in\MM_b$ and $f$ satisfying \mbox{\rm{(H1)}} and such that
$f(\cdot,0)\in L^1(E;m)$. Then for every $k\ge0$, $T_k(u)\in
D_e[\EE]$ and
\[
\EE(T_k(u),T_k(u))\le 2k(\|f(\cdot,0)\|_{L^1}+\|\mu\|_{TV}).
\]
\end{tw}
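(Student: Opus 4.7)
\begin{dow}
The plan is to reduce to the linear estimate in Theorem~\ref{th3.1}. By definition of a solution of (\ref{eq4.1}), the function $u$ is a solution of the linear problem
\[
-Aw=\nu,\quad \nu:=f_u\cdot m+\mu,
\]
in the sense of Definition \ref{df3.1}. Thus it suffices to prove that $\nu\in\MM_b$ and bound $\|\nu\|_{TV}$ in terms of the given data.

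First I would invoke Proposition \ref{stw4.5} with $\rho\equiv 1$ (which is a strictly positive excessive function and gives $\MM_\rho=\MM_b$, $\|\cdot\|_\rho=\|\cdot\|_{TV}$), so that hypotheses (H1) and $f(\cdot,0)\in L^1(E;m)$ yield
\[
\|f_u\|_{L^1(E;m)}\le 2\|f(\cdot,0)\|_{L^1(E;m)}+\|\mu\|_{TV}.
\]
In particular $f_u\cdot m\in\MM_b$, and by the triangle inequality
\[
\|\nu\|_{TV}\le\|f_u\|_{L^1(E;m)}+\|\mu\|_{TV}\le 2\|f(\cdot,0)\|_{L^1(E;m)}+2\|\mu\|_{TV}.
\]

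Applying Theorem \ref{th3.1} to $u$ viewed as a solution of $-Au=\nu$ (which is legal since $\nu\in\MM_b\subset\BM$ by Proposition \ref{stw3.1}) gives $T_k(u)\in D_e[\EE]$ and
\[
\EE(T_k(u),T_k(u))\le k\|\nu\|_{TV}\le 2k\bigl(\|f(\cdot,0)\|_{L^1(E;m)}+\|\mu\|_{TV}\bigr),
\]
which is the required estimate. I do not expect any serious obstacle: both ingredients (the $L^1$ a priori bound on $f_u$ and the Stampacchia-type truncation estimate for linear equations) have already been established, and the only delicate point is to notice that the weighted statement of Proposition \ref{stw4.5} specialises, via $\rho\equiv 1$, to the unweighted $L^1$ bound needed here.
\end{dow}
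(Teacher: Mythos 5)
Your proof is correct and follows exactly the route the paper takes: the paper's own proof is the one-line citation ``Follows from Theorem \ref{th3.1} and Proposition \ref{stw4.5}'', and your argument simply fills in those details (specialising Proposition \ref{stw4.5} to $\rho\equiv 1$ and applying the linear truncation estimate to $-Au=f_u\cdot m+\mu$). The bookkeeping of the constants is also right.
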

\begin{dow}
Follows from Theorem \ref{th3.1} and Proposition \ref{stw4.5}.
\end{dow}

\subsection{Stampacchia's definition by duality}

In \cite{Stampacchia} Stampacchia introduced a definition of a
solution of (\ref{eq3.1}) in case $\mu\in\MM_b$ and $A$ is
uniformly elliptic operator of the form
\[
A=\sum^{d}_{i,j=1}\frac{\partial}{\partial
x_j}(a_{ij}\frac{\partial}{\partial x_i})
\]
on a bounded open set $D\subset \BR^d$. According to this
definition, now called Stampacchia's definition by duality, a
measurable function $u\in L^1(D;m)$, where $m$ is the Lebesgue
measure on $\BR^d$, is a solution of (\ref{eq3.1}) if
\[
(u,\eta)=(G\eta,\mu),\quad \eta\in L^{\infty}(D;m).
\]
The above definition  has  sense, because it is well known that
for $A$ as above $G\eta$ has a bounded continuous version. In the
general case considered in the paper the original Stampacchia's
definition has to be modified, because the measure $\mu$ is not
assumed to be bounded, $G\eta$ may be not continuous for $\eta\in
L^\infty(E;m)$ and moreover, the solution of (\ref{eq3.1}) may be
not locally integrable (see \cite[Example 5.7]{KR:JFA}). In
\cite{KR:JFA} we introduced a generalized Stampacchia's definition
for solutions of (\ref{eq4.1}) with Dirichlet operator $A$ and
bounded measure $\mu$ such that $\mu\ll$Cap. Here we give a
definition for general measures of the class $\mathbb{M}$.

\begin{lm}
\label{lm.m} We have $ \mathbb{M}=\bigcup\mathcal{M}_\rho$, where
the union is taken over all strictly positive excessive bounded
functions.
\end{lm}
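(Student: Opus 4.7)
The inclusion $\bigcup_\rho \MM_\rho \subseteq \BM$ is tautological from the definitions. The task therefore reduces to showing that, given $\mu \in \BM$, there exists a bounded, strictly positive, excessive $\rho$ with $\int_E \rho\, d|\mu| < \infty$. The plan is to take $\rho = R\phi$ for a judiciously chosen nonnegative Borel function $\phi$, since every such potential is automatically excessive and boundedness and integrability will come from carefully tuning $\phi$.

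To build $\phi$, I will combine three ingredients. First, transience supplies a strictly positive Borel $f_0$ with $Rf_0 < \infty$ q.e., while $\sigma$-finiteness of the Radon measure $m$ supplies a strictly positive Borel $h$ with $\int h\, dm < \infty$; setting $f := f_0 \wedge h$ retains both properties simultaneously. Second, by the argument already used in the proof of Proposition \ref{stw3.1}, $f \cdot m$ is smooth, so \cite[Theorem 2.2.4]{Fukushima} provides an increasing nest $\{F_n\}$ with $\bigcup F_n = E$ q.e.\ and $c_n := \sup_{x\in E} R(\mathbf{1}_{F_n} f)(x) < \infty$. Third, since $\mu \in \BM$, the Borel sets $G_k := \{R|\mu| \le k\}$ exhaust $E$ q.e.

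Next I will define
\[
\phi = \sum_{n,k \ge 1} 2^{-n-k} a_{n,k}\, \mathbf{1}_{F_n \cap G_k}\, f,
\qquad a_{n,k} = \Bigl(1 + c_n + k \int_{F_n} f\, dm\Bigr)^{-1},
\]
and $\rho := R\phi$. Each $a_{n,k} \in (0,\infty)$ by the previous step. Termwise $a_{n,k} R(\mathbf{1}_{F_n \cap G_k} f) \le a_{n,k} c_n \le 1$, so $R\phi \le \sum 2^{-n-k} = 1$ pointwise, hence $\rho$ is bounded. Using $r(x,y)=r(y,x)$ and Fubini,
\[
\int_E R\phi\, d|\mu| = \int_E \phi \cdot R|\mu|\, dm \le \sum_{n,k \ge 1} 2^{-n-k} a_{n,k}\, k \int_{F_n} f\, dm \le 1,
\]
thanks to $R|\mu| \le k$ on $G_k$. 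Since $f > 0$ everywhere and $\bigcup_{n,k} F_n \cap G_k = E$ q.e., $\phi > 0$ q.e., and because $m$ is a reference measure for $\BX$ this forces $\rho = R\phi > 0$ q.e.

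The delicate point is balancing three simultaneous constraints on $\phi$ — that $R\phi$ be bounded, that $R\phi$ be $|\mu|$-integrable, and that $\phi$ be strictly positive q.e.\ — in a single construction. The key leverage is the symmetry of the resolvent kernel, which transfers the $|\mu|$-integrability condition on $R\phi$ into a condition on $\phi$ weighted by the q.e.-finite function $R|\mu|$; once $R|\mu|$ is truncated on the $G_k$, the doubly indexed series with diagonal weights $2^{-n-k}$ and appropriately tuned $a_{n,k}$ discharges all three constraints at once.
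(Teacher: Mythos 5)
Your proof is correct and rests on the same two pillars as the paper's: take $\rho=R\phi$ (automatically excessive), get boundedness of $R\phi$ from transience, and transfer the $|\mu|$-integrability of $R\phi$ into an $m$-integrability condition on $\phi\cdot R|\mu|$ via the symmetry of the kernel $r$. The paper's execution is much shorter: since $R|\mu|<\infty$ $m$-a.e., one picks a strictly positive Borel $\eta$ with $(R|\mu|,\eta)<\infty$, a strictly positive Borel $g$ with $\|Rg\|_\infty<\infty$ (transience), and sets $\rho=R(\eta\wedge g)$; the minimum handles boundedness and integrability simultaneously, with no nest, no level sets $G_k$, and no series. Your doubly indexed sum is essentially an explicit unwinding of the abstract choice of $\eta$, and all the estimates in it check out. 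One small point to tighten: you conclude only that $\rho>0$ q.e., whereas the lemma's union runs over \emph{strictly positive} $\rho$, so positivity is needed at every point. This is not a real gap in your construction: $\phi>0$ off an exceptional set, exceptional sets are $m$-null, and since $R(x,E)=E_x\zeta>0$ for every $x$ while $R(x,dy)=r(x,y)\,m(dy)$, it follows that $R\phi(x)>0$ for \emph{every} $x$ --- the same observation that makes the paper's $R(\eta\wedge g)$ strictly positive. Add that sentence and the argument is complete.
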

\begin{dow}
It is clear that  $\bigcup \mathcal{M}_\rho\subset \mathbb{M}$. To
prove the opposite inclusion, let us assume that
$\mu\in\mathbb{M}$. Then $R|\mu|<\infty$, $m$-a.e. Therefore there
exists a strictly positive Borel function $\eta$ on $E$ such that
$(R|\mu|,\eta)=(|\mu|, R\eta)<\infty$. On the other hand, since
the form $(\EE,D[\EE])$ is transient, there exists a strictly
positive Borel function $g$ on $E$ such that
$\|Rg\|_\infty<\infty$ (see \cite[Corollary 1.3.6]{Oshima}). Let us put
$\rho=R(\eta\wedge g)$. It is clear that $\rho$ is a bounded
strictly positive excessive function.
\end{dow}

\begin{df}
\label{df4.11} We say that a measurable numerical function $u$ on
$E$ is a solution of (\ref{eq4.1}) in the sense of Stampacchia if
for every $\eta\in\BB(E)$ such that $(|\mu|,R|\eta|)<\infty$
 the integrals $(u,\eta)$,
$(f_u,R\eta)$ are finite and we have
\[
(u,\eta)=(f_u,R\eta)+(\mu,R\eta).
\]
\end{df}

\begin{stw}
Let $\mu\in \mathbb{M}$. A measurable function $u$ on $E$ is a
solution of \mbox{\rm{(\ref{eq4.1})}} in the sense of Definition
\ref{df4.11} if and only if it is a solution of
\mbox{\rm{(\ref{eq4.1})}} in the sense of Definition \ref{df3.1}.
\end{stw}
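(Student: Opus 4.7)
The plan is to reduce the proposition to an equivalence for the \emph{linear} problem. Setting $\nu:=f_u\cdot m+\mu$, Definition \ref{df3.1} (as lifted to the semilinear setting at the beginning of Section~4) says $f_u\cdot m\in\mathbb{M}$ and $u(x)=R\nu(x)$ for q.e.\ $x\in E$, while Definition \ref{df4.11} amounts to $(u,\eta)=(f_u,R\eta)+(\mu,R\eta)=(\nu,R\eta)$ for every $\eta\in\mathcal{B}(E)$ with $(|\mu|,R|\eta|)<\infty$ (with $(u,\eta)$ and $(f_u,R\eta)$ required to be finite). So the task reduces to showing that, for $\nu\in\mathbb{M}$, the pointwise q.e.\ identity $u=R\nu$ is equivalent to $(u,\eta)=(\nu,R\eta)$ on a sufficiently rich class of admissible $\eta$.

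For the direction Definition \ref{df3.1} $\Rightarrow$ Definition \ref{df4.11} I would multiply $u(x)=\int_E r(x,y)\,d\nu(y)$ by $\eta(x)$, integrate against $m$, and apply Fubini--Tonelli to $\nu^+$ and $\nu^-$ separately; symmetry $r(x,y)=r(y,x)$ then yields $(u,\eta)=(\nu,R\eta)$. Fubini is legal on the $\mu$-piece by the hypothesis $(|\mu|,R|\eta|)<\infty$, and on the $f_u$-piece after splitting $f_u=f_u^+-f_u^-$ by combining $f_u\cdot m\in\mathbb{M}$ (which gives $Rf_u^\pm<\infty$ q.e.) with the finiteness built into Definition \ref{df4.11}. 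The finiteness of $(u,\eta)$ then follows from $|u|\le R|\nu|$ q.e.\ and $(R|\nu|,|\eta|)=(|\nu|,R|\eta|)<\infty$.

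For the reverse direction I would set $w:=u-R\nu$ and repeat the Fubini computation on $R\nu$ to obtain $(R\nu,\eta)=(\nu,R\eta)$ for every admissible $\eta$; combined with the Stampacchia identity this gives $(w,\eta)=0$ on this class. To produce a rich family of admissible $\eta$ I would invoke Lemma \ref{lm.m} and pick a bounded strictly positive excessive $\rho$ with $\nu\in\mathcal{M}_\rho$, and then use \cite[Proposition II.2.6]{BG} to obtain positive bounded Borel $h_n$ with $Rh_n\nearrow\rho$. For any Borel $\eta$ with $|\eta|\le c\,h_n$ the estimate $(|\nu|,R|\eta|)\le c(|\nu|,\rho)=c\|\nu\|_\rho<\infty$ shows admissibility, and this class is rich enough to separate functions in $L^1(E;\rho\cdot m)$. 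Hence $w=0$ $m$-a.e., i.e.\ $u=R\nu$ $m$-a.e.

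The main obstacle I anticipate is upgrading this $m$-a.e.\ identity to the q.e.\ identity required by Definition \ref{df3.1}. I would handle it by using the canonical quasi-continuous representative of $R\nu$ provided by Theorems \ref{th3.2}--\ref{th3.3} applied to the decomposition $\nu=\nu_c+\nu_d$: two quasi-continuous functions that coincide $m$-a.e.\ coincide q.e., so once $u$ is identified with such a representative the $m$-a.e.\ equality forces the q.e.\ equality. Apart from this representative-matching step, the whole argument boils down to two applications of Fubini--Tonelli plus the density argument above.
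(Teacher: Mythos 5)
Your overall route is the paper's own: the forward direction is Fubini--Tonelli plus the symmetry $r(x,y)=r(y,x)$, and the reverse direction tests the Stampacchia identity against a separating family of admissible $\eta$ built from a bounded strictly positive excessive $\rho$ with $\mu\in\MM_\rho$ (the paper takes $\rho=Rg$ with $g$ strictly positive Borel and uses $\eta=g\mathbf{1}_B$, $B\in\BB(E)$, directly; your family $|\eta|\le c\,h_n$ with $Rh_n\nearrow\rho$ is the same device, though you should make sure the $h_n$ are strictly positive, or replace them by such a $g$, for the class to actually separate on all of $E$). There is, however, one genuine soft spot in your forward direction: you justify the finiteness of $(f_u,R\eta)$ by appealing to ``the finiteness built into Definition \ref{df4.11}'', which is precisely the conclusion to be established, and the observation that $Rf_u^{\pm}<\infty$ q.e.\ does not yield $(f_u^{\pm},R|\eta|)<\infty$ for an arbitrary $\eta$ admissible for $\mu$. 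The missing ingredient is the a priori comparison of Proposition \ref{stw4.3}, $R|f_u|\le 2R|f(\cdot,0)|+R|\mu|$, which together with $|u|\le R|f_u|+R|\mu|$ is exactly what the paper invokes: it converts admissibility of $\eta$ relative to $\mu$ into admissibility relative to $f_u\cdot m$, and so legitimizes both the Fubini step and the finiteness requirements of Definition \ref{df4.11}. Without some such domination of $R|f_u|$ by $R|\mu|$ the forward implication does not close.

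On the reverse direction you are, if anything, more cautious than the paper: the paper stops at $u=Rf_u+R\mu$ $m$-a.e.\ and declares the proof complete, whereas you try to upgrade to the q.e.\ identity demanded by Definition \ref{df3.1}. Note, though, that your proposed fix (two quasi-continuous functions equal $m$-a.e.\ are equal q.e.) presupposes that $u$ itself is quasi-continuous, which Definition \ref{df4.11} does not provide; a Stampacchia solution is determined only up to $m$-null sets, so the reverse implication can only hold after modifying $u$ on such a set. This is a feature of the statement that the paper silently accepts rather than a defect of your argument, but as written your ``representative-matching'' step does not actually close that gap.
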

\begin{dow}
Let $u$ be a solution of (\ref{eq4.1}) in the sense od Definition
\ref{df3.1}. Then by Proposition \ref{stw4.3},  $|u|+R|f_u|\le
R|\mu|$, it is clear that $u$ is a solution of (\ref{eq4.1}) in
the sense of Stampacchia. Now assume that $u$ is a solution of
(\ref{eq4.1}) in the sense of Stampacchia. By Lemma \ref{lm.m}
there exists a strictly positive $\rho\in\mathcal{S}$ such that
$\mu\in\mathcal{M_\rho}$. In fact, from the proof of Lemma
\ref{lm.m} it follows that we may take $\rho=Rg$ for some strictly
positive Borel function $g$ on $E$. We have
\[
(u,g\mathbf{1}_B)=(Rf_u,g\mathbf{1}_B)+(R\mu,g\mathbf{1}_B)
\]
for every $B\in\BB(E)$. Hence $u=Rf_u+R\mu$, $m$-a.e., and the
proof is complete.
\end{dow}

\begin{uw}
\label{uw.lap} Let $\alpha\in (0,1]$ and let $D$ be an open subset
of $\BR^d$. Denote by $(\EE,D[\EE])$ the Dirichlet form associated
with the operator $\Delta^\alpha$ on $\BR^d$ (see \cite[Example
1.4.1]{Fukushima}), and by $(\EE_D,D[\EE_D])$ the part of
$(\EE,D[\EE])$ on $D$ (see \cite[Section 4.4]{Fukushima}). By $A$
denote the operator associated with $(\EE_D,D[\EE_D])$, i.e. the
fractional Laplacian $\Delta^\alpha$ on $D$ with zero boundary
condition. If $\mu\in\MM^\alpha_\delta$ then in Definition
\ref{df4.11} one can take any function $\eta\in\BB_b(E)$ as a test
function. It follows in particular that in case of equations
involving operator $A$ Stampacchia's definition is equivalent to
the one introduced in \cite[Definition 1.1]{CV}.
\end{uw}

\begin{uw}
In \cite{KR:NDEA} renormalized solutions of (\ref{eq4.1}) are
defined in case  $\mu$ is a bounded smooth measure. It is also
proved there that $u$ is a renormalized solution of (\ref{eq4.1}) if
and only it is a probabilistic solution. Thus, in case $\mu$ is
smooth, all the definitions (renormalized, Stampacchia's by
duality, probabilistic, via the resolvent kernel) are equivalent.
\end{uw}

\begin{uw}
In case $A$ is the Laplace operator on an open bounded set
$D\subset\BR^d$, also the so-called weak solutions of
(\ref{eq4.1}) are considered in the literature (see, e.g.,
\cite{BMP}). A weak solution of (\ref{eq4.1}) is a function $u\in
L^1(D;dx)$ such that $f_u\in L^1(D;dx)$ and for every $\eta\in
C_0^\infty(\overline{D})$,
\[
-\int_D u\Delta\eta\,dx=\int_D f_u\eta\,dx+\int_D\eta\,d\mu.
\]
It is clear that the definition of weak solution  is equivalent to
Stampacchia's definition by duality. It is  worth pointing out
that in fact the  concept of weak solutions is also due to
Stampacchia (see \cite[Definition 9.1]{Stampacchia}).
\end{uw}

\subsection{Existence of solutions}

In \cite{KR:JFA} (see also \cite{KR:CM} for the case of operator
corresponding to general nonsymmetric quasi-regular form) it is
proved that if $\mu$ is smooth then under conditions (H1)--(H3)
there exists a solution of (\ref{eq4.1}). It is well known that if
$A=\Delta$ and $\mu$ is not smooth, i.e. $\mu_c\neq 0$, then in
general assumptions (H1)--(H3) are not sufficient for the
existence of a solution of (\ref{eq4.1}). In this section we give
an existence result for (\ref{eq4.1}) under the following
additional hypothesis:
\begin{enumerate}
\item[(H4)] there exists a positive Borel measurable function
$g$ on $E$ such that $g\cdot m\in\BM$ and $|f(x,y)|\le g(x)$,
$x\in E,y\in\BR$.
\end{enumerate}

Let us observe that (H4) implies (H2), (H3). In the paper we have
assumed Meyer's hypothesis (L), so we may also drop (H1).

Hypothesis (H4) imposes rather restrictive assumption on the
growth of $f$ but allows us to prove the existence of solutions
for any $\mu\in\BM$ and any Dirichlet operator $A$.

\begin{tw}
\label{tw4.2} Assume \mbox{\rm{(H4)}}. Then there exists a
solution of  \mbox{\rm{(\ref{eq4.1})}}.
\end{tw}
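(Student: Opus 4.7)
The strategy is to apply Schauder's fixed point theorem to the nonlinear operator
\[
\Phi(u)(x) := R\mu(x) + \int_E r(x,y)\,f(y,u(y))\,m(dy),
\]
whose fixed points are, by Definition 4.1 combined with Definition 3.1, precisely the solutions of (\ref{eq4.1}). Setting $h := R|\mu| + Rg$, hypothesis (H4) together with $\mu \in \BM$ makes $h$ finite q.e., and $|f(\cdot,u)| \le g$ forces $|\Phi(u)| \le h$, so $\Phi$ preserves the set $K := \{u : |u| \le h \text{ $m$-a.e.}\}$. Applying Lemma \ref{lm.m} to the measure $|\mu| + g\cdot m \in \BM$ produces a strictly positive bounded excessive weight $\rho$ for which $K$ is a bounded, convex, closed subset of $L^1(E;\rho\cdot m)$ (adjusting $\rho$ further, again via Lemma \ref{lm.m}, to also ensure $h \in L^1(E;\rho\cdot m)$).

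The continuity of $\Phi$ on $K$ in the $L^1(\rho\cdot m)$-topology is routine. Any $L^1(\rho\cdot m)$-convergent sequence has an $m$-a.e.\ convergent subsequence; the continuity of $y \mapsto f(x,y)$ then gives $f(\cdot,u_n) \to f(\cdot,u)$ $m$-a.e. Two applications of dominated convergence follow: first inside the integral defining $\Phi(u_n)(x)$, using the common bound $g$ and the finiteness of $Rg(x)$ q.e., and then outside, using the bound $h \in L^1(\rho\cdot m)$. This yields $\Phi(u_n) \to \Phi(u)$ in $L^1(\rho\cdot m)$.

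The decisive step is relative compactness of $\Phi(K)$ in $L^1(\rho\cdot m)$. Given $\{u_n\} \subset K$, the nonlinearities $\psi_n := f(\cdot,u_n)$ satisfy $|\psi_n| \le g$. Since $E$ is locally compact separable and $g\cdot m$ is a Radon measure, $L^1(E;g\cdot m)$ is separable; hence by Banach--Alaoglu there exists a subsequence $\psi_{n_k}/g \to \varphi$ weakly-$*$ in $L^\infty(E;g\cdot m)$. Setting $\psi_\infty := g\varphi$, for every $x$ with $Rg(x) < \infty$, i.e.\ q.e., the function $y\mapsto r(x,y)$ lies in $L^1(E;g\cdot m)$, and the weak-$*$ convergence yields $R\psi_{n_k}(x) \to R\psi_\infty(x)$ pointwise q.e. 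Domination by $h \in L^1(\rho\cdot m)$ and a final dominated convergence upgrade this to $\Phi(u_{n_k}) \to R\mu + R\psi_\infty$ in $L^1(\rho\cdot m)$. Thus $\Phi(K)$ is relatively compact.

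With invariance, continuity, and relative compactness in hand, Schauder's fixed point theorem produces $u \in K$ with $\Phi(u) = u$, which is the required solution. The principal obstacle is the compactness step: having dropped (H1), iteration and monotonicity arguments are unavailable, and with $f$ merely continuous (not Lipschitz) Banach's contraction principle does not apply either. Compactness is recovered solely from the smoothing action of the resolvent kernel $r$---available thanks to Meyer's hypothesis (L)---which converts weak-$*$ precompactness of the bounded nonlinearities $\{f(\cdot,u_n)\}$ into norm precompactness of their potentials.
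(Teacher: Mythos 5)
Your proposal is correct, and its skeleton coincides with the paper's: both define $\Phi(u)=Rf(\cdot,u)+R\mu$ on a weighted space $L^1(E;\varrho\cdot m)$ with a strictly positive weight $\varrho$ chosen so that $R|\mu|+Rg$ is integrable, note that (H4) forces $|\Phi(u)|\le R|\mu|+Rg$, check continuity by the a.e.-subsequence/dominated-convergence routine, and conclude with Schauder. The one genuinely different ingredient is the compactness step. The paper gets it by citing a potential-theoretic result (Dellacherie--Meyer, Lemma 94): the images $\Phi(u_n)$ are differences of potentials dominated by the fixed potential $Rg+R|\mu|$, and such a sequence admits an $m$-a.e.\ convergent subsequence, after which domination upgrades this to $L^1(\varrho\cdot m)$ convergence. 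You instead extract a weak-$*$ convergent subsequence of the bounded densities $f(\cdot,u_n)/g$ in $L^\infty(E;g\cdot m)$ (separability of $L^1(E;g\cdot m)$ is what makes this sequential) and test against $r(x,\cdot)\in L^1(E;g\cdot m)$, which is legitimate precisely for those $x$ with $Rg(x)<\infty$, i.e.\ q.e.; this yields the same pointwise convergence of the potentials and the same dominated-convergence upgrade. Your route is more elementary and self-contained -- it uses only the existence of the kernel $r$ (hypothesis (L)) and standard functional analysis, rather than a structure theorem for excessive functions -- at the modest cost of having to observe that $L^1(E;g\cdot m)$ is separable. One small imprecision: to produce the weight you appeal to Lemma \ref{lm.m}, which yields an excessive $\rho$ with $\|\mu\|_\rho+\|g\|_{L^1(E;\rho\cdot m)}<\infty$; what you actually need is a strictly positive $\varrho$ with $\int_E(R|\mu|+Rg)\varrho\,dm<\infty$, which is not the same condition (by symmetry of $r$ it amounts to $R\varrho\in L^1(|\mu|+g\cdot m)$). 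This is harmless, since such a $\varrho$ exists for any $m$-a.e.\ finite function, which is exactly how the paper chooses it, but the reference to Lemma \ref{lm.m} is not quite the right justification.
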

\begin{dow}
Let $\varrho$ be a strictly positive Borel function on $E$ such that
\[
r:=\int_E(Rg(x)+R|\mu|(x))\varrho(x)\,m(dx)<\infty.
\]
Let us define $\Phi:L^1(E;\varrho\cdot m)\rightarrow
L^1(E;\varrho\cdot m)$ by
\[
\Phi(u)=Rf(\cdot,u)+R\mu.
\]
Observe that for every $u\in L^1(E,\varrho\cdot m)$,
$\|\Phi(u)\|_{L^1(E;\varrho\cdot m)}\le r$. It is an elementary
check that $\Phi$ is continuous. Let $\{u_n\}\subset
L^1(E;\varrho\cdot m)$ and let $v_n=\Phi(u_n)$. By  \cite[Lemma
94, page 306]{DellacherieMeyer},  $\{v_n\}$ has a subsequence
convergent $m$-a.e., which when combined with the fact that
$|v_n|(x)\le Rg(x)+R|\mu|(x)$ for $x\in E$ implies that, up to a
subsequence, $\{v_n\}$ converges in $L^1(E;\varrho\cdot m)$.
Therefore by  the Schauder fixed point theorem there exists $u\in
L^1(E;\varrho\cdot m)$ such that $\Phi(u)=u$, which proves the
theorem.
\end{dow}

\section{Good measures and reduced measures}
\label{sec5}

In this section we develop the theory of reduced measures   for
(\ref{eq1.1}) in case of general Dirichlet operator $A$ and
general measure $\mu$ of the class $\BM$. Our results generalize
the corresponding results from H. Brezis, M. Marcus and A.C. Ponce
\cite{BMP} proved in the case where $A$ is the Laplace operator on
a bounded domain in $\BR^d$ and $\mu$ is a bounded measure.  Also
note that in \cite{BMP} it is assumed that $f$ does not depend on
$x$.

In the whole section in addition to (H1)--(H3) we assume that
$f(x,y)=0$ for $y\le 0$.

\begin{df}
We say that a measurable numerical function $v$ on $E$ is a
subsolution of (\ref{eq4.1}) if $f_v\cdot m\in\BM$ and there
exists a measure $\nu\in\BM$ such that $\nu\le \mu$ and
\[
-Av=f(x,v)+\nu.
\]
\end{df}

\begin{tw}
\label{tw5.1} Assume \mbox{\rm{(H1)--(H3)}}. Let $f_n=f\vee (-n)$
and let $u_n$  be a solution of
\[
-Au_n=f_n(x,u_n)+\mu.
\]
Then $u_n\searrow u^*$, where $u^*$ is a maximal subsolution of
\mbox{\rm{(\ref{eq4.1})}}. Moreover, the measure
$\mu^*=-Au^*-f(x,u^*)$ admits decomposition of the form
$\mu^*=\mu_d+\nu$ with $\nu\bot\mbox{\rm Cap}$ such that $\nu\le
\mu_c$.
\end{tw}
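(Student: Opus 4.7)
The strategy is to build $u^*$ as the pointwise monotone limit of solutions of truncated problems and then identify the defect in the source measure of the limiting equation.

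\textbf{Step 1 (construction, monotonicity, and a priori bounds).} For each $n\ge 1$, the truncation $f_n:=f\vee(-n)$ takes values in $[-n,0]$ and inherits \mbox{(H1)--(H3)}. A solution $u_n$ of the truncated equation exists by Theorem \ref{tw4.2} applied to the bounded nonlinearity $f_n$ (with condition (H4) verified after a weight adjustment in the Schauder argument of that theorem, or by first approximating $\mu$ by smooth measures and invoking \cite{KR:JFA}); it is unique under (H1) by the Corollary of Proposition \ref{stw4.1}. Since $n\mapsto f_n$ is pointwise nonincreasing, Proposition \ref{stw4.1} gives $u_n\ge u_{n+1}$ q.e., so $u_n\searrow u^*$ pointwise q.e. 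Proposition \ref{stw4.3}, applied with $f_n(\cdot,0)\equiv 0$, yields $R|f_n(\cdot,u_n)|\le R|\mu|$ and hence $|u_n|\le 2R|\mu|<\infty$ q.e.; thus $u^*$ is finite q.e., and by continuity of $f$ together with Fatou's lemma, $w_n:=f_n(\cdot,u_n)\to f(\cdot,u^*)$ pointwise q.e.\ and $f(\cdot,u^*)\cdot m\in\BM$.

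\textbf{Step 2 (limit equation and maximality).} Since $w_n\le 0$, the nonnegative functions $p_n:=R\mu-u_n=-Rw_n=R(|w_n|\cdot m)$ are potentials of smooth measures and increase pointwise to $p^*:=R\mu-u^*$, which is an excessive function dominated by the potential $R|\mu|$. The Riesz decomposition for excessive functions (whose harmonic part vanishes under this domination) then gives $p^*=R\nu_0$ for a positive Borel measure $\nu_0\in\BM$; testing against potentials $R\psi$ and passing to the limit in $\int R\psi\cdot|w_n|\,dm=\int\psi p_n\,dm$ yields $|f(\cdot,u^*)|\cdot m\le\nu_0$, so $\nu':=\nu_0-|f(\cdot,u^*)|\cdot m\ge 0$. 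Setting $\mu^*:=\mu-\nu'$, one checks directly that $u^*=R\mu-R\nu_0=Rf(\cdot,u^*)+R\mu^*$, so $u^*$ solves $-Au=f(\cdot,u)+\mu^*$ in the sense of Definition \ref{df3.1} with $\mu^*\le\mu$; hence $u^*$ is a subsolution of \mbox{(\ref{eq4.1})}. For maximality, any subsolution $v$ is itself a solution of $-Av=f(\cdot,v)+\tilde\nu$ with $\tilde\nu\le\mu$; applying Proposition \ref{stw4.1} with data $(f,\tilde\nu)\le(f_n,\mu)$ and (H1) for $f$ gives $v\le u_n$ q.e.\ for every $n$, and hence $v\le u^*$.

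\textbf{Step 3 (decomposition of $\mu^*$; the main obstacle).} It remains to verify that $\mu^*=\mu_d+\nu$ with $\nu\perp\mbox{\rm Cap}$ and $\nu\le\mu_c$. This reduces to the single inequality $\nu'\le\mu_c$: once established, $\nu'\le\mu_c$ combined with $\mu_c\perp\mbox{\rm Cap}$ forces $\nu'\perp\mbox{\rm Cap}$, so $\nu:=\mu_c-\nu'\ge 0$ is also singular with respect to Cap, and $\mu^*=\mu-\nu'=\mu_d+(\mu_c-\nu')=\mu_d+\nu$ gives the claimed decomposition. Establishing $\nu'\le\mu_c$ is the main technical obstacle. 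The natural route is via the auxiliary sequence $v_n:=u_n-R\mu_c$, which solves the smooth-source equation $-Av_n=f_n(\cdot,v_n+R\mu_c)+\mu_d$ and therefore falls within the scope of the smooth-measure theory of \cite{KR:JFA}. Passing to the limit in this smooth-measure problem and comparing with the probabilistic characterization of $u_n$ (Definition \ref{def3.9}, in particular the boundary behaviour $E_x u_n(X_{\tau_k})\to R\mu_c(x)$ along a reducing sequence) identifies $\nu'$ as the concentrated defect that is ``absorbed'' by the nonlinearity as $n\to\infty$; the obstruction to interchanging the limits $n\to\infty$ and $k\to\infty$ is precisely $R\nu'$, and controlling it by $R\mu_c$ amounts to a careful comparison argument on the reducing sequences. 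This absorption bound is the delicate technical step of the proof; the remaining bookkeeping is routine.
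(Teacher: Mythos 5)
Your Steps 1 and 2 are essentially sound and in fact take a slightly different route to the existence and maximality of the subsolution than the paper: instead of extracting $m$-a.e.\ convergent subsequences from the two families of excessive functions $Rf_n^{\pm}(\cdot,u_n)+R\mu^{\pm}$ (via \cite[Lemma 94, page 306]{DellacherieMeyer}) and invoking \cite{GetoorGlover} twice, you use the monotone increasing potentials $p_n=R\mu-u_n$ and a single Riesz decomposition. Both routes deliver $u^*=Rf(\cdot,u^*)+R\mu^*$ with $\mu^*=\mu-\nu'$, $\nu'\ge0$, and maximality by comparison.

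However, the ``Moreover'' clause --- $\mu^*=\mu_d+\nu$ with $\nu\perp\mbox{\rm Cap}$ and $\nu\le\mu_c$, equivalently $(\nu')_d=0$ together with $\nu'\le\mu_c$ --- is not proved: you correctly isolate it as the crux and then only sketch a strategy. This is a genuine gap, and the sketched route (rewriting the equation for $u_n-R\mu_c$ as a smooth-source problem and ``comparing reducing sequences'') does not obviously close it, because the difficulty --- possible escape of mass of $f_n(\cdot,u_n)\cdot m$ onto a set of zero capacity --- is present in the smooth-source reformulation exactly as in the original one. The paper closes this by a probabilistic two-step limit interchange that your purely analytic Step 2 cannot see. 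First, one fixes a reducing sequence $\{\tau_k\}$ independent of $n$ (built from the quasi-continuous function $h=R|\mu|$, which dominates every $|u_n|$) and, for fixed $k$, lets $n\to\infty$ in the representation $u_n(x)=E_xu_n(X_{\tau_k})+E_x\int_0^{\tau_k}f_n(X_r,u_n(X_r))\,dr+E_x\int_0^{\tau_k}dA^{\mu_d}_r$; this shows the smooth part of the source survives the limit exactly as $\mu_d$, so the entire defect is carried by the boundary term. Second, because $u^*=R\nu$ for a signed measure $\nu\in\BM$ (this is precisely where representing the limit as a difference of two potentials is needed), Theorem \ref{th3.3} identifies $\lim_kE_xu^*(X_{\tau_k})=R\nu_c(x)$, whence $\mu^*=\mu_d+\nu_c$ and in particular $(\nu')_d=0$; and $\nu_c\le\mu_c$ follows from $E_xu^*(X_{\tau_k})\le E_xu_n(X_{\tau_k})$ together with Remark \ref{uw3.1} and Lemma \ref{lm3.1}. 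Without some version of this argument the key assertion $(\nu')_d=0$ remains unestablished.
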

\begin{dow}
Let $\{\tau_k\}$ be a  reducing sequence for $u_n$. By the
Tanaka-Meyer formula,
\begin{align*}
|u_n(x)|&\le E_x|u_n|(X_{\tau_k})+E_x\int_0^{\tau_k}\mbox{sgn}(u_n)(X_r)
f_n(X_r,u_n(X_r))\,dr\\
&\quad+E_x\int_0^{\tau_k}\mbox{sgn}(u_n)(X_r)\,dA^{\mu_d}_r
\end{align*}
for q.e. $x\in E$. By (H1),
\[
|u_n(x)|+E_x\int_0^{\tau_k}|f_n(X_r,u_n(X_r))|\,dr \le
E_x|u_n|(X_{\tau_k})+E_x\int_0^{\tau_k}\,dA^{|\mu_d|}_r
\]
for q.e. $x\in E$. Letting $k\rightarrow \infty$ in the above
inequality we get
\begin{equation}
\label{eq5.2} |u_n(x)|+R|f_n(\cdot,u_n)|(x)\le R|\mu|(x)
\end{equation}
for q.e. $x\in E$. Let $v_n,w_n$ be solutions of the following
equations
\[
-Av_n=f^+_n(\cdot,u_n)+\mu^+,\qquad -Aw_n=f^-_n(\cdot,u_n)+\mu^-.
\]
Of course, $v_n,w_n$ are excessive functions and by (\ref{eq5.2}),
\begin{equation}
\label{eq5.3}
v_n=Rf^+_n(\cdot,u_n)+R\mu^+\le 2R|\mu|,
\qquad w_n=Rf^-_n(\cdot,u_n)+R\mu^-\le 2R|\mu|.
\end{equation}
By  \cite[Lemma 94, page 306]{DellacherieMeyer}, from $\{v_n\}$
and $\{w_n\}$ one can choose subsequences convergent $m$-a.e. to
excessive functions $v$ and $w$, respectively. By (\ref{eq5.3})
and \cite{GetoorGlover}, there exists $\nu_1, \nu_2\in\BM^+$ such
that $v=R\nu_1$, $w=R\nu_2$. By Theorem \ref{th3.3} the function
$h=R|\mu|$ is quasi-continuous. Therefore if we put
$\delta^1_k=\inf\{t\ge 0:h(X_t)\ge k\}\wedge\zeta$, then
$\delta^1_k\nearrow\zeta,\, P_x$-a.s. for q.e. $x\in E$. From
Theorem \ref{th3.3} it also follows that $h(X)$ is a special
semimartingale. Therefore there exists a sequence
$\{\delta^2_k\}\subset \mathcal{T}$ such that $\delta^2_k\nearrow
\zeta$ and for q.e. $x\in E$,
\[
E_x\sup_{t\le\delta_k^2}|h(X_t)|<\infty.
\]
We may assume that $\tau_k=\delta^1_k=\delta^2_k$. Since by
Proposition \ref{stw4.1}, $u_n(x)\ge u_{n+1}(x)$, $n\ge1$, for
q.e. $x\in E$, there exists $u^*$ such that $u_n\searrow u^*$,
q.e. Therefore letting $n\rightarrow\infty$ in the equation
\[
u_n(x)= E_xu_n(X_{\tau_k})+E_x\int_0^{\tau_k}
f_n(X_r,u_n(X_r))\,dr+E_x\int_0^{\tau_k}\,dA^{\mu_d}_r
\]
and using (H1)--(H3), (\ref{eq5.2}) (and the fact that
$\tau_k=\delta^1_k=\delta^2_k$) we get
\[
u^*(x)= E_xu^*(X_{\tau_k})+E_x\int_0^{\tau_k}
f(X_r,u^*(X_r))\,dr+E_x\int_0^{\tau_k}\,dA^{\mu_d}_r
\]
for q.e. $x\in E$. Observe that $u^*=v-w=R\nu$, q.e., where
$\nu=\nu_1-\nu_2$. Therefore by Theorem \ref{th3.3},
\begin{equation}
\label{eq5.4} \lim_{k\rightarrow\infty}E_xu^*(X_{\tau_k})=
R\nu_c(x)
\end{equation}
for q.e. $x\in E$. By (\ref{eq5.2}) and Fatou's lemma,
$f(\cdot,u^*)\cdot m\in\BM$. Hence $u^*$ is a solution of
(\ref{eq4.1}) with $\mu$ replaced by $\mu^*:=\mu_d+\nu_c$. What is
left is to show that  $u^*$ is the maximal subsolution of
(\ref{eq4.1}). By the construction of $u^*$, $u_n\ge u^*$.
Therefore by condition (b) of the definition of a probabilistic
solution of (\ref{eq4.1}) and Lemma \ref{lm3.1} (see also Remark
\ref{uw3.1}) we have $\mu^*_c\le \mu_c$, which when combined with
the fact that $\mu^*_d=\mu_d$ shows that $\mu^*\le \mu$, i.e. that
$u^*$ is subsolution of (\ref{eq4.1}). Suppose that $v$ is another
subsolution of (\ref{eq4.1}). Then there exists $\beta\in \BM$
such that $\beta\le\mu$ and $v$ is a solution of (\ref{eq4.1})
with $\mu$ replaced by $\beta$. Since $\beta\le \mu$ and $f_n\ge
f$, applying Proposition \ref{stw4.1} shows that $u_n\ge v$ q.e.,
hence that $u^*\ge v$ q.e., which completes the proof.
\end{dow}
\medskip

Let $\mu\in \BM$. From now on  by $\mu^*$, $u^*$ we denote the
objects constructed in Theorem \ref{tw5.1}. By Theorem
\ref{tw5.1}, $\mu^*\le\mu$. It is known (see \cite{BB}) that it
may happen that $\mu^*\neq \mu$, i.e. that there is no solution of
(\ref{eq4.1}) under assumptions (H1)--(H3).

\begin{df}
(a) We call $\mu^*$ the reduced measure associated to $\mu$.
\smallskip\\
(b) We call $\mu\in\BM$ a good measure (relative to $A$ and $f$)
if there exists a solution of (\ref{eq4.1}).
\end{df}

In what follows we denote by  $\GG$ the set of all good measures
relative to $A$ and $f$.  Of course, $\mu^*\in\GG$.

\begin{stw}
\label{stw5.1}
Let $\mu\in\BM$. Then
\begin{enumerate}
\item[\rm{(i)}] $\mu^*\le \mu$,
\item[\rm{(ii)}] $\mu-\mu^*\bot\mbox{\rm Cap}$, $(\mu^*)_d=\mu_d$,
\item[\rm{(iii)}] $\mathcal{A}\cap S\subset \GG$,
\item[\rm{(iv)}] $\mu^*$ is the largest good measure less then or equal
to $\mu$,
\item[\rm{(v)}] $|\mu^*|\le|\mu|$,
\item[\rm{(vi)}] if $\mu,\nu\in\BM$ and $\mu\le\nu$, then $\mu^*\le\nu^*$.
\end{enumerate}
\end{stw}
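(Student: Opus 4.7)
Parts (i) and (ii) follow at once by unpacking the last assertion of Theorem \ref{tw5.1}, which provides a decomposition $\mu^* = \mu_d + \nu$ with $\nu \bot \mbox{\rm Cap}$ and $\nu \le \mu_c$. Indeed, $\mu^* \le \mu_d + \mu_c = \mu$ gives (i); and since $\nu$ is concentrated with respect to Cap, $\nu$ is the concentrated part of $\mu^*$, so $(\mu^*)_d = \mu_d$ and $\mu - \mu^* = \mu_c - \nu$ is a difference of two concentrated measures, hence $\bot \mbox{\rm Cap}$, proving (ii). For (iii), any smooth $\mu \in \BM$ (i.e.\ $\mu \in \BM \cap S$) satisfies $\mu_c = 0$; for such $\mu$, existence of a probabilistic solution of (\ref{eq4.1}) under \mbox{(H1)--(H3)} is the content of \cite{KR:JFA,KR:CM}, so $\mu \in \GG$.

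For (iv), let $\nu \in \GG$ with $\nu \le \mu$ and let $v$ be the solution of (\ref{eq4.1}) corresponding to $\nu$. Since $\nu \le \mu$, $v$ is by definition a subsolution of (\ref{eq4.1}) in the sense introduced just before Theorem \ref{tw5.1}, and that theorem then yields $v \le u^*$ q.e. Choose a sequence $\{\tau_k\}\subset\mathcal{T}$ that reduces both $v$ and $u^*$; for instance $\tau_k = \inf\{t\ge 0 : |v(X_t)| \vee |u^*(X_t)| \ge k\}\wedge k$ works by the remark following Definition \ref{def3.9}. The pointwise bound $v \le u^*$ gives $E_x v(X_{\tau_k}) \le E_x u^*(X_{\tau_k})$ for q.e.\ $x$, and letting $k \to \infty$ via condition (b) of Definition \ref{def3.9} applied separately to $v$ and $u^*$ yields $R\nu_c \le R\mu^*_c$ q.e. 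An application of Lemma \ref{lm3.1} upgrades this to $\nu_c \le \mu^*_c$; combined with the trivial bound $\nu_d \le \mu_d = (\mu^*)_d$ (from $\nu \le \mu$ and (ii)), this gives $\nu \le \mu^*$. Part (vi) is then immediate: if $\mu \le \nu$, then $\mu^* \in \GG$ satisfies $\mu^* \le \mu \le \nu$, so applying (iv) with $\nu$ in place of $\mu$ yields $\mu^* \le \nu^*$.

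For (v) the upper bound on the concentrated parts follows from (i) and (ii), so the task reduces to producing a matching lower bound. Set $w := -R\mu^-$; then $w \le 0$ implies $f(\cdot, w) \equiv 0$, and $-Aw = -\mu^-$, so $w$ is a solution of (\ref{eq4.1}) with right-hand side $-\mu^-$, showing $-\mu^- \in \GG$. Since $-\mu^- \le \mu^+ - \mu^- = \mu$, part (iv) gives $-\mu^- \le \mu^*$, and therefore $\mu^*_c \ge -(\mu^-)_c = -\mu_c^-$, where $\mu_c = \mu_c^+ - \mu_c^-$ denotes the Hahn decomposition, using the standard fact that the Hahn and the Lebesgue (w.r.t.\ Cap) decompositions commute. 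Writing $\mu^*_c = \alpha - \mu_c^-$ with $0 \le \alpha \le \mu_c^+$, the singularity $\alpha \bot \mu_c^-$ yields $|\mu^*_c| = \alpha + \mu_c^- \le \mu_c^+ + \mu_c^- = |\mu_c|$; together with $(\mu^*)_d = \mu_d$ this gives $|\mu^*| = |\mu_d| + |\mu^*_c| \le |\mu_d| + |\mu_c| = |\mu|$.

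The principal technical step is the passage in (iv) from the pointwise inequality $v \le u^*$ between probabilistic solutions to the inequality $\nu_c \le \mu^*_c$ between the singular parts of their underlying measures; this rests on identifying $R\mu_c(x)$ with $\lim_k E_x u(X_{\tau_k})$ via condition (b) and then inverting a potential inequality to a measure inequality through Lemma \ref{lm3.1}.
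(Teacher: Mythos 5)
Your proposal follows the paper's own proof almost line for line: (i)--(iii) by unpacking Theorem \ref{tw5.1} and citing \cite{KR:JFA}, (iv) via the subsolution comparison $v\le u^*$ followed by Lemma \ref{lm3.1}, (v) from $-\mu^-\in\GG$ together with (iv), and (vi) from (i) and (iv). The one place where your argument does not quite close is the step you yourself single out as the principal one. From $v\le u^*$ and condition (b) of Definition \ref{def3.9} you obtain only $R\nu_c\le R(\mu^*)_c$, i.e.\ the inequality between the $0$-potentials. But Lemma \ref{lm3.1} requires $R_\alpha\nu_c\le R_\alpha(\mu^*)_c$ for \emph{all} $\alpha\ge\alpha_0$, and an inequality between the $0$-potentials of two positive measures does not by itself propagate to the $\alpha$-potentials: the resolvent equation gives $R_\alpha\sigma=R\sigma-\alpha R_\alpha R\sigma$, and subtracting terms of the form $\alpha R_\alpha h$ with $h\le 0$ can reverse the inequality, which is precisely why the lemma is formulated with the whole range of $\alpha$. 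This is also exactly why the paper invokes Remark \ref{uw3.1} at this point: condition (b) upgrades to $\lim_k E_xe^{-\alpha\tau_k}u(X_{\tau_k})=R_\alpha\mu_c(x)$ for every $\alpha\ge0$, and since $e^{-\alpha\tau_k}v(X_{\tau_k})\le e^{-\alpha\tau_k}u^*(X_{\tau_k})$ pointwise along the common reducing sequence, one gets the potential inequality for all $\alpha\ge 0$ and Lemma \ref{lm3.1} then applies legitimately. With that single repair your proof of (iv) --- and hence of (v) and (vi), which rest on it --- coincides with the paper's. A minor stylistic remark on (v): your passage through the Hahn decomposition of the concentrated parts is more elaborate than needed, since from $-\mu^-\le\mu^*\le\mu$ one reads off $(\mu^*)^+\le\mu^+$ and $(\mu^*)^-\le\mu^-$ directly, whence $|\mu^*|\le|\mu|$.
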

\begin{dow}
Assertions (i) and (ii) follow from Theorem \ref{tw5.1}. (iii)
follows from \cite{KR:JFA}. Let $\nu\in\GG$ and $\nu\le \mu$.
Since $\nu\in\GG$, there exist a solution $v$ of (\ref{eq4.1})
with $\mu$ replaced by $\nu$. Since $\nu\le\mu$, the latter means
that $v$ is a subsolution of (\ref{eq4.1}). Therefore  by Theorem
\ref{tw5.1}, $v\le u^*$ q.e. From this,  condition (b) of the
definition of a probabilistic solution and Remark \ref{uw3.1},
\[
R_\alpha\nu_c\le R_\alpha (\mu^*)_c
\]
for every $\alpha\ge 0$. Hence $\nu_c\le (\mu^*)_c$ by Lemma
\ref{lm3.1}.  On the other hand, since $\nu\le\mu$,
$\nu_d\le\mu_d$. By (ii), $(\mu^*)_d=\mu_d$. Consequently,
$\nu=\nu_c+\nu_d\le (\mu^*)_c+\mu_d=(\mu^*)_c+(\mu^*)_d=\mu^*$. To
prove  (v), let us  observe that $-\mu^-\in\GG$, because $-R\mu^-$
is a solution of (\ref{eq4.1}) with $\mu$ replaced by $-\mu^-$.
Hence, by (iv), $-\mu^-\le\mu^*$, from which we easily get (v). To
show  (vi), let us observe that $\mu^*\in\GG$, and by (i),
$\mu^*\le\nu$. Hence $\mu^*\le\nu^*$ by (iv).
\end{dow}

\begin{stw}
\label{stw5.2} A measure $\mu\in\BM$ is  good if and only if the
sequence  $\{f_n(X,u_n(X))\}$ considered in the proof of Theorem
\ref{tw5.1} is uniformly integrable under the measure $dt\otimes
P_x$ for $m$-a.e. $x\in E$.
\end{stw}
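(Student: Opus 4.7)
The plan is to exploit the identities
\[
u_n(x)=R\mu(x)+Rf_n(\cdot,u_n)(x), \qquad u^*(x)=R\mu^*(x)+Rf(\cdot,u^*)(x),
\]
valid for q.e.\ $x\in E$ by Definition \ref{df3.1} applied to the equations $-Au_n=f_n(\cdot,u_n)+\mu$ and (from Theorem \ref{tw5.1}) $-Au^*=f(\cdot,u^*)+\mu^*$. Since $u_n\searrow u^*$ q.e.\ and $R\mu(x)<\infty$ q.e., subtracting gives the pointwise limit
\[
Rf_n(\cdot,u_n)(x)\to Rf(\cdot,u^*)(x)-R(\mu-\mu^*)(x) \quad\text{q.e.}
\]
Writing $g_n:=-f_n(X,u_n(X))\ge 0$ and $g:=-f(X,u^*(X))\ge 0$ (both $\le R|\mu|(x)$ in $L^1(dt\otimes P_x)$-norm by \mbox{\rm{(\ref{eq5.2})}}), this reads
\[
\|g_n\|_{L^1(dt\otimes P_x)}\to\|g\|_{L^1(dt\otimes P_x)}+R(\mu-\mu^*)(x),
\]
where $R(\mu-\mu^*)\ge 0$ since $\mu\ge\mu^*$, and $R(\mu-\mu^*)\equiv 0$ is equivalent to $\mu=\mu^*$ by Lemma \ref{lm3.1}, i.e.\ to $\mu$ being good by Proposition \ref{stw5.1}(iv). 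The entire proposition thus reduces to identifying when this ``loss of mass'' vanishes.

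Next I would verify $g_n\to g$ for $(dt\otimes P_x)$-a.e.\ $(r,\omega)$ and q.e.\ $x$: the convergence $u_n\searrow u^*$ fails only on a polar set, which is not visited by $X$ under $P_x$ for q.e.\ $x$; once $u_n(X_r(\omega))\to u^*(X_r(\omega))\in\BR$ a.s., the truncation at $-n$ is eventually inactive and continuity of $f(x,\cdot)$ delivers the a.e.\ convergence of $g_n$ to $g$.

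For the forward implication, assume $\mu$ is good, so $\mu=\mu^*$ by Proposition \ref{stw5.1}(iv). The displayed convergence of $L^1$-norms reduces to $\|g_n\|_1\to\|g\|_1$ for $m$-a.e.\ $x$; combined with the a.e.\ convergence of the nonnegative sequence $g_n$, Scheff\'e's lemma yields $g_n\to g$ in $L^1(dt\otimes P_x)$, and every $L^1$-convergent sequence is uniformly integrable.

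For the backward implication, assume uniform integrability. Transience of $\{T_t\}$ supplies the tightness needed to complement UI: by the Markov property and $R|f_n(\cdot,u_n)|\le R|\mu|$,
\[
\int_{[T,\infty)\times\Omega}g_n\,dt\,dP_x
=E_x\bigl[R|f_n(\cdot,u_n)|(X_T)\mathbf{1}_{\{T<\zeta\}}\bigr]
\le (T_T R|\mu|)(x)\to 0 \quad\text{as } T\to\infty,
\]
uniformly in $n$, since $R|\mu|$ is a potential and $T_t R|\mu|\to 0$ by transience. Vitali's theorem on $[0,T]\times\Omega$ (finite measure), combined with this tail bound, yields $g_n\to g$ in $L^1(dt\otimes P_x)$ for $m$-a.e.\ $x$. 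Hence $\|g_n\|_1\to\|g\|_1$, and the first display forces $R(\mu-\mu^*)\equiv 0$ $m$-a.e.; Lemma \ref{lm3.1} then gives $\mu=\mu^*$, i.e.\ $\mu$ is good. The main obstacle is precisely this tightness step, since $dt\otimes P_x$ is not a finite measure; the resolution hinges on the decay $T_t R|\mu|\to 0$ from transience, which controls the $L^1$-tails of $\{g_n\}$ uniformly.
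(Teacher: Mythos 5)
Your proof is correct and follows essentially the same route as the paper's: both directions rest on the integrated identity $u_n-R\mu=Rf_n(\cdot,u_n)$ (the paper's (\ref{eq5.5})), on the a.e.\ convergence $f_n(X,u_n(X))\to f(X,u^*(X))$ established in the proof of Theorem \ref{tw5.1}, and on a Vitali/Scheff\'e argument together with $\mu\in\GG\Leftrightarrow\mu=\mu^*$. Your explicit tightness bound $E_x\int_T^\zeta|f_n(X_r,u_n(X_r))|\,dr\le p_TR|\mu|(x)\to0$ --- needed because $dt\otimes P_x$ is an infinite measure, so uniform integrability plus a.e.\ convergence alone does not give $L^1$ convergence --- is a detail the paper passes over when letting $n\to\infty$ in (\ref{eq5.5}), so it is a welcome refinement rather than a different approach.
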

\begin{dow}
From the proof of Theorem \ref{tw5.1} we know that
$f_n(X,u_n(X))\rightarrow f(X,u^*(X))$, $dt\otimes P_x$-a.e. for
$m$-a.e. $x\in E$ and
\begin{equation}
\label{eq5.5} u_n(x)=R\mu_c+E_x\int_0^{\zeta}
f_n(X_r,u_n(X_r))\,dr+E_x\int_0^{\zeta}\,dA^{\mu_d}_r
\end{equation}
for q.e. $x\in E$. If  $\{f_n(X,u_n(X))\}$ is uniformly integrable
then letting $n\rightarrow\infty$ in (\ref{eq5.5}) shows that for
q.e. $x\in E$,
\[
u^*(x)=R\mu_c+E_x\int_0^{\zeta}
f(X_r,u^*(X_r))\,dr+E_x\int_0^{\zeta}\,dA^{\mu_d}_r,
\]
i.e.  $\mu$ is a good measure. If $\mu\in\GG$ then there exists a
solution $u$ of (\ref{eq4.1}), i.e.
\[
u(x)=R\mu_c+E_x\int_0^{\zeta}
f(X_r,u(X_r))\,dr+E_x\int_0^{\zeta}\,dA^{\mu_d}_r
\]
for q.e. $x\in E$. Of course, $u$ is a subsolution of
(\ref{eq4.1}), so by Theorem \ref{tw5.1}, $u=u^*$ and $u_n\searrow
u$. By this  and (\ref{eq5.5}),
\[
E_x\int_0^\zeta f_n(X_r,u_n(X_r))\,dr\rightarrow E_x\int_0^\zeta
f(X_r,u(X_r))\,dr
\]
for q.e. $x\in E$. Since $f_n(X,u_n(X))\rightarrow f(X,u(X))$,
$dt\otimes dP_x$-a.e. for q.e. $x\in E$ and $f_n(X,u_{n}(X))\le
0$, applying Vitali's  theorem shows that the sequence
$\{f_n(X,u_n(X))\}$ is uniformly integrable under the measure
$dt\otimes P_x$ for q.e. $x\in E$, and hence for $m$-a.e. $x\in
E$.
\end{dow}

\begin{stw}
\label{stw5.3}
If $\nu\in\BM,\, \mu\in\GG$ and $\nu\le \mu$, then $\nu\in\GG$.
\end{stw}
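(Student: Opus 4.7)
The plan is to apply the characterization of good measures provided by Proposition \ref{stw5.2}: $\nu \in \GG$ if and only if the approximating sequence from the construction in Theorem \ref{tw5.1} is uniformly integrable with respect to $dt\otimes P_x$ for $m$-a.e.\ $x\in E$. Since $\mu\in\GG$, the corresponding sequence for $\mu$ is already uniformly integrable, so the strategy is to dominate the $\nu$-sequence pointwise by the $\mu$-sequence and inherit uniform integrability from it.

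Concretely, I would run the approximation of Theorem \ref{tw5.1} in parallel for both measures: with $f_n=f\vee(-n)$, let $u_n$ solve $-Au_n=f_n(x,u_n)+\mu$ and $v_n$ solve $-Av_n=f_n(x,v_n)+\nu$. A pointwise maximum of nonincreasing functions is nonincreasing, so $f_n$ still satisfies (H1). The assumption $\nu\le\mu$ together with Proposition \ref{stw4.1} (applied with $\mu_1=\nu$, $\mu_2=\mu$, $f^1=f^2=f_n$) yields
\[
v_n(x)\le u_n(x)\quad\text{for q.e. }x\in E,\ n\ge 1.
\]
Next, the standing convention $f(x,y)=0$ for $y\le 0$ combined with (H1) forces $f\le 0$ pointwise, hence $f_n\le 0$ as well. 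Using monotonicity of $f_n$ in its second argument and the inequality above,
\[
0\ge f_n(x,v_n(x))\ge f_n(x,u_n(x))\quad\text{q.e.},
\]
so $|f_n(x,v_n(x))|\le|f_n(x,u_n(x))|$ q.e.

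This pointwise domination lifts pathwise to the processes $f_n(X_\cdot,v_n(X_\cdot))$ and $f_n(X_\cdot,u_n(X_\cdot))$, so the uniform integrability of the latter under $dt\otimes P_x$ (granted because $\mu\in\GG$, by Proposition \ref{stw5.2}) immediately transfers to the former via the usual domination criterion for uniform integrability. One last application of Proposition \ref{stw5.2}, this time in the opposite direction, concludes that $\nu\in\GG$.

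The only delicate point is the verification that Proposition \ref{stw4.1} applies to the truncated nonlinearity $f_n$ and that $f_n$ is nonpositive; both reduce to checking that (H1) is preserved under truncation against a constant, which is immediate. Beyond that, the argument is a straightforward monotonicity-plus-domination chase, so no substantive obstacle is anticipated.
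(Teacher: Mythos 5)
Your proposal is correct and follows essentially the same route as the paper: run the truncated approximations for both measures in parallel, use Proposition \ref{stw4.1} to get $v_n\le u_n$ q.e., deduce the pointwise domination $0\ge f_n(\cdot,v_n)\ge f_n(\cdot,u_n)$, and transfer uniform integrability via Proposition \ref{stw5.2} in both directions. Your explicit chain of inequalities is in fact slightly cleaner than the one written in the paper's proof, but the argument is the same.
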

\begin{dow}
Let $\{u_n\}$ be the  sequence of functions of Theorem \ref{tw5.1}
associated with $\mu$ and let $\{v_n\}$ be a sequence constructed
as $\{u_n\}$ but for  $\mu$ replaced by $\nu$. By Proposition
\ref{stw4.1}, $v_n\le u_n$ q.e. Consequently,  $f_n(\cdot,u_n)\le
f(\cdot,v_n)\le 0$ q.e. Since $\mu\in\GG$, we know from
Proposition \ref{stw5.2} that the sequence $\{f_n(X,u_n(X))\}$ is
uniformly integrable under the measure $dt\otimes P_x$ for
$m$-a.e. $x\in E$. Therefore $\{f_n(X,v_n(X))\}$ has the same
property. By Proposition \ref{stw5.2}, this implies that
$\nu\in\GG$.
\end{dow}

\begin{wn}
\label{wn5.1}
If $\mu\in\BM$ and $\mu^+\in \GG$, then $\mu\in\GG$.
\end{wn}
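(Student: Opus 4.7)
The plan is to deduce this corollary immediately from Proposition \ref{stw5.3}. The key observation is that any Borel measure satisfies $\mu \le \mu^+$, because $\mu = \mu^+ - \mu^-$ with $\mu^- \ge 0$. Since $\mu \in \BM$ is given and $\mu^+ \in \GG$ is assumed, we can apply Proposition \ref{stw5.3} with the roles played by the measure $\nu := \mu$ (less than or equal to a good measure) and the good measure $\mu^+$. All three hypotheses of that proposition are satisfied: membership in $\BM$ for $\mu$ is given, goodness of $\mu^+$ is our assumption, and the inequality $\mu \le \mu^+$ is automatic from the Jordan decomposition.

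Thus the conclusion $\mu \in \GG$ follows at once. No obstacle is expected; the entire content of the corollary is packaging the Jordan decomposition inequality $\mu \le \mu^+$ into the framework of Proposition \ref{stw5.3}. The proof is essentially a one-line invocation.
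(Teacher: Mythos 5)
Your proposal is correct and is exactly the paper's own argument: the paper deduces the corollary from Proposition \ref{stw5.3} together with the inequality $\mu\le\mu^+$. Nothing further is needed.
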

\begin{dow}
Follows immediately from Proposition  \ref{stw5.3} and the fact
that $\mu\le\mu^+$.
\end{dow}

\begin{wn}
\label{wn5.2}
If $\mu_1,\mu_2\in\GG$, then $\mu_1\vee\mu_2\in \GG$.
\end{wn}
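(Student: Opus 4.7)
The plan is to obtain the corollary as a direct formal consequence of the maximality property of the reduced measure established in Proposition \ref{stw5.1}(iv), rather than by going back to the approximations $u_n$ or the UI criterion of Proposition \ref{stw5.2}. Set $\mu := \mu_1\vee\mu_2$. Note first that $\mu\in\BM$, since $R|\mu|\le R|\mu_1|+R|\mu_2|<\infty$ q.e. The strategy is to show that $\mu$ coincides with its own reduced measure $\mu^*$, which will immediately give $\mu\in\GG$.

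First I would record the ``easy'' inequality $\mu^*\le \mu$ coming from Proposition \ref{stw5.1}(i). For the reverse inequality, I would apply Proposition \ref{stw5.1}(iv): $\mu^*$ is the largest good measure less than or equal to $\mu$. Since $\mu_1,\mu_2\in\GG$ and both satisfy $\mu_i\le \mu_1\vee\mu_2=\mu$, maximality forces $\mu_i\le \mu^*$ for $i=1,2$. Taking the lattice supremum on the left gives $\mu=\mu_1\vee\mu_2\le \mu^*$.

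Combining the two inequalities yields $\mu^*=\mu$. Since $\mu^*\in\GG$ (as constructed in Theorem \ref{tw5.1}, or equivalently noted after the definition of the reduced measure), we conclude $\mu_1\vee\mu_2\in\GG$, which is the desired statement.

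There is no real obstacle: the corollary is purely order-theoretic, using only that $\mu_1\vee\mu_2=\sup\{\mu_1,\mu_2\}$ together with the maximality property of $\mu^*$ already proved. In fact, the same argument shows more generally that $\GG$ is closed under arbitrary finite suprema.
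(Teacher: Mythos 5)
Your proof is correct and is essentially identical to the paper's own argument: both combine Proposition \ref{stw5.1}(i) ($\mu^*\le\mu$) with the maximality property in Proposition \ref{stw5.1}(iv) applied to $\mu_1$ and $\mu_2$ to get $\mu_1\vee\mu_2\le\mu^*$, hence $\mu=\mu^*\in\GG$. The only (harmless) addition is your explicit check that $\mu_1\vee\mu_2\in\BM$.
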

\begin{dow}
Let $\mu=\mu_1\vee\mu_2$. Since $\mu_1\le\mu$, $\mu_2\le \mu$ and
$\mu_1,\mu_2\in \GG$, it follows from Proposition \ref{stw5.1}(iv)
that $\mu_1\le\mu^*$ and $\mu_2\le\mu^*$. Hence $\mu\le\mu^*$. On
the other hand, by Proposition \ref{stw5.1}(i), $\mu^*\le \mu$, so
$\mu=\mu^*$, i.e. $\mu\in\GG$.
\end{dow}

\begin{wn}
\label{wn5.3}
The set $\GG$ is convex.
\end{wn}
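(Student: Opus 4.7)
The plan is to reduce convexity to the two previously established stability properties of $\GG$: closedness under the binary max (Corollary \ref{wn5.2}) and downward closedness within $\BM$ (Proposition \ref{stw5.3}). This combination is clearly strong enough: any convex combination is dominated by the pointwise maximum.

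More precisely, I would take $\mu_1,\mu_2\in\GG$ and $\lambda\in[0,1]$, and set $\nu=\lambda\mu_1+(1-\lambda)\mu_2$. The first step is to verify $\nu\in\BM$, which is immediate from $|\nu|\le\lambda|\mu_1|+(1-\lambda)|\mu_2|$ and the linearity/monotonicity of $R$, giving $R|\nu|\le\lambda R|\mu_1|+(1-\lambda)R|\mu_2|<\infty$ q.e. The second step is to observe that $\nu\le\mu_1\vee\mu_2$, since $\mu_1\le\mu_1\vee\mu_2$ and $\mu_2\le\mu_1\vee\mu_2$, and these inequalities are preserved under nonnegative convex combination.

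At this point the conclusion is automatic: by Corollary \ref{wn5.2}, $\mu_1\vee\mu_2\in\GG$, and then by Proposition \ref{stw5.3}, any measure in $\BM$ dominated by a good measure is itself good, so $\nu\in\GG$.

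There is essentially no obstacle here; the only thing to be slightly careful about is what $\mu_1\vee\mu_2$ means for signed measures (the smallest signed measure dominating both, which exists and lies in $\BM$ whenever $\mu_1,\mu_2\in\BM$, since it is dominated in absolute value by $|\mu_1|+|\mu_2|$). Once this is implicit in the notation already used in Corollary \ref{wn5.2}, no further argument is required.
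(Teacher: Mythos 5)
Your proposal is correct and follows exactly the paper's own argument: form $\mu_1\vee\mu_2$, invoke Corollary \ref{wn5.2} to see it is good, note that $t\mu_1+(1-t)\mu_2\le\mu_1\vee\mu_2$, and conclude by Proposition \ref{stw5.3}. The additional check that the convex combination lies in $\BM$ is a sensible (if routine) supplement to what the paper leaves implicit.
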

\begin{dow}
Let $\mu_1,\mu_2\in\GG$. Then $\mu_1\vee\mu_2\in \GG$ by Corollary
\ref{wn5.2}. But for every $t\in [0,1]$, $t\mu_1+(1-t)\mu_2\le
\mu_1\vee\mu_2$, so by Proposition \ref{stw5.3},
$t\mu_1+(1-t)\mu_2\in\GG$, $t\in [0,1]$.
\end{dow}
\medskip

Set  $\GG_\rho=\GG\cap\MM_\rho$.

\begin{tw}
\label{tw5.2} We have
\begin{enumerate}
\item[\rm{(i)}]
$\|\mu-\mu^*\|_\rho=\min_{\nu\in\GG_\rho}\|\mu-\nu\|_\rho$ for
every $\mu\in\MM_\rho$,
\item[\rm{(ii)}] if $\mu_1,\mu_2\in\BM$ and $\mu_1\bot\mu_2$, then
$(\mu_1+\mu_2)^*=\mu_1^*+\mu_2^*, $
\item[\rm{(iii)}]
$(\mu\wedge\nu)^*=\mu^*\wedge\nu^*$ and
$(\mu\vee\nu)^*=\mu^*\vee\nu^*$ for every $\mu,\nu\in\BM$,
\item[\rm{(iv)}]
$(\mu^*-\nu^*)^+\le(\mu-\nu)^+ $ for every $\mu,\nu\in\BM$.
\end{enumerate}
\end{tw}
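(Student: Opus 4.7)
The plan is to prove the four parts in the order (iii), (iv), (i), (ii), since (iv) and (i) are short consequences of (iii), while (ii) requires separate and more delicate work. For (iii), the inequality $\mu^*\vee\nu^*\le(\mu\vee\nu)^*$ is immediate from Proposition \ref{stw5.1}(vi). For the reverse, I would set $\lambda=(\mu\vee\nu)^*$ and exploit the fact that the signed Borel measures form a vector lattice, so the distributive identity $\lambda=\lambda\wedge(\mu\vee\nu)=(\lambda\wedge\mu)\vee(\lambda\wedge\nu)$ holds. Each summand $\lambda\wedge\mu$ lies in $\GG$ by Proposition \ref{stw5.3} (being dominated by $\lambda\in\GG$) and is $\le\mu$, so $\le\mu^*$ by Proposition \ref{stw5.1}(iv); likewise for $\lambda\wedge\nu$. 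The $\wedge$ identity is dual: monotonicity gives $(\mu\wedge\nu)^*\le\mu^*\wedge\nu^*$, and conversely $\mu^*\wedge\nu^*\le\mu^*\in\GG$ forces $\mu^*\wedge\nu^*\in\GG$, while $\mu^*\wedge\nu^*\le\mu\wedge\nu$ then gives the needed bound.

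For (iv), I would use (iii) to rewrite both sides: $(\mu^*-\nu^*)^+=\mu^*-\mu^*\wedge\nu^*=\mu^*-(\mu\wedge\nu)^*$ and $(\mu-\nu)^+=\mu-\mu\wedge\nu$. The inequality then reduces to $(\mu\wedge\nu)-(\mu\wedge\nu)^*\le\mu-\mu^*$. Now $\mu^*\wedge\nu\le\mu^*\in\GG$ is good by Proposition \ref{stw5.3} and dominated by $\mu\wedge\nu$, so $\mu^*\wedge\nu\le(\mu\wedge\nu)^*$; combining this with the lattice identity $\mu\wedge\nu+\mu\vee\nu=\mu+\nu$ and the trivial $\mu^*\vee\nu\le\mu\vee\nu$ closes the argument. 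Part (i) then falls out quickly: for any $\nu\in\GG_\rho$, the measure $\mu\wedge\nu$ is $\le\nu\in\GG$, hence in $\GG$ by Proposition \ref{stw5.3}, and $\le\mu$, so $\le\mu^*$ by Proposition \ref{stw5.1}(iv). Therefore $0\le\mu-\mu^*\le\mu-\mu\wedge\nu=(\mu-\nu)^+$, and pairing with the positive function $\rho$ yields $\|\mu-\mu^*\|_\rho\le\|(\mu-\nu)^+\|_\rho\le\|\mu-\nu\|_\rho$. The minimum is attained at $\nu=\mu^*$ itself, which belongs to $\GG_\rho$ because $|\mu^*|\le|\mu|$ by Proposition \ref{stw5.1}(v).

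For (ii), the main obstacle, I would begin by choosing disjoint Borel sets $B_1,B_2$ supporting $|\mu_1|,|\mu_2|$. For one direction, set $\eta=(\mu_1+\mu_2)^*$; Proposition \ref{stw5.1}(v) gives $|\eta|\le|\mu_1+\mu_2|=|\mu_1|+|\mu_2|$, forcing $\eta$ to concentrate on $B_1\cup B_2$, so $\eta=\eta_1+\eta_2$ with $\eta_i=\eta|_{B_i}$ and $\eta_1\perp\eta_2$. Restricting $\eta\le\mu_1+\mu_2$ to $B_i$ yields $\eta_i\le\mu_i$, hence $\eta_i\in\GG$ by Proposition \ref{stw5.3} and $\eta_i\le\mu_i^*$ by Proposition \ref{stw5.1}(iv); adding gives $\eta\le\mu_1^*+\mu_2^*$. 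For the reverse direction, the plan is to establish the closure lemma: if $\alpha,\beta\in\GG$ with $\alpha\perp\beta$, then $\alpha+\beta\in\GG$. Granted this, $\mu_1^*\perp\mu_2^*$ (from $|\mu_i^*|\le|\mu_i|$) yields $\mu_1^*+\mu_2^*\in\GG$, and $\mu_1^*+\mu_2^*\le\mu_1+\mu_2$ then gives $\mu_1^*+\mu_2^*\le(\mu_1+\mu_2)^*$ by Proposition \ref{stw5.1}(iv). The hard part will be proving the closure lemma; the plan is to use the characterization in Proposition \ref{stw5.2}, compare the approximating sequence $u_n$ for $\alpha+\beta$ from Theorem \ref{tw5.1} with the sequences $v_n^{(\alpha)},v_n^{(\beta)}$ separately, and transfer the uniform integrability of $\{f_n(X,v_n^{(\alpha)})\}$ and $\{f_n(X,v_n^{(\beta)})\}$ to $\{f_n(X,u_n)\}$ by splitting the analysis along the disjoint supports $B_1,B_2$.
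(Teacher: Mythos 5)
The paper does not actually prove Theorem \ref{tw5.2}: it defers entirely to the proofs of Corollary 6 and Theorems 8--10 of \cite{BMP}. Your self-contained lattice-theoretic derivation is therefore a reconstruction of what is being outsourced, and parts (i), (iii) and (iv) check out: the inclusion $\mu^*\vee\nu^*\le(\mu\vee\nu)^*$ from monotonicity, the distributive identity $\lambda=(\lambda\wedge\mu)\vee(\lambda\wedge\nu)$ applied to $\lambda=(\mu\vee\nu)^*$ together with Propositions \ref{stw5.3} and \ref{stw5.1}(iv), the reduction of (iv) to $\mu\wedge\nu-\mu^*\wedge\nu=\mu-\mu^*-(\mu\vee\nu-\mu^*\vee\nu)\le\mu-\mu^*$, and the estimate $0\le\mu-\mu^*\le(\mu-\nu)^+$ for $\nu\in\GG_\rho$ (with attainment at $\nu=\mu^*\in\GG_\rho$ via Proposition \ref{stw5.1}(v)) are all correct applications of the stated results.

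Part (ii) contains two genuine, though reparable, defects. First, in proving $(\mu_1+\mu_2)^*\le\mu_1^*+\mu_2^*$ you infer $\eta_i\in\GG$ from $\eta_i\le\mu_i$ "by Proposition \ref{stw5.3}''; but $\mu_i$ is not known to be good, so that proposition does not apply to the pair $(\eta_i,\mu_i)$. The correct domination is $\eta_i=\eta|_{B_i}\le\eta^+$, where $\eta^+\in\GG$ by Theorem \ref{tw5.3}, and only then does Proposition \ref{stw5.3} yield $\eta_i\in\GG$ (the subsequent steps $\eta_i\le\mu_i$, hence $\eta_i\le\mu_i^*$, are fine). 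Second, the closure lemma ($\alpha,\beta\in\GG$ with $\alpha\bot\beta$ implies $\alpha+\beta\in\GG$) is only announced, with a plan to rerun the uniform-integrability criterion of Proposition \ref{stw5.2} along the disjoint supports; as written this is a programme, not a proof, and it is the load-bearing step of the whole part. It is also much harder than necessary: since $\alpha\bot\beta$ forces $\alpha^+\bot\beta^+$, and for mutually singular positive measures the sum equals the supremum, one has $\alpha+\beta\le\alpha^++\beta^+=\alpha^+\vee\beta^+$; now $\alpha^+,\beta^+\in\GG$ by Theorem \ref{tw5.3}, so $\alpha^+\vee\beta^+\in\GG$ by Corollary \ref{wn5.2}, and Proposition \ref{stw5.3} gives $\alpha+\beta\in\GG$. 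With these two repairs your argument for (ii), and hence the whole proof, is complete.
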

\begin{proof}
It suffices to repeat step  by step the reasoning from the proofs
of Corollary 6 and Theorems 8--10 in \cite{BMP}.
\end{proof}

\begin{tw}
\label{tw5.3} Let $\mu\in \BM$. The following conditions are
equivalent:
\begin{enumerate}
\item[\rm{(i)}] $\mu\in\GG$,
\item[\rm{(ii)}] $\mu^+\in\GG$,
\item[\rm{(iii)}] $\mu_c\in\GG$,
\item[\rm{(iv)}] $\mu=g-Av$ for some functions $g,v$ on $E$ such that
$g\cdot m\in\BM$ and $f(\cdot,v)\cdot m\in\BM$.
\end{enumerate}
\end{tw}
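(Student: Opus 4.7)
The plan is to establish the four equivalences via a short cycle, leaning on results already proved in this section. Three of them will fall out quickly: (i)$\Leftrightarrow$(ii) by combining Corollary \ref{wn5.1} with Corollary \ref{wn5.2}: since $0\in\GG$ thanks to the standing convention $f(\cdot,0)=0$, any $\mu\in\GG$ satisfies $\mu^+=\mu\vee 0\in\GG$ by Corollary \ref{wn5.2}, and the converse is exactly Corollary \ref{wn5.1}; (i)$\Leftrightarrow$(iii) by noting that $\mu_c\perp\mu_d$ (the concentrated part lives on a set of zero capacity and the diffuse part does not charge such a set) and applying Theorem \ref{tw5.2}(ii) to get $\mu^*=\mu_c^*+\mu_d^*$, then invoking Proposition \ref{stw5.1}(iii) to conclude $\mu_d^*=\mu_d$ since $\mu_d$ is smooth, so that $\mu^*=\mu$ is equivalent to $\mu_c^*=\mu_c$; and (i)$\Rightarrow$(iv) by merely rearranging the equation, taking $v:=u$ and $g:=-f(\cdot,u)$, both of which have $m$-densities in $\BM$ by the definition of a solution.

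The crux is (iv)$\Rightarrow$(i). Suppose $\mu=g-Av$ with $g\cdot m,\,f(\cdot,v)\cdot m\in\BM$. Then $\mu_v:=-Av=g\cdot m-\mu\in\BM$, and by Definition \ref{df3.1} the function $v=R\mu_v$ q.e. is a solution of the linear equation $-Av=\mu_v$. The idea is to recognize $v$ as the solution of an auxiliary semilinear problem: setting $\beta:=\mu_v-f(\cdot,v)\cdot m\in\BM$, the identity $v=R\mu_v=R(f(\cdot,v)\cdot m+\beta)$ holds q.e., so $v$ is a probabilistic solution of \mbox{\rm{(\ref{eq4.1})}} with $\mu$ replaced by $\beta$. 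Hence $\beta\in\GG$. Applying the already-established implication (i)$\Rightarrow$(iii) to $\beta$ yields $\beta_c\in\GG$, while the absolute continuity of $g\cdot m$ and $f(\cdot,v)\cdot m$ with respect to $m$ (and hence with respect to Cap) forces $\beta_c=(\mu_v)_c=\mu_c$. Thus $\mu_c\in\GG$, and (iii)$\Rightarrow$(i) closes the loop.

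The main obstacle I expect is precisely the semilinear reinterpretation of $v$ in the last step: one has to think of peeling off the $f(\cdot,v)\cdot m$ piece of $\mu_v$ and treating it as the nonlinear term, so that the abstract datum of (iv) becomes a genuine semilinear solution; after that observation, the remaining work is only bookkeeping with the Cap-decomposition. Everything else in the theorem reduces to the additivity of $(\cdot)^*$ on mutually singular measures (Theorem \ref{tw5.2}(ii)) together with the known goodness of smooth measures (Proposition \ref{stw5.1}(iii)).
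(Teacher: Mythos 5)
Your proof is correct, and the crucial step (iv)$\Rightarrow$(i) --- rewriting $-Av=f(\cdot,v)+\beta$ with $\beta:=\mu-g\cdot m-f(\cdot,v)\cdot m$, observing that $v$ solves the semilinear equation with datum $\beta$ so $\beta\in\GG$, then using $\beta_c=\mu_c$ and closing the loop through (iii) --- is exactly the paper's argument; your derivation of (i)$\Leftrightarrow$(iii) via Theorem \ref{tw5.2}(ii) and $\mu_d^*=\mu_d$ is a harmless variant of the paper's route through Proposition \ref{stw5.3} and Corollary \ref{wn5.2}. One small sign slip: from $\mu=g-Av$ one gets $\mu_v:=-Av=\mu-g\cdot m$, not $g\cdot m-\mu$; your subsequent bookkeeping (in particular $\beta_c=(\mu_v)_c=\mu_c$) already presupposes the correct sign, so nothing downstream is affected.
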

\begin{dow}
That (i) is equivalent to (ii) follows from Corollary \ref{wn5.1}
and Corollary \ref{wn5.2}. That (ii) implies (iii) follows from
the fact that $\mu_c\le \mu^+$ and Proposition \ref{stw5.3}.
Suppose that $\mu_c\in\GG$. Since $\mu_d\in\GG$ and
$\mu^+=\mu_d\vee\mu_c$, it follows from Corollary \ref{wn5.2} that
$\mu^+\in\GG$. Thus (iii) implies (ii). Of course (i)  implies
(iv). Suppose now that (i) is satisfied. Then
\[
-Av=f(\cdot,v)+(\mu-g-f(\cdot,v)).
\]
Hence $\mu-g-f(\cdot,v)\in\GG$, and consequently
$(\mu-g-f(\cdot,v))_c=\mu_c\in\GG$, because we already know that
(i) implies (iii). Hence $\mu\in\GG$, because we also know that
(iii) implies (i).
\end{dow}
\medskip

Set \[ L(E;m)=\{f\in\BB(E):f\cdot m\in\mathbb{M}\},
\]
\[
\mathcal{A}(f)=\{\mu\in \BM: f(\cdot,R\mu)\in L(E;m)\},\quad
\mathcal{A}_\rho=\{\mu\in \MM_\rho; f(\cdot,R\mu)\in
L^1(E;\rho\cdot m) \}.
\]

\begin{wn}
\label{wn5.4}
We have
\begin{enumerate}
\item[\rm{(i)}] $\GG+\BM\cap S\subset\GG$,
\item[\rm{(ii)}] $\mathcal{A}(f)+L(E;m)=\GG$,
\item[\rm{(iii)}] $\mathcal{A}_\rho(f)+L^1(E;\rho\cdot m)=\GG_\rho$.
\end{enumerate}
\end{wn}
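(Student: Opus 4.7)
For (i), let $\mu\in\GG$ and $\nu\in\BM\cap S$. Since $\nu$ is smooth, it does not charge exceptional sets, so $\nu_c=0$ and consequently $(\mu+\nu)_c=\mu_c$. By Theorem \ref{tw5.3}, the equivalence (i)$\Leftrightarrow$(iii) applied to $\mu$ gives $\mu_c\in\GG$, so $(\mu+\nu)_c=\mu_c\in\GG$. Applying the same equivalence in the other direction to $\mu+\nu$ yields $\mu+\nu\in\GG$. (Note $\mu+\nu\in\BM$ since $\BM$ is a linear space.)

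For (ii), first observe that $\mathcal{A}(f)\subset\GG$: given $\mu\in\mathcal{A}(f)$, set $g\equiv 0$ and $v=R\mu$, which solves $-Av=\mu$ in the sense of Definition \ref{df3.1}. By the definition of $\mathcal{A}(f)$, $f(\cdot,v)\cdot m=f(\cdot,R\mu)\cdot m\in\BM$, so the representation $\mu=g-Av$ of Theorem \ref{tw5.3}(iv) is available and $\mu\in\GG$. Next, any $g\in L(E;m)$ gives a measure $g\cdot m$ which lies in $\BM$ by definition of $L(E;m)$ and is smooth (absolutely continuity with respect to $m$ rules out charging exceptional sets because Cap-polar sets have $m$-measure zero, and the required nest is obtained from the Radon property of $m$ together with a nest on which $|g|\cdot m$ is finite). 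Thus $g\cdot m\in\BM\cap S$ and (i) yields $\mathcal{A}(f)+L(E;m)\subset\GG$.

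For the reverse inclusion, let $\mu\in\GG$ and let $u$ be the solution of \mbox{\rm(\ref{eq4.1})}. Then by definition $u$ solves $-Au=f(\cdot,u)\cdot m+\mu$ in the sense of Definition \ref{df3.1}, i.e. $u=R(f(\cdot,u)\cdot m+\mu)$ q.e., and $f(\cdot,u)\cdot m\in\BM$. Writing
\[
\mu=(-Au)+(-f(\cdot,u)\cdot m),
\]
the second summand belongs to $L(E;m)$ (viewed as a measure) directly. For the first, $-Au=f(\cdot,u)\cdot m+\mu\in\BM$, and $R(-Au)=u$ q.e., so $f(\cdot,R(-Au))=f(\cdot,u)\in L(E;m)$; hence $-Au\in\mathcal{A}(f)$. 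This gives $\GG\subset\mathcal{A}(f)+L(E;m)$ and completes (ii).

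For (iii), the inclusion $\mathcal{A}_\rho(f)+L^1(E;\rho\cdot m)\subset\GG_\rho$ proceeds exactly as in (ii): $\mathcal{A}_\rho(f)\subset\mathcal{A}(f)\subset\GG$, any $g\in L^1(E;\rho\cdot m)$ yields $g\cdot m\in\MM_\rho\cap S$, and the sum lies in $\MM_\rho\cap\GG=\GG_\rho$ by (i). For the reverse inclusion, take $\mu\in\GG_\rho$ with solution $u$ and repeat the decomposition $\mu=(-Au)+(-f(\cdot,u)\cdot m)$. Since in this section $f(\cdot,0)\equiv 0$, Proposition \ref{stw4.5} gives $\|f(\cdot,u)\|_{L^1(E;\rho\cdot m)}\le\|\mu\|_\rho<\infty$, so $-f(\cdot,u)\in L^1(E;\rho\cdot m)$ and $f(\cdot,u)\cdot m\in\MM_\rho$. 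Consequently $-Au=f(\cdot,u)\cdot m+\mu\in\MM_\rho$, and since $R(-Au)=u$ q.e. with $f(\cdot,u)\in L^1(E;\rho\cdot m)$ we conclude $-Au\in\mathcal{A}_\rho(f)$. The only delicate point in the whole argument is the verification, used repeatedly, that $g\cdot m\in\BM\cap S$ for $g\in L(E;m)$, which rests on the standard fact that for a regular transient Dirichlet form, Cap-polar sets are $m$-null and one can extract a nest using Radonness of $m$.
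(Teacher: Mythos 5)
Your proof is correct and is essentially the derivation the paper intends: the corollary is stated without proof immediately after Theorem \ref{tw5.3}, and your argument --- reducing (i) to the equivalence $\mu\in\GG\Leftrightarrow\mu_c\in\GG$, and obtaining (ii) and (iii) from the decomposition $\mu=(-Au)+(-f(\cdot,u)\cdot m)$ together with Theorem \ref{tw5.3}(iv) applied with $v=R\mu$, $g=0$ --- is exactly the route that theorem is set up for. The one point you rightly flag as delicate (that $g\in L^1(E;\rho\cdot m)$ yields $g\cdot m\in\MM_\rho\cap S$, in particular $R|g|<\infty$ q.e.) is likewise taken for granted by the paper in the very statement of part (iii), so there is no gap relative to the paper's own standard of rigor.
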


Let us consider the following hypothesis:
\begin{enumerate}
\item[(A)] for every $\theta\in [0,1)$, $c\ge 0$ there exist
$\alpha(c,\theta),\,\beta(c,\theta)\ge 0$
such that
\[
|f(x,\theta u+c)|\le \alpha(c,\theta)|f(x,u)|+\beta(c,\theta),
\quad x\in E,\, u\in\BR.
\]
\end{enumerate}
\begin{tw}
\label{tw5.4} Let  $\rho\in L^1(E;m)$. If   \mbox{\rm{(A)}} is
satisfied then $\overline{\mathcal{A}_\rho(f)}=\GG_\rho$, where
the closure is taken in the space $(\MM_\rho,\|\cdot\|_{\rho})$.
\end{tw}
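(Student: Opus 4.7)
My plan is to establish the two inclusions separately. For $\overline{\mathcal{A}_\rho(f)}\subset\GG_\rho$: Corollary \ref{wn5.4}(iii) gives $\mathcal{A}_\rho(f)\subset\GG_\rho$, while Theorem \ref{tw5.2}(i) forces $\GG_\rho$ to be closed in $(\MM_\rho,\|\cdot\|_\rho)$ (any $\mu\in\overline{\GG_\rho}$ has distance zero to $\GG_\rho$ in the $\rho$-norm, but by Theorem \ref{tw5.2}(i) this distance equals $\|\mu-\mu^*\|_\rho$, so $\mu=\mu^*\in\GG_\rho$). Consequently $\overline{\mathcal{A}_\rho(f)}\subset\GG_\rho$.

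The non-trivial direction is $\GG_\rho\subset\overline{\mathcal{A}_\rho(f)}$. Fix $\mu\in\GG_\rho$. By Theorem \ref{tw5.3} we have $\mu^+\in\GG$, and the associated solution $u^+$ satisfies $u^+\ge 0$ by Proposition \ref{stw4.1}. Proposition \ref{stw4.5}, together with the standing assumption $f(\cdot,0)\equiv 0$ of this section, yields $|f(\cdot,u^+)|\in L^1(E;\rho\cdot m)$, so $|f(\cdot,u^+)|\cdot m$ is smooth. Arguing as in the proof of Proposition \ref{stw3.1}, I would choose an increasing family $\{F_n\}$ of closed subsets of $E$ with $\bigcup_n F_n=E$ q.e.\ such that
\[
M_n:=\sup_{x\in E}R\bigl(|f(\cdot,u^+)|\mathbf{1}_{F_n}\bigr)(x)<\infty,
\]
and set, for $\theta\in(0,1)$ and $n\ge 1$,
\[
\nu_{\theta,n}:=\theta\mu^+-\mu^-+\theta f(\cdot,u^+)\mathbf{1}_{F_n^c}\cdot m.
\]
Using the identity $R\mu^+=u^+ +R|f(\cdot,u^+)|$ and splitting $R|f(\cdot,u^+)|$ into its contributions on $F_n$ and $F_n^c$, a direct computation gives
\[
R\nu_{\theta,n}=\theta u^+ +\theta R\bigl(|f(\cdot,u^+)|\mathbf{1}_{F_n}\bigr)-R\mu^-\le \theta u^+ +\theta M_n.
\]
Since $|f(\cdot,y)|$ vanishes for $y\le 0$ and is nondecreasing in $y\ge 0$, hypothesis (A) applied with $c=\theta M_n$ gives
\[
|f(\cdot,R\nu_{\theta,n})|\le \alpha(\theta M_n,\theta)\,|f(\cdot,u^+)|+\beta(\theta M_n,\theta),
\]
whose right-hand side is in $L^1(E;\rho\cdot m)$ by Proposition \ref{stw4.5} and because $\rho\in L^1(E;m)$ makes the constant $\beta$-term $\rho\cdot m$-integrable. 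Thus $\nu_{\theta,n}\in\mathcal{A}_\rho(f)$. Finally, from
\[
\|\nu_{\theta,n}-\mu\|_\rho\le(1-\theta)\|\mu^+\|_\rho+\theta\int_{F_n^c}|f(\cdot,u^+)|\,\rho\,dm,
\]
a diagonal choice $(\theta_k,n_k)\to(1,\infty)$ — first taking $n_k$ large by dominated convergence, then $\theta_k$ close to $1$ — delivers $\nu_{\theta_k,n_k}\to\mu$ in $\MM_\rho$, so $\mu\in\overline{\mathcal{A}_\rho(f)}$.

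The main obstacle — and the reason that both (A) and $\rho\in L^1(E;m)$ enter essentially — is the design of $\nu_{\theta,n}$. A naive scaling such as $\theta\mu$ would leave $R(\theta\mu)=\theta u^+ +\theta R|f(\cdot,u^+)|$ carrying the potential $R|f(\cdot,u^+)|$, which, though finite q.e.\ as the Green potential of an $L^1(E;\rho\cdot m)$ density, is in general unbounded, so no constant $c$ in (A) can control $|f(\cdot,R(\theta\mu))|$ by $|f(\cdot,u^+)|$ modulo an $L^1$ correction. The nest $\{F_n\}$ surgically separates $|f(\cdot,u^+)|$ into a bounded-potential piece (absorbed inside the argument of $f$ and handled by (A) with $c=\theta M_n$) and a small-$L^1$-norm piece (kept as a perturbation of $\mu^+$ that vanishes as $n\to\infty$), which is precisely what makes the approximation by $\mathcal{A}_\rho(f)$ possible.
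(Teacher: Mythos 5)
Your argument is correct, and for the substantive inclusion $\GG_\rho\subset\overline{\mathcal{A}_\rho(f)}$ it is essentially the paper's construction: the paper takes the solution $u$ for $\mu$ itself, a nest $\{F_n\}$ with $\|R(\mathbf{1}_{F_n}f(\cdot,u))\|_\infty<\infty$, and sets $\mu_n=-\theta_nAu-\mathbf{1}_{F_n}f(\cdot,u)=\theta_n\mu+(\theta_n-\mathbf{1}_{F_n})f(\cdot,u)\cdot m$, which has potential $\theta_nu+R(\mathbf{1}_{F_n}|f_u|)\le\theta_nu+c(n)$ and is then handled by (A) exactly as in your computation; your $\nu_{\theta,n}$ is the same object up to the harmless detour through $\mu^+$ and the splitting of $R|f(\cdot,u^+)|$ over $F_n$ and $F_n^c$. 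The only genuine divergence is in proving that $\GG_\rho$ is closed: the paper does this directly, showing via Proposition \ref{stw4.2} that solutions $u_n$ for a $\|\cdot\|_\rho$-Cauchy sequence $\{\mu_n\}\subset\GG_\rho$ converge in $L^1(E;\psi\cdot m)$ together with $f_{u_n}$, and passing to the limit in $u_n=Rf_{u_n}+R\mu_n$; you instead deduce closedness from Theorem \ref{tw5.2}(i), using that the distance from $\mu$ to $\GG_\rho$ is \emph{attained} at $\mu^*$. Your route is shorter but leans on Theorem \ref{tw5.2}, whose proof the paper only sketches by reference to \cite{BMP}; the paper's direct argument is more self-contained and is also what is invoked later (e.g.\ in Corollary \ref{wn7.6}). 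Both are legitimate given the paper's logical ordering.
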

\begin{dow}
First we show that $\GG_\rho$ is a closed subset of
$(\MM_\rho,\|\cdot\|_\rho)$. Let $\{\mu_n\}\subset\GG_\rho$ be a
sequence such that  $\mu_n\rightarrow \mu$ in
$(\MM_\rho,\|\cdot\|_\rho)$ for some $\mu\in \MM_\rho$. Let $u_n$
denote a solution of (\ref{eq4.1}) with $\mu$ replaced by $\mu_n$
and let $\psi$ be a strictly positive Borel function on $E$ such
that $R\psi\le\rho,\, m$-a.e. Let us observe that
\begin{equation}
\label{eq5.6} (R|\mu_n-\mu_m|,\psi)\le \|\mu_n-\mu_m\|_\rho,\quad
n,m\ge1.
\end{equation}
By Proposition \ref{stw4.2},
\begin{equation}
\label{eq5.7} (R|f_{u_n}-f_{u_m}|,\psi)\le
\|\mu_n-\mu_m\|_\rho,\quad n,m\ge1.
\end{equation}
Adding (\ref{eq5.6}) to (\ref{eq5.7}) gives
\[
\int_E |u_n(x)-u_m(x)|\psi(x)\,m(dx) \le
2\|\mu_n-\mu_m\|_\rho,\quad n,m\ge1.
\]
Therefore there exists $u\in L^1(E;\psi\cdot m)$ such that
$u_n\rightarrow u$ in $L^1(E;\psi\cdot m)$. By the definition of a
solution of (\ref{eq4.1}),
\[
u_n=Rf_{u_n}+R\mu_n,\quad m\mbox{-a.e.}
\]
By (\ref{eq5.6}) and (\ref{eq5.7}) the right-hand side of the
above equation converges in  $L^1(E;\psi\cdot m)$ to
$Rf_{u}+R\mu$. Hence
\[
u=Rf_{u}+R\mu,\quad m\mbox{-a.e.},
\]
which implies that $\mu\in \GG_\rho$, and hence that $\GG_{\rho}$
is closed. Therefore
$\overline{\mathcal{A}_\rho(f)}\subset\GG_\rho$, because
$\mathcal{A}_\rho(f)\subset\GG_\rho$ by Theorem \ref{tw5.3}. Now
suppose that $\mu\in\GG_\rho$. Then there exists a solution $u$ of
(\ref{eq4.1}). Let $\theta_n=(1-\frac1n)$ and let $\{F_n\}$ be a
nest such that $c(n):=\|R(\mathbf{1}_{F_n}
f(\cdot,u))\|_\infty<\infty$ (such a nest exists, because
$f(\cdot,u)\in\mathcal{M}_\rho\subset\BM$). Let $\mu_n=-\theta_n A
u-\mathbf{1}_{F_n} f(\cdot,u)$. By (A),
\begin{equation}
\label{eq5.8} |f(x,R\mu_n(x))|\le \alpha(c(n),\theta_n)|f(x,u(x))|
+\beta(c(n),\theta_n),\quad x\in E.
\end{equation}
By Proposition \ref{stw4.5}, $f_u\in L^1(E;\rho\cdot m)$.
Therefore from  (\ref{eq5.8}) it follows that  $\mu_n\in
\mathcal{A}_\rho(f)$. Since it is clear that
$\|\mu_n-\mu\|_\rho\rightarrow 0$, we have
$\mu\in\overline{\mathcal{A}_\rho(f)}$, which completes the proof.
\end{dow}

\section{Inverse maximum principle and Kato's inequality}
\label{sec6}

In this section we consider the linear equation (\ref{eq3.1}). The
following theorem generalizes the inverse maximum principle proved
by H. Brezis and A.C. Ponce in  \cite{BrezisPonce} in case $A$ is
the Laplace operator on a bounded domain in $\BR^d$.

\begin{tw}
\label{tw6.1} Let $\mu\in \BM$ and $u$ be a solution of
\mbox{\rm{(\ref{eq3.1})}}. If $u\ge 0$ then $\mu_c\ge 0$.
\end{tw}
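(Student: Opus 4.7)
The plan is to reduce the statement to Lemma \ref{lm3.1} by showing that the \emph{resolvents} $R_\alpha\mu_c$ are all nonnegative, and to obtain this via the probabilistic representation of $u$ together with the stopping‐time argument behind Remark \ref{uw3.1}.

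First, by Definition \ref{df3.1} we have $u=R\mu$ q.e., so $u$ is a probabilistic solution of \mbox{\rm(\ref{eq3.1})} in the sense of Definition \ref{def3.9} (applying Theorem \ref{th3.3} to $\mu^+$ and $\mu^-$ and subtracting yields the MAF decomposition
\[
u(X_t)=u(X_0)-\int_0^t dA^{\mu_d}_r+\int_0^t dM_r,\qquad t\ge0,\quad P_x\mbox{-a.s.}
\]
for q.e. $x\in E$). Since $u\ge 0$, the stopping times $\tau_k=\inf\{t\ge 0:u(X_t)\ge k\}\wedge k$ form a reducing sequence for $u$ (by the remark following Definition \ref{def3.9}), and in particular $E_x\sup_{t\le\tau_k}|u(X_t)|<\infty$ for q.e.~$x$, so the stopped process $M^{\tau_k}$ is a uniformly integrable martingale.

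Next, I would apply the integration by parts formula to $e^{-\alpha t}$ and $u(X_t)$, exactly as in the derivation of (\ref{eq3.4}) in the proof of Theorem \ref{th3.2}, to obtain for any $\alpha>0$, q.e.~$x\in E$ and $k\ge 1$,
\[
E_xe^{-\alpha\tau_k}u(X_{\tau_k})
=u(x)-\alpha E_x\int_0^{\tau_k}e^{-\alpha r}u(X_r)\,dr
-E_x\int_0^{\tau_k}e^{-\alpha r}\,dA^{\mu_d}_r,
\]
the martingale term vanishing thanks to the uniform integrability of $M^{\tau_k}$. Letting $k\to\infty$ and combining with the resolvent identity $u=R_\alpha\mu+\alpha R_\alpha u$ (cf.~(\ref{eq3.3})) gives
\[
\lim_{k\to\infty}E_xe^{-\alpha\tau_k}u(X_{\tau_k})
=u(x)-\alpha R_\alpha u(x)-R_\alpha\mu_d(x)
=R_\alpha\mu(x)-R_\alpha\mu_d(x)=R_\alpha\mu_c(x)
\]
for q.e.~$x\in E$, which is the signed analogue of Remark \ref{uw3.1}.

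Since $u\ge 0$, the left-hand side is nonnegative for every $k$, hence $R_\alpha\mu_c(x)\ge 0$ for q.e.~$x$ and every $\alpha>0$. Because $y\mapsto r_\alpha(x,y)$ is $\alpha$-excessive, this inequality extends to every $x\in E$ by \cite[Proposition II.3.2]{BG}. Finally, applying Lemma \ref{lm3.1} to the signed measure $\mu_c\in\BM$ and the zero measure (which is legitimate since $|\mu_c|\le|\mu|\in\BM$) yields $\mu_c\ge 0$, as required.

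The only delicate point is the extension of Remark \ref{uw3.1} to the signed setting: one must verify that the integration by parts and the dominated/monotone passages to the limit work with the reducing sequence of $u=R\mu^+-R\mu^-$ rather than with separate sequences for $u^+$ and $u^-$. This is handled by noting that the reducing property of $\{\tau_k\}$ controls $u(X)$ uniformly on $[0,\tau_k]$, which is exactly what is needed to eliminate the martingale term and pass to the limit in the additive functional terms.
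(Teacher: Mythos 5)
Your proposal is correct and follows essentially the same route as the paper: both arguments establish $R_\alpha\mu_c\ge 0$ q.e. from the identity $\lim_{k\to\infty}E_xe^{-\alpha\tau_k}u(X_{\tau_k})=R_\alpha\mu_c(x)$ together with $u\ge 0$, extend to all $x$ via \cite[Proposition II.3.2]{BG}, and conclude with Lemma \ref{lm3.1}. The only difference is that you explicitly carry out the integration-by-parts derivation of the signed-measure version of Remark \ref{uw3.1}, a step the paper's proof leaves implicit by citing ``the definition of a solution.''
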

\begin{dow}
Assume that $u\ge 0$. Let $\{\tau_k\}$ be a  reducing sequence for
$u$. By the definition of a solution of (\ref{eq3.1}), for every
$\alpha\ge0$,
\[
\lim_{k\rightarrow\infty}E_xe^{-\alpha\tau_k}u(X_{\tau_k})=
R_\alpha\mu_c(x)
\]
for q.e. $x\in E$. In particular, $R_{\alpha}\mu_c(x)\ge0$ for
q.e. $x\in E$, and hence, by \cite[Proposition II.3.2]{BG},
$R_\alpha\mu_c\ge 0$ everywhere.  That $\mu_c\ge0$ now follows
from Lemma \ref{lm3.1}.
\end{dow}

\begin{stw}
\label{stw6.1} Assume that $\mu\in\BM$. Let $u$ be a solution of
\mbox{\rm{(\ref{eq3.1})}} and let $\varphi$ be a positive convex
Lipschitz  continuous function on $\BR$ such that $\varphi(0)=0$.
Then $A\varphi(u)\in\BM$. Moreover,
\[
\|A\varphi(u)\|_\rho\le \mbox{\rm Lip}(\varphi)\|\mu\|_\rho.
\]
\end{stw}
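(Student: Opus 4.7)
The plan is to apply the Meyer-It\^o formula for a convex function to the semimartingale $u(X)$ and then identify the resulting bounded-variation part with an additive functional of a signed measure $\nu\in\BM$.

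Since $u$ is a probabilistic solution, Definition~\ref{def3.9}(a) gives $u(X_t)=u(X_0)-A^{\mu_d}_t+M_t$ for some local MAF $M$. The Meyer-It\^o formula applied to $\varphi$ and the semimartingale $u(X)$ yields
\[
\varphi(u(X_t))-\varphi(u(X_0))=-A^{\varphi'_-(u)\cdot\mu_d}_t+K_t+N_t,
\]
where $K$ is a nonnegative nondecreasing process (combining local time and jump corrections from convexity) and $N_t:=\int_0^t\varphi'_-(u(X_{r-}))\,dM_r$ is a local MAF, since $|\varphi'_-|\le L:=\mbox{\rm Lip}(\varphi)$. The predictable compensator of $K$ is a PCAF and by Fukushima's Revuz correspondence (\cite[Theorem 5.1.4]{Fukushima}) corresponds to a unique positive smooth measure $\kappa$. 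Setting $\nu_d:=\varphi'_-(u)\cdot\mu_d-\kappa$ (a signed smooth measure), we obtain the representation required by Definition~\ref{def3.9}(a) for $\varphi(u)$. Any reducing sequence $\{\tau_k\}$ for $u$ also reduces $\varphi(u)$, since $|\varphi(u)|\le L|u|$; the limit $\lim_kE_x\varphi(u(X_{\tau_k}))$ determines a positive measure $\nu_c\perp\mbox{\rm Cap}$ (by the argument in the proof of Theorem~\ref{th3.3}), so $\varphi(u)=R\nu$ with $\nu:=\nu_d+\nu_c\in\BM$, proving $A\varphi(u)\in\BM$.

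For the norm bound, since $\nu_d$ is smooth and $\nu_c\perp\mbox{\rm Cap}$ these are mutually singular, hence $|\nu|=|\nu_d|+\nu_c\le L|\mu_d|+\kappa+\nu_c$. By Lemma~\ref{lm4.1} the estimate $\|A\varphi(u)\|_\rho\le L\|\mu\|_\rho$ will follow from the pointwise inequality $R|\nu|\le LR|\mu|$, which reduces to proving $R(\kappa+\nu_c)\le LR|\mu_c|$. To prove this, I would run the same Meyer-It\^o/Revuz construction for the comparison potential $w:=LR|\mu|$, which satisfies $-Aw=L|\mu|$, and exploit the bound $0\le\varphi(u)\le w$ together with Proposition~\ref{stw4.2}-type monotonicity to show that the additive-functional structure produced for $\varphi(u)$ is dominated componentwise by that of $w$.

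The main obstacle is this last estimate: converting the pointwise bound $\varphi(u)\le LR|\mu|$ between excessive functions into the measure-level inequality $R(\kappa+\nu_c)\le LR|\mu_c|$ on the convex-correction and Cap-singular parts. This requires carefully tracking how $K$ and the reducing-sequence limit $\nu_c$ arise from the singular part $\mu_c$ and the jumps of $M$, and comparing these with the corresponding quantities obtained by running the same analysis for $w$; the combinatorics of the Jordan decomposition $\mu=\mu^+-\mu^-$ together with the bound $|\varphi'_-|\le L$ must be handled uniformly in the continuous-local-time term, the jump-compensator term, and the contribution of $\mu_c$ to the reducing-sequence limit.
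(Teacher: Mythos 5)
Your first half follows the paper's route (It\^o--Meyer formula, dual predictable projection of the increasing part, Revuz correspondence), but the step where you conclude $\varphi(u)=R\nu$ with $\nu\in\BM$ is not justified. You appeal to ``the argument in the proof of Theorem~\ref{th3.3}'', but that argument runs in the opposite direction: it starts from a function already known to be the potential of a measure and derives the behaviour of $E_xu(X_{\tau_k})$. What you actually need is a Riesz-decomposition statement guaranteeing that the harmonic part you call $\nu_c$ is the potential of a measure at all. The paper supplies exactly this: it forms $w=\varphi(u)+R\kappa+R\mu_d^-$ (in its notation $\kappa$ is called $\nu$), observes that $w(X)$ is a nonnegative supermartingale so that $w$ is excessive, dominates $w$ by the potential $\mbox{\rm Lip}(\varphi)R|\mu|+R\kappa+R\mu_d^-$ of a measure in $\BM$ using $|\varphi(u)|\le \mbox{\rm Lip}(\varphi)|u|\le\mbox{\rm Lip}(\varphi)R|\mu|$, and then invokes \cite[Proposition 3.9]{GetoorGlover} to write $w=R\beta$ with $\beta\in\BM^+$. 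Without this domination-plus-Riesz step your construction of $\nu_c$ is incomplete.

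The more serious problem is the norm estimate. Your reduction of $\|A\varphi(u)\|_\rho\le\mbox{\rm Lip}(\varphi)\|\mu\|_\rho$ to the pointwise inequality $R(\kappa+\nu_c)\le\mbox{\rm Lip}(\varphi)R|\mu_c|$ cannot succeed: take $\mu$ smooth (so $\mu_c=0$) and $u=R\mu$ nonconstant, with $\varphi$ strictly convex (e.g. $\varphi(t)=\sqrt{1+t^2}-1$); then $\kappa$ is a nontrivial positive measure (for $A=\Delta$ it is essentially $\varphi''(u)|\nabla u|^2\,dx$) while the right-hand side vanishes. So the inequality you yourself flag as ``the main obstacle'' is false, and no componentwise comparison with the additive-functional structure of $\mbox{\rm Lip}(\varphi)R|\mu|$ will produce it. The paper's argument is different and avoids estimating $|{-A\varphi(u)}|$ termwise: it takes $E_x$ of the It\^o--Meyer identity at the reducing times $\tau_k$, where the compensator $A^\kappa$ enters with a negative sign and can simply be discarded, giving $\varphi(u)(x)\le\mbox{\rm Lip}(\varphi)\bigl(E_x|u|(X_{\tau_k})+E_x\int_0^{\tau_k}dA^{|\mu_d|}_r\bigr)$; letting $k\rightarrow\infty$ yields the pointwise bound $\varphi(u)\le\mbox{\rm Lip}(\varphi)R|\mu|$ between potentials, and Lemma~\ref{lm4.1} converts this into the $\|\cdot\|_\rho$ estimate. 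You should redo the second half along these lines.
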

\begin{dow}
Let $\{\tau_k\}$ be a  reducing sequence for $u$. By the
definition  of a probabilistic solution of (\ref{eq3.1}),
\[
u(X_t)=u(X_0)-\int_0^t\,dA^{\mu_d}_r+\int_0^t\,dM_r,\quad t\ge 0
\]
for some local MAF $M$. By the It\^o-Meyer formula,
\begin{align}
\label{eq6.1}
\varphi(u)(X_t)&=\varphi(u)(X_0)-\int_0^t\varphi'(u(X_r))\,dA^{\mu_d}_r
\nonumber\\
&\quad+\int_0^t\,dA_r+\int_0^t\varphi'(u(X_{r-}))\,dM_r,\quad
t\ge0
\end{align}
for some increasing process $A$, where $\varphi'$ is the left
derivative of $\varphi$. Let $A^p$ denote the dual predictable
projection of $A$ (one can find a version of $A^p$ which is
independent of $x$; see \cite{CJPS}). Since $A^p$ is predictable,
it is continuous, because the filtration $(\FF_t)$ is quasi-left
continuous. Therefore there exists a positive smooth measure $\nu$
such that $A^p=A^\nu$. For q.e. $x\in E$ we have
\begin{align*}
E_x\int_0^\zeta\,dA^\nu_r&=\lim_{k\rightarrow \infty}
E_x\int_0^{\tau_k}\,dA^\nu_r\le \lim_{k\rightarrow
\infty}(E_x\varphi(u(X_{\tau_k}))
+E_x\int_0^{\tau_k}\varphi'(u(X_r))\,dA^{\mu_d}_r)\\& \le
\mbox{\rm Lip}(\varphi)\lim_{k\rightarrow
\infty}(E_x|u(X_{\tau_k})|+E_x\int_0^{\tau_k}\,dA^{|\mu_d|}_r) \le
2\mbox{\rm Lip}(\varphi)R|\mu |(x).
\end{align*}
Thus $\nu\in\BM$. Write
\[
v_1(x)=R\nu(x),\quad v_2(x)=R\mu_d^-(x),\quad x\in E
\]
and observe that
\[
\varphi(u)(X_t)+v_1(X_t)+v_2(X_t)
=\varphi(u)(x)+v_1(x)+v_2(x)-\int_0^t\,dA^{\mu_d^+}
+\int_0^t\,d\bar{M}_r,\quad t\ge 0
\]
for some local MAF $\bar{M}$. Set $w=\varphi(u)+v_1+v_2$. From the
above equation and the fact that $w\ge0$ it follows that $w(X)$ is
a supermartingale. Therefore $w$ is an excessive function. On the
other hand,
\[
w\le |\varphi(u)|+v_1+v_2\le \mbox{\rm Lip}
(\varphi)|u|+R\nu+R\mu_d^-\le {\rm Lip}
(\varphi)R|\mu|+R\nu+R\mu_d.
\]
Therefore by \cite[Proposition 3.9]{GetoorGlover} there exists a
positive $\beta\in\BM$ such that $w=R\beta$. This implies that
$A\varphi(u)=\beta-\nu-\mu_d^-\in\BM$. By (\ref{eq6.1}) and the
assumptions on $\varphi$,
\begin{align*}
\varphi(u)(x)&=E_x\varphi(u)(X_{\tau_k})
+E_x\int_0^{\tau_k}\varphi'(u(X_r))\,dA^{\mu_d}_r
-E_x\int_0^{\tau_k}\,dA^\nu_r \\
& \le\mbox{\rm Lip}(\varphi)
(E_x|u|(X_{\tau_k})+E_x\int_0^{\tau_k}\,dA^{|\mu_d|}_r)
\end{align*}
for q.e. $x\in E$. Letting $k\rightarrow\infty$ and applying Lemma
\ref{lm4.1} we get the desired result.
\end{dow}
\medskip

The following version of Kato's inequality was proved by H. Brezis
and A.C. Ponce \cite{BrezisPonce} (see also  H. Brezis, M. Marcus
and A.C. Ponce \cite{BMP}) in case $A$ is the Laplace operator on
a bounded domain in $\BR^d$).

\begin{tw}
Let $u$ be a solution of \mbox{\rm{(\ref{eq3.1})}}. Then
$Au^+\in\BM$ and
\begin{equation}
\label{eq6.2}
\mathbf{1}_{\{u>0\}}(Au)_d\le (Au^+)_d,
\end{equation}
\begin{equation}
\label{eq6.3}
(Au)^+_c= (Au^+)_c.
\end{equation}
\end{tw}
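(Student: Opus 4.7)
The starting point is Proposition~\ref{stw6.1} with $\varphi(x)=x^{+}$, a positive convex Lipschitz function with $\mathrm{Lip}(\varphi)=1$ and $\varphi(0)=0$: it already delivers $Au^{+}\in\BM$. To obtain the finer assertions~(\ref{eq6.2}) and~(\ref{eq6.3}) I would re-run the It\^o--Meyer computation from the proof of that proposition. Starting from the Doob--Meyer decomposition $u(X_t)=u(X_0)-A^{\mu_d}_t+M_t$ furnished by Theorem~\ref{th3.3} and using the left derivative $\varphi'_{-}(x)=\mathbf{1}_{\{x>0\}}$, I obtain
\[
u^{+}(X_t)=u^{+}(X_0)-\int_0^t\mathbf{1}_{\{u(X_r)>0\}}\,dA^{\mu_d}_r+L_t+\int_0^t\mathbf{1}_{\{u(X_{r-})>0\}}\,dM_r,
\]
where $L$ is an increasing process whose dual predictable projection $L^{p}$ is continuous by quasi-left-continuity of $(\FF_t)$, and thus $L^{p}=A^{\nu}$ for some smooth $\nu\ge 0$, exactly as in the proof of Proposition~\ref{stw6.1}. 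Hence $u^{+}$ is a probabilistic solution of $-Au^{+}=\sigma$ with $\sigma_d=\mathbf{1}_{\{u>0\}}\mu_d-\nu$ and $R\sigma_c(x)=\lim_{k\to\infty}E_xu^{+}(X_{\tau_k})$ along any reducing sequence.

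Inequality~(\ref{eq6.2}) is immediate from $\nu\ge 0$:
\[
(Au^{+})_d=-\sigma_d=\nu-\mathbf{1}_{\{u>0\}}\mu_d=\nu+\mathbf{1}_{\{u>0\}}(Au)_d\ge\mathbf{1}_{\{u>0\}}(Au)_d.
\]

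For~(\ref{eq6.3}), positivity of $u^{+}$ and the inverse maximum principle (Theorem~\ref{tw6.1}) give $\sigma_c\ge 0$. Re-running the same It\^o--Meyer computation with $u^{-}=(-u)^{+}$ in place of $u^{+}$ produces a probabilistic solution $-Au^{-}=\widetilde\sigma$ with $\widetilde\sigma_c\ge 0$, and subtracting yields $\sigma_c-\widetilde\sigma_c=\mu_c$. To pin down $\sigma_c$ and $\widetilde\sigma_c$ individually I would use the pointwise envelopes $u^{+}\le R\mu^{+}$ and $u^{-}\le R\mu^{-}$, which follow from $u=R\mu^{+}-R\mu^{-}$ and positivity of $R\mu^{\mp}$. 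Choosing a reducing sequence $\{\tau_k\}$ common to $u^{\pm}$ and $R\mu^{\pm}$ (obtained by taking the minimum of individual reducing sequences) and applying Remark~\ref{uw3.1} on both sides of
\[
E_x\,e^{-\alpha\tau_k}u^{+}(X_{\tau_k})\le E_x\,e^{-\alpha\tau_k}R\mu^{+}(X_{\tau_k}),
\]
I pass to the limit to get $R_\alpha\sigma_c\le R_\alpha\mu_c^{+}$ for every $\alpha\ge 0$; Lemma~\ref{lm3.1} then promotes this to $\sigma_c\le\mu_c^{+}$, and symmetrically $\widetilde\sigma_c\le\mu_c^{-}$. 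Feeding these two bounds into $\sigma_c-\widetilde\sigma_c=\mu_c^{+}-\mu_c^{-}$ and restricting to the two pieces of the Hahn decomposition of $\mu_c$ (on which $\mu_c^{+}\perp\mu_c^{-}$) forces $\sigma_c=\mu_c^{+}$ and $\widetilde\sigma_c=\mu_c^{-}$, which is the content of~(\ref{eq6.3}).

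The main obstacle is this last identification: the inverse maximum principle only yields $\sigma_c\ge 0$, and the subtraction $\sigma_c-\widetilde\sigma_c=\mu_c$ by itself does not determine $\sigma_c$. The two-sided envelope $u^{\pm}\le R\mu^{\pm}$ combined with the Hahn-decomposition squeeze is the decisive technical input, and the computation of $\sigma_d$ has to be reconciled with the $\beta$ produced in Proposition~\ref{stw6.1} so that the Itô--Meyer and Riesz-measure viewpoints agree.
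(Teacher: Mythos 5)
Your proposal is correct, and up to the identification of the concentrated part it follows the paper's own skeleton: both arguments start from Proposition \ref{stw6.1} with $\varphi(x)=x^+$ to get $Au^+\in\BM$, read off from the It\^o--Meyer formula (\ref{eq6.1}) that $-Au^+=\nu+\mathbf{1}_{\{u>0\}}\mu_d-l$ with $\nu\ge0$ concentrated and $l\ge0$ smooth (your $\nu$ is the paper's $l$, your $\sigma_c$ is the paper's $\nu$), and obtain (\ref{eq6.2}) directly from the positivity of the compensator of the increasing remainder. Where you genuinely diverge is in proving $\nu=\mu^+_c$. For the upper bound the paper uses the resolvent identity $u=R_\alpha(\mu+\alpha u)$ together with $u^+\le R_\alpha\big((\mu+\alpha u)^+\big)\le R_\alpha(\mu^++\alpha u^+)$ and Lemma \ref{lm3.1} to get the full measure inequality $\nu+\mathbf{1}_{\{u>0\}}\mu_d-l\le\mu^+$; you instead use the envelope $u^+\le R\mu^+$, a common reducing sequence, Remark \ref{uw3.1} and Lemma \ref{lm3.1}. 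Both routes are sound and both ultimately rest on Lemma \ref{lm3.1}; the paper's version has the small bonus that taking diffuse parts of the same inequality also covers (\ref{eq6.2}). For the lower bound the paper is more economical: it applies the inverse maximum principle (Theorem \ref{tw6.1}) to $u^+-u\ge0$, which gives $(\nu+\mathbf{1}_{\{u>0\}}\mu_d-l-\mu)_c\ge0$, i.e.\ $\nu\ge\mu_c$, and combined with $\nu\ge0$ this is already $\nu\ge\mu^+_c$ --- no symmetrization is needed. Your detour through $u^-$, the identity $\sigma_c-\widetilde\sigma_c=\mu_c$ and the Hahn-decomposition squeeze is correct but replaces one application of Theorem \ref{tw6.1} (to $u^+-u$) by two (to $u^+$ and $u^-$) plus an extra upper bound $\widetilde\sigma_c\le\mu^-_c$ and a localization argument. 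Finally, the reconciliation you worry about at the end is not a real obstacle: Proposition \ref{stw6.1} already supplies the Riesz representation $\varphi(u)=R(\beta-\nu-\mu_d^-)$, which is precisely what licenses treating $u^+$ as a probabilistic solution and invoking condition (b) (via Theorem \ref{th3.3} and Remark \ref{uw3.1}) for its concentrated part.
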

\begin{dow}
By Proposition \ref{stw6.1} and (\ref{eq6.1}), $Au^+\in\BM$ and
there exist positive $\nu,l\in\BM$ such that $\nu\bot$Cap,
$l\ll$Cap and
\[
-Au^+=\nu+\mathbf{1}_{\{u>0\}}\mu_d-l.
\]
By the resolvent identity,  for every $\alpha\ge 0$ we have
\[
u=R_\alpha(\mu+\alpha u),\quad
u^+=R_\alpha(\nu+\mathbf{1}_{\{u>0\}}\mu_d-l+\alpha u^+).
\]
It is clear that
\[
R_\alpha(\nu+\mathbf{1}_{\{u>0\}}\mu_d-l+\alpha u^+)\le
R_\alpha(\mu+\alpha u)^+.
\]
Hence
\[
R_\alpha(\nu+\mathbf{1}_{\{u>0\}}\mu_d-l)\le R_\alpha[(\mu+\alpha
u)^+-\alpha u^+]\le R_\alpha\mu^+.
\]
By Lemma \ref{lm3.1},
\[
\nu+\mathbf{1}_{\{u>0\}}\mu_d-l\le \mu^+.
\]
Taking the diffuse part of the above inequality we get
(\ref{eq6.2}). Taking the concentrated part we get
\begin{equation}
\label{eq6.4} \nu\le\mu^+_c.
\end{equation}
On the other hand, since $u^+-u\ge 0$, it follows from Theorem
\ref{tw6.1} that
\begin{equation}
\label{eq6.5}
(\nu+\mathbf{1}_{\{u>0\}}\mu_d-l-\mu)_c\ge 0,
\end{equation}
which implies that $\nu\ge\mu_c^+$. When combined with
(\ref{eq6.4}) this gives (\ref{eq6.2}).
\end{dow}

\begin{uw}
Applying in the proof of Theorem \ref{tw6.1} the It\^o-Meyer
formula with right derivative of the function $u\mapsto u^+$ we
obtain  (\ref{eq6.5}) with $\mathbf{1}_{\{u>0\}}$ replaced by
$\mathbf{1}_{\{u\ge0\}}$. As a result, we get (\ref{eq6.2}) with
$\mathbf{1}_{\{u>0\}}$ replaced by $\mathbf{1}_{\{u\ge0\}}$.
\end{uw}

\section{Equations with polynomial nonlinearity}

In this section we give a necessary and sufficient  condition on
$\mu$ ensuring  the existence of a solution of (\ref{eq4.1}) with
$f$ satisfying the condition
\begin{equation}
\label{eq7.1} |f(x,u)|\le cu^p,\quad x\in E,u\ge0
\end{equation}
for some constants $c\ge0$, $p>1$. We also calculate the reduced
measure in the case where $f(x,u)=-u^p$. In our study a primary
role will be played by a new capacity $\mbox{Cap}_{A,p}$, which we
define below.

Let $p\ge1$. By the Riesz-Thorin interpolation theorem one can
extend the semigroup $\{T_t,t\ge 0\}$ from $L^2(E;m)\cap L^p(E;m)$
to $L^p(E;m)$. We denote the extended semigroup by $\{T^p_t,t\ge
0\}$, whereas  by $\{R^p_\alpha,\alpha>0\}$ we denote its
resolvent. Let $(A_p, D(A_p))$ be the operator generated by
$\{T^p\}$. It is well known that $D(A_p)=R^p_1(L^p(E;m))$. We set
$D_+(A_p)=R^p_1(L^{p,+}(E;m))$. Each element of $D_+(A_p)$ is
defined pointwise via the resolvent kernel. Let $V_p$ denote the
space $D(A_p)$ equipped with the norm
\[
\|u\|_{V_p}=\|A_pu\|_{L^p(E;m)}+\|u\|_{L^p(E;m)}.
\]
We  define the  capacity of $B\subset E$ as
\[
\mbox{Cap}_{A,p}(B)=\inf\{\|\eta\|_{V_p}^p:\eta\in D_+(A_p),\,
\eta\ge \mathbf{1}_{B}\}.
\]
It is an elementary check that Cap$_{A,p}$ is subadditive and
increasing (see, e.g., \cite[Proposition 2.3.6]{AdamsHedberg}). We
say that $\mu\in V'_p\cap\mathbb{M}^+$ if  for every $\eta\in
V^+_p$,
\[
(\eta,\mu)\le c\|\eta\|_{V_p}.
\]

In the rest of the section we assume that $p>1$. By $p'$ we denote
the H\"older conjugate to $p$.
\begin{stw}
\label{stw7.1} If $\mu\in V'_p \cap\mathbb{M}^+$ then $\mu$ is a
good measure relative to the function $f(u)=-|u|^{p'}$.
\end{stw}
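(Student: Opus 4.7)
The plan is to reduce Proposition~\ref{stw7.1} to the characterization of good measures in Theorem~\ref{tw5.3}(iv): a measure in $\BM$ is good whenever it can be written as $g-Av$ with $g\cdot m\in\BM$ and $f(\cdot,v)\cdot m\in\BM$. The idea is to take both $v$ and $g$ equal to the $1$-potential $R_1\mu$, using the $V'_p$ hypothesis to force $v\in L^{p'}(E;m)$, which in turn makes the two $\BM$-conditions fall out automatically.

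The first step is to translate the abstract $V'_p$ bound into the concrete statement $R_1\mu\in L^{p'}(E;m)$. For any $\psi\in L^{p,+}(E;m)$ the function $\eta:=R^p_1\psi$ lies in $D_+(A_p)\subset V^+_p$, and Markovian contractivity of $R^p_1$ on $L^p(E;m)$ yields
\[
\|\eta\|_{V_p}=\|R^p_1\psi\|_{L^p(E;m)}+\|R^p_1\psi-\psi\|_{L^p(E;m)}\le 3\|\psi\|_{L^p(E;m)}.
\]
Applying the $V'_p$ hypothesis to $\eta$ and using symmetry of the kernel $r_1$ gives $\int\psi\cdot R_1\mu\,dm\le 3c\|\psi\|_{L^p(E;m)}$ for all such $\psi$, and duality then forces $R_1\mu\in L^{p'}(E;m)$.

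With $v=g:=R_1\mu$, the resolvent identity $(1-A)R_1=\mathrm{Id}$ applied distributionally to $\mu$ gives $\mu=v-Av$, the decomposition required by Theorem~\ref{tw5.3}(iv). It remains to verify the two $\BM$-conditions. For $g\cdot m\in\BM$, the kernel identity $R=R_1+RR_1$ gives $Rg=R(R_1\mu)=R\mu-R_1\mu$, which is finite q.e.\ because $\mu\in\BM$. For $f(\cdot,v)\cdot m\in\BM$, the hypothesis $\mu\ge 0$ forces $v\ge 0$, and $v\in L^{p'}(E;m)$ gives $|f(v)|=v^{p'}\in L^1(E;m)$, so $v^{p'}\cdot m\in\MM_b\subset\BM$ by Proposition~\ref{stw3.1}. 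Theorem~\ref{tw5.3} then yields $\mu\in\GG$. The only analytically nontrivial step in the whole plan is the duality calculation of the first paragraph---the identification of $V'_p\cap\BM^+$ with $\{\mu\in\BM^+:R_1\mu\in L^{p'}(E;m)\}$---and even this rests on nothing beyond the $L^p$-contractivity of the Markovian resolvent; everything afterwards is formal manipulation of resolvent identities.
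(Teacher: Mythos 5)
Your proposal is correct and follows essentially the same route as the paper: both take $u=R_1\mu$, use the $V'_p$ bound tested against $\eta=R^p_1\psi$ (whose $V_p$-norm is controlled by $\|\psi\|_{L^p}$ via resolvent contractivity) to get $R_1\mu\in L^{p'}(E;m)$ by duality, check $u\cdot m\in\BM$ from $R(R_1\mu)\le R\mu$, and conclude via the decomposition $\mu=u-Au$ and Theorem \ref{tw5.3}(iv). Your write-up merely makes explicit a few steps the paper leaves implicit (the kernel symmetry behind the duality pairing and the verification that $v^{p'}\cdot m\in\MM_b\subset\BM$).
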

\begin{dow}
Let $u$ be a solution of the equation
\[
(I-A)u=\mu.
\]
Then $u\in L^{p'}(E;m)\cap L(E;m)$. Indeed, the fact that $u\in
L(E;m)$ follows from the inequality $Ru\le R\mu$. Now, for  $f\in
L^{p,+}(E;m)$ set $\eta=R_1^p f$. Then
\begin{align*}
\int_E u f\,dm &=\int_E u (I-A_p)\eta\, dm=\int_E (I-A_p)u\eta\,dm
=\int_E\eta\,d\mu\\
&\le c\|\eta\|_{V_p}=c(\|A_p\eta\|_{L^p(E;m)}
+\|\eta\|_{L^p(E;m)})\le 2c\|f\|_{L^p(E;m)},
\end{align*}
which shows that $u\in L^{p'}(E;m)$. That $\mu$ is a good measure
relative to $f(u)=-|u|^{p'}$ now follows from Theorem \ref{tw5.3}.
\end{dow}

\begin{lm}
\label{lm7.1} Let $u\in D_+(A_p)$. Then for every $\lambda>0$,
\[
\mbox{\rm Cap}_{A,p}(u\ge\lambda)\le \lambda^{-p}\|u\|_{V_p}^p.
\]
\end{lm}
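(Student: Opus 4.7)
The plan is to apply the definition of $\mbox{Cap}_{A,p}$ directly with the obvious test function $\eta = \lambda^{-1} u$.

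First I would observe that $D_+(A_p)$ is closed under multiplication by positive scalars: since $u \in D_+(A_p)$ means $u = R_1^p f$ for some $f \in L^{p,+}(E;m)$, we have $\lambda^{-1} u = R_1^p(\lambda^{-1} f)$ with $\lambda^{-1} f \in L^{p,+}(E;m)$, so $\lambda^{-1} u \in D_+(A_p)$. Next, on the set $\{u \ge \lambda\}$ we clearly have $\lambda^{-1} u \ge 1$, i.e.\ $\lambda^{-1} u \ge \mathbf{1}_{\{u \ge \lambda\}}$ pointwise on $E$ (using that $u \ge 0$).

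Therefore $\eta := \lambda^{-1} u$ is an admissible competitor in the definition of $\mbox{Cap}_{A,p}(\{u \ge \lambda\})$, which yields
\[
\mbox{Cap}_{A,p}(u \ge \lambda) \le \|\lambda^{-1} u\|_{V_p}^p = \lambda^{-p}\bigl(\|A_p u\|_{L^p(E;m)} + \|u\|_{L^p(E;m)}\bigr)^p = \lambda^{-p}\|u\|_{V_p}^p,
\]
using the homogeneity of both summands in the $V_p$-norm under the scaling $u \mapsto \lambda^{-1} u$ (the operator $A_p$ is linear, so $A_p(\lambda^{-1} u) = \lambda^{-1} A_p u$).

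There is really no obstacle here; the lemma is a one-line consequence of the definition, analogous to the classical Markov/Chebyshev-type bound for Sobolev capacities (compare \cite[Proposition 2.3.6]{AdamsHedberg}). The only things to check are the admissibility of $\lambda^{-1} u$ as a test function (membership in $D_+(A_p)$ and the pointwise bound on $\{u \ge \lambda\}$), both of which are immediate.
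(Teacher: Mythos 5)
Your proof is correct and follows exactly the paper's argument: take $\eta=\lambda^{-1}u$ as a competitor in the definition of $\mbox{\rm Cap}_{A,p}$ and use the $p$-homogeneity of $\|\cdot\|_{V_p}^p$. The extra checks you include (closure of $D_+(A_p)$ under positive scaling, the pointwise bound $\lambda^{-1}u\ge\mathbf{1}_{\{u\ge\lambda\}}$) are the details the paper leaves implicit.
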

\begin{dow}
Let $B=\{u\ge\lambda\}$. Then $\lambda^{-1}u\ge \mathbf{1}_B$, so
the required inequality follows immediately from the definition of
$\mbox{\rm Cap}_{A,p}$.
\end{dow}

\begin{lm}
\label{lm7.2} Let $\mu\in\MM_b^+$. If $\mu\le c\cdot$\mbox{\rm
Cap}$_{A,p}$ for some $c\ge 0$, then $\mu\in V_{p}'$.
\end{lm}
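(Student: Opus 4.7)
The strategy is to combine the layer-cake formula with the weak-type capacity estimate from Lemma \ref{lm7.1}, interpolating it against the trivial bound given by the finite total mass of $\mu$. Take $\eta\in V_p^+=D_+(A_p)$. Since $\eta$ is pointwise defined via the resolvent kernel and in particular Borel measurable, the layer-cake identity applies:
\[
(\eta,\mu)=\int_0^\infty \mu(\{\eta>\lambda\})\,d\lambda.
\]
For every $\lambda>0$ the super-level set $\{\eta>\lambda\}$ is Borel, so the hypothesis $\mu\le c\cdot\mbox{Cap}_{A,p}$, combined with monotonicity of the capacity and Lemma \ref{lm7.1}, gives
\[
\mu(\{\eta>\lambda\})\le c\cdot\mbox{Cap}_{A,p}(\{\eta\ge\lambda\})\le c\lambda^{-p}\|\eta\|_{V_p}^p.
\]
On the other hand, since $\mu\in\MM_b^+$, we trivially have $\mu(\{\eta>\lambda\})\le\|\mu\|_{TV}$ for every $\lambda\ge 0$.

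I then interpolate between these two bounds. Setting $\lambda_0=(c\|\eta\|_{V_p}^p/\|\mu\|_{TV})^{1/p}$, the two estimates coincide at $\lambda_0$, and splitting the integral there yields
\[
(\eta,\mu)\le \lambda_0\|\mu\|_{TV}+c\|\eta\|_{V_p}^p\int_{\lambda_0}^\infty\lambda^{-p}\,d\lambda=\frac{p}{p-1}\,c^{1/p}\|\mu\|_{TV}^{(p-1)/p}\|\eta\|_{V_p}.
\]
This is precisely the linear estimate required in the definition of $V_p'$, with an admissible constant $\frac{p}{p-1}\,c^{1/p}\|\mu\|_{TV}^{(p-1)/p}$.

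The only substantive point is that both bounds are genuinely needed: the capacity inequality alone controls $\mu(\{\eta>\lambda\})$ only by the weak-type quantity $c\lambda^{-p}\|\eta\|_{V_p}^p$, which fails to be integrable near $\lambda=0$, while the total variation bound disposes of the small-$\lambda$ regime. Boundedness of $\mu$ is therefore essential for this elementary argument; the same calculation can equivalently be presented by first rescaling $\eta$ to unit $V_p$-norm and splitting at $\lambda=1$, but this is merely a normalised form of the same interpolation.
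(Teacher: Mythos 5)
Your proof is correct and follows essentially the same route as the paper: the paper also combines the weak-type estimate of Lemma \ref{lm7.1} with the finite total mass of $\mu$, only it writes the layer-cake integral as a dyadic sum $\mu(E)+\sum_k 2^{k+1}\mu(\eta\ge 2^k)$ after normalising $\|\eta\|_{V_p}=1$, exactly the ``normalised form'' you mention at the end. Your optimised split point merely sharpens the constant; there is no substantive difference.
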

\begin{dow}
Let $\eta\in V^+_p$. By our assumptions on $\mu$ and Lemma
\ref{lm7.1}, for any $\eta\in V^+_p$ with $\|\eta\|_{V_p}=1$ we
have
\begin{align*}
\int_E\eta\, d\mu\le \mu(E)+\sum_{k=0}^{\infty} 2^{k+1}\mu(\eta\ge
2^k)&\le \mu(E)+c\sum_{k=0}^{\infty} 2^{k+1}\mbox{\rm
Cap}_{A,p}(\eta\ge 2^k)\\
&\le\mu(E)+c\sum_{k=0}^{\infty}2^{k(1-p)+1}<\infty,
\end{align*}
which proves the lemma.
\end{dow}
\medskip


\begin{lm}
\label{lm7.3} Let $\mu\in\MM_b^+$ and $\mu\ll\mbox{\rm
Cap}_{A,p}$. Then there exists a decreasing sequence $\{G_n\}$ of
Borel subsets of $E$ such that
\[
\lim_{n\rightarrow\infty}\mbox{\rm Cap}_{A,p}(G_n)= 0,\quad
\lim_{n\rightarrow\infty}\mu(G_n)=0, \quad \mathbf{1}_{E\setminus
G_n}\cdot \mu\le 2^n \mbox{\rm Cap}_{A,p}\,,\quad n\ge 1.
\]
\end{lm}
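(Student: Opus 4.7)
The plan is to construct $G_n$ by an exhaustion argument. For each $n \ge 1$, I would seek a Borel set $A_n \subset E$ which is maximal in the class $\FF_n = \{A \in \BB(E) : \mu(B) \le 2^n\mbox{Cap}_{A,p}(B) \text{ for every Borel } B \subset A\}$, and then set $G_n := E \setminus A_n$. To produce $A_n$, let $\alpha_n = \sup\{\mu(A) : A \in \FF_n\}$, pick a sequence $A_n^{(k)} \in \FF_n$ with $\mu(A_n^{(k)}) \to \alpha_n$, and take a monotone-increasing cumulative union $\tilde{A}_n^{(k)} = A_n^{(1)} \cup \ldots \cup A_n^{(k)}$. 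A crucial clean observation is that $\FF_n$ is stable under monotone increasing countable unions: if $\tilde A^{(k)} \nearrow A$ and $B \subset A$ is Borel, then $\sigma$-additivity of $\mu$ and monotonicity of $\mbox{Cap}_{A,p}$ give $\mu(B) = \lim_k \mu(B \cap \tilde A^{(k)}) \le 2^n \mbox{Cap}_{A,p}(B)$. After building $A_n$, I would replace $A_{n+1}$ by $A_n \cup A_{n+1}$ (permissible since $2^n \le 2^{n+1}$) to ensure $A_n \subseteq A_{n+1}$, hence $G_n \supseteq G_{n+1}$.

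Once the $A_n$ are in hand, I would use maximality to control $\mbox{Cap}_{A,p}(G_n)$. By construction, no Borel subset $C \subset G_n$ with $\mu(C) > 2^n \mbox{Cap}_{A,p}(C)$ can exist, for otherwise $A_n \cup C$ would still lie in $\FF_n$ and strictly increase $\mu(A_n)$ beyond $\alpha_n$. Taking $C = G_n$ itself yields $\mbox{Cap}_{A,p}(G_n) \le 2^{-n}\mu(G_n) \le 2^{-n}\|\mu\|_{TV}$, which gives $\mbox{Cap}_{A,p}(G_n) \to 0$. Since $\{G_n\}$ is decreasing with $\mbox{Cap}_{A,p}(G_n) \to 0$, monotonicity of $\mbox{Cap}_{A,p}$ gives $\mbox{Cap}_{A,p}(\bigcap_n G_n) = 0$, and the hypothesis $\mu \ll \mbox{Cap}_{A,p}$ then yields $\mu(\bigcap_n G_n) = 0$, whence $\mu(G_n) \to 0$ by $\sigma$-additivity of $\mu$.

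The main technical obstacle is the stability step for $\FF_n$ under finite (let alone countable) unions, since $\mbox{Cap}_{A,p}$ is only subadditive: a naive union $A \cup A'$ of two elements of $\FF_n$ only produces the estimate $\mu(B) \le 2 \cdot 2^n \mbox{Cap}_{A,p}(B)$ for $B \subset A \cup A'$, losing a factor of $2$. I would bypass this by working throughout with a single monotone increasing sequence—adjoining at each step a Borel set $B_{k+1}$, disjoint from $\tilde A_n^{(k)}$, chosen so that $\tilde A_n^{(k)} \cup B_{k+1}$ still lies in $\FF_n$ and $\mu(B_{k+1})$ approximates the supremum of admissible increments. The process necessarily exhausts $\alpha_n$ because $\mu$ is bounded, and passage to the monotone limit lands in $\FF_n$ by the observation already noted, sidestepping the subadditivity loss.
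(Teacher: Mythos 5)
There is a genuine gap, and it sits at the heart of the argument: the step that is supposed to give $\mbox{\rm Cap}_{A,p}(G_n)\to 0$ does not work. First, the stated justification is logically reversed: a Borel set $C$ with $\mu(C)>2^n\mbox{\rm Cap}_{A,p}(C)$ violates the defining inequality of $\FF_n$ applied to $B=C$ itself, so $A_n\cup C$ can never lie in $\FF_n$; maximality of $A_n$ therefore says nothing about such sets. What maximality could at best exclude are positive-measure sets $C\subset G_n$ with $A_n\cup C\in\FF_n$, and because of the factor-of-two loss under unions (which you yourself identify) this is strictly weaker than excluding positive-measure sets $C\in\FF_n$. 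Second, even if the claim ``$\mu(C)\le 2^n\mbox{\rm Cap}_{A,p}(C)$ for every Borel $C\subset G_n$'' were available, taking $C=G_n$ gives $\mbox{\rm Cap}_{A,p}(G_n)\ge 2^{-n}\mu(G_n)$ --- the reverse of the inequality you write, and useless for the conclusion. The defect is not reparable within your scheme: for an abstract monotone subadditive set function on a two-point space ($\mbox{\rm Cap}(\{a\})=\mbox{\rm Cap}(\{b\})=1$, $\mbox{\rm Cap}(\{a,b\})=1.2$, $\mu(\{a\})=1$, $\mu(\{b\})=0.5$, $n=0$) the maximal element of $\FF_0$ is $A_0=\{a\}$, nothing can be adjoined, and yet $\mbox{\rm Cap}(G_0)=1>\mu(G_0)=0.5$; since your argument uses only monotonicity and subadditivity, it cannot yield the capacity bound. (Your first-paragraph devices fail for the same reason: the cumulative unions $A_n^{(1)}\cup\dots\cup A_n^{(k)}$ need not lie in $\FF_n$, and $A_n\cup A_{n+1}$ need not lie in $\FF_{n+1}$, since the union estimate only produces the constant $2^n+2^{n+1}=3\cdot 2^n$.)

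The correct exhaustion --- and the one carried out in the proof of Lemma 2.2.9 of Fukushima--Oshima--Takeda, to which the paper's proof defers --- runs over the \emph{bad} sets rather than the good ones. Fix $n$ and call a Borel set $B$ bad if $\mu(B)\ge 2^n\mbox{\rm Cap}_{A,p}(B)$. Greedily choose pairwise disjoint bad sets $B_1,B_2,\dots$ with $\mu(B_{k+1})$ at least half the supremum of $\mu$ over bad sets disjoint from $B_1\cup\dots\cup B_k$, and put $G_n=\bigcup_k B_k$. For \emph{disjoint} families subadditivity works in your favour: $\mbox{\rm Cap}_{A,p}(G_n)\le\sum_k\mbox{\rm Cap}_{A,p}(B_k)\le 2^{-n}\sum_k\mu(B_k)=2^{-n}\mu(G_n)\le 2^{-n}\|\mu\|_{TV}$, which is exactly the bound you wanted. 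Since $\sum_k\mu(B_k)\le\mu(E)<\infty$, the successive suprema tend to $0$, so $E\setminus G_n$ carries no bad set of positive measure; as any $C$ with $\mu(C)>2^n\mbox{\rm Cap}_{A,p}(C)$ is bad with $\mu(C)>0$, this yields $\mathbf{1}_{E\setminus G_n}\cdot\mu\le 2^n\mbox{\rm Cap}_{A,p}$. Monotonicity is then arranged by replacing $G_n$ with $\bigcup_{m\ge n}G_m$ (harmless: the capacity bound degrades only to $2^{1-n}\|\mu\|_{TV}$, and the hereditary inequality passes to the smaller complement). Your final step --- $\mu(G_n)\downarrow\mu(\bigcap_n G_n)=0$ via $\mbox{\rm Cap}_{A,p}(\bigcap_n G_n)=0$ and $\mu\ll\mbox{\rm Cap}_{A,p}$ --- is correct as written.
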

\begin{dow}
It is enough to repeat step by step the proof of \cite[Lemma
2.2.9]{Fukushima}, the only difference being in the fact that we
choose the sets $B_n$ appearing in the proof of \cite[Lemma
2.2.9]{Fukushima} as Borel sets.
\end{dow}
\medskip

As a corollary to Lemma \ref{lm7.3} we get the following
proposition.

\begin{stw}
\label{stw7.2} A measure $\mu\in\mathbb{M}^+$ satisfies
$\mu\ll\mbox{\rm Cap}_{A,p}$ if and only if there exists an
increasing sequence $\{E_n\}$ of Borel subsets of $E$ such that
$\mathbf{1}_{E_n}\cdot \mu\in V'_p\cap \mathbb{M}^+$ for $n\in\BN$
and $\mu(E\setminus \bigcup_{n\ge 1}E_n)=0$.
\end{stw}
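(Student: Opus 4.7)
The plan is to prove both implications separately, using Lemmas \ref{lm7.2} and \ref{lm7.3} for the nontrivial direction.

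For the ``if'' direction, I would take $B\in\BB(E)$ with $\mbox{Cap}_{A,p}(B)=0$ and show $\mu(B)=0$. For each $n$, the measure $\nu_n:=\mathbf{1}_{E_n}\cdot\mu$ lies in $V'_p$, so there is $c_n\ge 0$ with $(\eta,\nu_n)\le c_n\|\eta\|_{V_p}$ for $\eta\in V_p^+$. By the definition of $\mbox{Cap}_{A,p}$, I can choose $\eta_j\in D_+(A_p)$ with $\eta_j\ge\mathbf{1}_B$ and $\|\eta_j\|_{V_p}\to 0$. Then $\nu_n(B)\le(\eta_j,\nu_n)\le c_n\|\eta_j\|_{V_p}\to 0$, giving $\mu(E_n\cap B)=0$ for every $n$. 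Since $\mu(E\setminus\bigcup_n E_n)=0$, this yields $\mu(B)=0$.

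For the ``only if'' direction, the first step is to reduce the general case $\mu\in\mathbb{M}^+$ to the bounded case to which Lemma \ref{lm7.3} applies. Since $\mu\in\mathbb{M}^+$, exactly as in the proof of Proposition \ref{stw3.1}, I obtain an increasing sequence $\{H_k\}$ of Borel subsets of $E$ with $\mu(H_k)<\infty$ and $\bigcup_k H_k=E$ $\mu$-a.e.\ (concretely, pick a strictly positive Borel $g$ with $\int Rg\,d\mu<\infty$ and set $H_k=\{Rg\ge 1/k\}$). For each $k$, the bounded measure $\mathbf{1}_{H_k}\cdot\mu$ is still absolutely continuous with respect to $\mbox{Cap}_{A,p}$, so Lemma \ref{lm7.3} yields a decreasing sequence $\{G_n^{(k)}\}$ of Borel sets with $\mu(H_k\cap G_n^{(k)})\to 0$ and
\[
\mathbf{1}_{H_k\setminus G_n^{(k)}}\cdot\mu\le 2^n\,\mbox{Cap}_{A,p}.
\]

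Next I would build the increasing sequence $\{E_n\}$ by a diagonal construction, setting
\[
E_n=\bigcup_{k=1}^{n}\bigl(H_k\setminus G_n^{(k)}\bigr).
\]
Monotonicity of $\{H_k\}$ in $k$ and antitonicity of $\{G_n^{(k)}\}$ in $n$ make $\{E_n\}$ increasing. Each $\mathbf{1}_{E_n}\cdot\mu$ is bounded (it is dominated by $\mu(H_n)<\infty$) and satisfies $\mathbf{1}_{E_n}\cdot\mu\le n\cdot 2^n\,\mbox{Cap}_{A,p}$, so Lemma \ref{lm7.2} places it in $V'_p\cap\mathbb{M}^+$. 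Finally, for each fixed $k$ the bound $\mu(H_k\cap G_n^{(k)})\to 0$ gives
\[
\mu\Bigl(H_k\setminus\bigcup_{n\ge k}E_n\Bigr)\le \mu\Bigl(H_k\cap\bigcap_{n\ge k}G_n^{(k)}\Bigr)=0,
\]
and summing over $k$ yields $\mu(E\setminus\bigcup_n E_n)=0$.

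The main obstacle is purely bookkeeping: the passage from the bounded-measure statement of Lemma \ref{lm7.3} to a general $\mu\in\mathbb{M}^+$ via the diagonal argument above. The analytic content (the approximation of $\mathbf{1}_B$ in the $V_p$-norm and the estimate of $\mu$ by $\mbox{Cap}_{A,p}$ on each layer) is supplied directly by Lemmas \ref{lm7.2} and \ref{lm7.3}.
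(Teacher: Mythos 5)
Your proof is correct and is exactly the argument the paper has in mind: the paper states this proposition without proof as a corollary of Lemma \ref{lm7.3} (with Lemma \ref{lm7.2} implicitly supplying the $V_p'$ membership), and your write-up supplies the missing details, namely the duality argument for the ``if'' direction and the reduction of a general $\mu\in\mathbb{M}^+$ to bounded pieces $\mathbf{1}_{H_k}\cdot\mu$ followed by the diagonal choice $E_n=\bigcup_{k\le n}(H_k\setminus G_n^{(k)})$. No gaps.
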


\begin{tw}
\label{tw7.1} Assume \mbox{\rm(\ref{eq7.1})}. If
$\mu\in\mathbb{M}$ and $\mu^+\ll\mbox{\rm Cap}_{A,p'}$ then
$\mu\in\GG$.
\end{tw}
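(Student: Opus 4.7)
The plan is to reduce to the case $\mu\ge 0$, approximate from below by measures in $V'_{p'}\cap\BM^+$ (to which Proposition \ref{stw7.1} applies), solve the model equation $-Av_n=-v_n^p+\mu_n$ for each approximation, pass to the monotone limit, and finally invoke the characterization of good measures in Theorem \ref{tw5.3}(iv). By Corollary \ref{wn5.1}, it suffices to prove $\mu^+\in\GG$, so I may assume $\mu\in\BM^+$ with $\mu\ll\mbox{\rm Cap}_{A,p'}$. Proposition \ref{stw7.2}, applied with its exponent equal to $p'$, then yields an increasing sequence of Borel sets $\{E_n\}$ with $\mu_n:=\mathbf{1}_{E_n}\cdot\mu\in V'_{p'}\cap\BM^+$ and $\mu_n\nearrow\mu$.

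Next, Proposition \ref{stw7.1}, applied with its $p$ replaced by $p'$ (so that the H\"older conjugate appearing in its conclusion is our $p$), ensures that each $\mu_n$ is a good measure relative to the model nonlinearity $\tilde f(u)=-|u|^p$. Let $v_n$ denote the associated solution of $-Av_n=-v_n^p+\mu_n$. By Proposition \ref{stw4.1} applied with the zero subsolution, $v_n\ge 0$; moreover, the integral representation $v_n=R\mu_n-R(v_n^p)$ immediately gives $R(v_n^p)\le R\mu_n\le R\mu<\infty$ q.e. Proposition \ref{stw4.1} also yields the monotonicity $v_n\le v_{n+1}$, so I may set $v=\sup_n v_n$, which is finite q.e.\ because $v_n\le R\mu$. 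Monotone convergence applied to the identity $v_n+R(v_n^p)=R\mu_n$ then produces the limiting identity $v+R(v^p)=R\mu$, i.e., $v$ is a solution of $-Av=-v^p+\mu$ in the sense of Definition \ref{df3.1}.

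It remains to apply Theorem \ref{tw5.3}(iv) to the decomposition $\mu=v^p-Av$ with the choice $g:=v^p$. The bound $R(v^p)\le R\mu<\infty$ q.e.\ gives $g\cdot m\in\BM$, while the growth hypothesis $|f(x,u)|\le cu^p$ for $u\ge 0$ (together with $v\ge 0$) yields $R|f(\cdot,v)|\le cR(v^p)<\infty$ q.e., so $f(\cdot,v)\cdot m\in\BM$. Condition (iv) of Theorem \ref{tw5.3} is therefore verified, and hence $\mu\in\GG$. The crux of the argument, and the step I would expect to be the main obstacle, is the transfer from goodness for the concrete power nonlinearity $-v^p$ (furnished by Proposition \ref{stw7.1}) to goodness for the abstract $f$ satisfying only the growth bound \eqref{eq7.1}; this might naively seem to require a separate fixed-point or iteration argument, but is dispatched in one stroke by the decomposition criterion Theorem \ref{tw5.3}(iv), which is the decisive technical input.
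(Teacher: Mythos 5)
Your proof is correct, and it shares the paper's skeleton --- reduce to $\mu\ge 0$, approximate by $\mu_n=\mathbf{1}_{E_n}\cdot\mu\in V'_{p'}\cap\BM^+$ via Proposition \ref{stw7.2}, and feed these approximants into Proposition \ref{stw7.1} --- but the limit passage is genuinely different. The paper picks a weight $\rho$ with $\mu\in\MM_\rho$ (Lemma \ref{lm.m}), notes that $\|\mu_n-\mu\|_\rho\to 0$, and concludes from the closedness of $\GG_\rho$ in $(\MM_\rho,\|\cdot\|_\rho)$, which is established in the first half of the proof of Theorem \ref{tw5.4}. You instead run a monotone scheme on the model equation $-Av_n=-v_n^p+\mu_n$, pass to the increasing limit in $v_n+R(v_n^p)=R\mu_n\le R\mu<\infty$ q.e., and then apply Theorem \ref{tw5.3}(iv) to the limit. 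Your route buys two things: it dispenses with the auxiliary weight $\rho$ and with the norm-stability of $\GG_\rho$ altogether, and it makes explicit a step the paper leaves tacit, namely that Proposition \ref{stw7.1} literally yields goodness only for the model nonlinearity $-|u|^{p}$, so one must still transfer this to a general $f$ obeying (\ref{eq7.1}); your decomposition $\mu=v^p-Av$ together with $R|f(\cdot,v)|\le cR(v^p)\le cR\mu$ handles this cleanly. The paper's argument is shorter because the closedness of $\GG_\rho$ is already on record, but yours is more self-contained. Two small points worth a word in a final write-up: the monotone convergence step uses that q.e.\ convergence implies $m$-a.e.\ convergence (exceptional sets are $m$-null) so that $R(v_n^p)\nearrow R(v^p)$, and $R\mu_n\nearrow R\mu$ because $\mu(E\setminus\bigcup_n E_n)=0$.
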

\begin{dow}
By Theorem \ref{tw5.3} we may assume that $\mu\ge 0$.  By Lemma
\ref{lm.m} there exists a strictly positive bounded excessive
function $\rho$ such that $\mu\in\MM_\rho^+$, and by Proposition
\ref{stw7.2} there exists a sequence $\{\mu_n\}\subset V'_{p'}\cap
\mathbb{M}^+$ such that
$\lim_{n\rightarrow\infty}\|\mu_n-\mu\|_\rho=0$. Therefore it is
enough to show that $\mu_n\in\GG$. But this follows from
Proposition \ref{stw7.1}.
\end{dow}

\begin{wn}
\label{wn7.6} Assume that $\mu\in\BM$ and an let $ f(x,u)=-u^p$,
$x\in E$, $u\ge0$. Then $\mu\in\GG$ if and only if
$\mu^+\ll\mbox{\rm Cap}_{A,p'}$.
\end{wn}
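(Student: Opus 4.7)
The "if" direction is immediate from Theorem \ref{tw7.1}: since $f(x,u)=-u^p$ trivially satisfies the growth condition (\ref{eq7.1}), any $\mu\in\BM$ with $\mu^+\ll\mathrm{Cap}_{A,p'}$ lies in $\GG$.

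For the converse, I would first invoke Theorem \ref{tw5.3} (the equivalence $\mu\in\GG\Leftrightarrow\mu^+\in\GG$) to reduce to the case $\mu\geq 0$. Write the Lebesgue decomposition $\mu=\mu_a+\mu_s$ relative to $\mathrm{Cap}_{A,p'}$, so that $\mu_a\ll\mathrm{Cap}_{A,p'}$ and $\mu_s\perp\mathrm{Cap}_{A,p'}$, both parts nonnegative. Theorem \ref{tw7.1} gives $\mu_a\in\GG$, while Proposition \ref{stw5.3} applied to $\mu_s\leq\mu$ delivers $\mu_s\in\GG$. The crux is then to show that a nonnegative $\mathrm{Cap}_{A,p'}$-singular good measure must vanish.

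To that end, let $v\geq 0$ be the unique solution of $-Av=-v^p+\mu_s$, so that $v+Rv^p=R\mu_s$. Representing a test function $\eta\in D_+(A_{p'})$ as $\eta=R_1^{p'}h$ with $h\in L^{p',+}$, and using the symmetry of $R_1$ together with the identity $R_1\mu_s=v-R_1v+R_1v^p$ (a consequence of $(I-A)v=v+\mu_s-v^pm$), one arrives at the Stampacchia-type duality
\[
\int\eta\,d\mu_s=\int v\,(-A_{p'}\eta)\,dm+\int v^p\,\eta\,dm,\qquad \eta\in D_+(A_{p'}).
\]
Then, for any compact $K$ with $\mathrm{Cap}_{A,p'}(K)=0$, one selects $\eta_n\in D_+(A_{p'})$ with $\eta_n\geq\mathbf{1}_K$, $\|\eta_n\|_{V_{p'}}\to 0$, and (by a truncation argument) $0\leq\eta_n\leq 1$. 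H\"older's inequality forces the first term on the right to tend to $0$, while dominated convergence---since a subsequence of $\eta_n$ tends to $0$ pointwise a.e. under the uniform bound $\eta_n\leq 1$---handles the second. Hence $\mu_s(K)\leq\int\eta_n\,d\mu_s\to 0$; inner regularity of $\mu_s$ then yields $\mu_s=0$.

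The principal obstacle is to secure the integrability required by the duality identity: globally $v\in L^p(E;m)$ and $v^p\in L^1(E;m)$ need not hold, only the weighted analogues $v,\,v^p\in L^1(E;\rho\cdot m)$ for some strictly positive bounded excessive $\rho$ furnished by Lemma \ref{lm.m} and Proposition \ref{stw4.5}. Carrying the duality through in the weighted setting, truncating the test functions $\eta_n$ while staying inside $D_+(A_{p'})$, and rigorously justifying the integration by parts against them is the main technical difficulty; a natural way to bypass it is to first approximate $\mu_s$ from below by measures in $V'_{p'}\cap\mathbb{M}^+$ (handled by Proposition \ref{stw7.1}), concentrated on $\mathrm{Cap}_{A,p'}$-null sets, and then pass to the limit using the stability of $\GG$ under monotone convergence.
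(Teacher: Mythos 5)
Your overall strategy for the converse --- pair the equation against test functions $\eta_n\in V^+_{p'}$ with $\eta_n\ge\mathbf{1}_B$, $\|\eta_n\|_{V_{p'}}\to 0$ and a uniform bound, kill the term $(u,-A_{p'}\eta_n)$ by H\"older and the term $(u^p,\eta_n)$ by dominated convergence --- is exactly the paper's argument, and the "if" direction is handled identically via Theorem \ref{tw7.1}. Two remarks. First, the Lebesgue decomposition of $\mu^+$ with respect to $\mbox{\rm Cap}_{A,p'}$ is an unnecessary detour: showing that a nonnegative singular good measure vanishes is the same statement as showing that a nonnegative good measure does not charge $\mbox{\rm Cap}_{A,p'}$-null sets, and the paper simply tests $\mu^+$ itself against an arbitrary null Borel set $B$.

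Second, and more importantly, the integrability issue you flag at the end is a genuine gap in your write-up, and the bypass you propose does not work. Any $\nu\in V'_{p'}\cap\mathbb{M}^+$ satisfies $\nu(B)\le c\,\|\eta\|_{V_{p'}}$ for every $\eta\in V^+_{p'}$ with $\eta\ge\mathbf{1}_B$, hence $\nu(B)\le c\,\mbox{\rm Cap}_{A,p'}(B)^{1/p'}$; so the only measure in $V'_{p'}$ concentrated on a $\mbox{\rm Cap}_{A,p'}$-null set is the zero measure, and "approximating $\mu_s$ from below by such measures" can only produce $0\le\mu_s$, which proves nothing --- indeed, by Proposition \ref{stw7.2} the existence of a nontrivial increasing approximation of this kind is essentially equivalent to the absolute continuity you are trying to establish, so the bypass is circular. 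The paper resolves the integrability directly: by Proposition \ref{stw5.3} (restricting $\mu^+$ to the sets of a nest, each restriction being a good measure below $\mu^+$) one may assume $\mu^+\in\MM_b$, and then Proposition \ref{stw4.5} applied with the excessive function $\rho\equiv 1$ gives $\|u^p\|_{L^1(E;m)}\le\|\mu^+\|_{TV}$, i.e.\ $u\in L^p(E;m)$ globally, which is precisely what is needed to run the H\"older estimate $|(u,-A_{p'}\eta_n)|\le\|u\|_{L^p(E;m)}\|\eta_n\|_{V_{p'}}$. With that reduction inserted in place of your weighted-space discussion, your argument closes and coincides with the paper's.
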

\begin{dow}
Sufficiency follows from Theorem \ref{tw7.1}. Suppose that $\mu\in
\GG$. By Theorem \ref{tw5.3}, $\mu^+\in\GG$. By Proposition
\ref{stw5.3} and closedness of $\GG$ we may assume that $\mu^+$ is
bounded. Assume that Cap$_{A,p'}(B)=0$ for some Borel set
$B\subset E$. Then there exists a sequence $\{\eta_n\}\subset
V^+_{p'}$ such that $\|\eta_n\|_{V_{p'}}\rightarrow 0$,
$\sup_{n\ge1}\eta_n\le c$ for some $c>0$ and $\eta_n\ge
\mathbf{1}_{B}$. Let $u$ be a solution of (\ref{eq4.1}) with $\mu$
replaced by $\mu^+$. Then $u\in L^p(E;m)$ by Proposition
\ref{stw4.5}. Therefore
\begin{align*}
\mu^+(B)&\le (\eta_n,\mu^+)=(u^p,\eta_n)+(u,-A_p\eta_n)\\& \le
(u^p,\eta_n) +\|u\|_{L^p(E;m)} \|A_p\eta_n\|_{L^{p'}(E;m)}\le
(u^p,\eta_n)+\|u\|_{L^p(E;m)} \|\eta_n\|_{V_{p'}}
\end{align*}
for every $n\in\BN$, which forces $\mu^+(B)=0$.
\end{dow}

\begin{wn}
Let the assumptions of Corollary \ref{wn7.6} hold. Let
$\mu^+_{\mbox{\rm\tiny Cap}_{A,p'}}$ denote the absolutely
continuous part, with respect to \mbox{\rm Cap}$_{A,p'}$,  of the
measure $\mu^+$. Then
\[
\mu^*=\mu^+_{\mbox{\rm\tiny Cap}_{A,p'}}-\mu^-.
\]
\end{wn}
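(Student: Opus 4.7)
The plan is to establish the identity $\mu^* = \mu^+_{\mbox{\rm\tiny Cap}_{A,p'}} - \mu^-$ by a two-sided comparison, combining Corollary \ref{wn7.6} with the maximality property of $\mu^*$ from Proposition \ref{stw5.1}(iv).

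For the lower inequality, set $\tilde\mu := \mu^+_{\mbox{\rm\tiny Cap}_{A,p'}} - \mu^-$. One has $\tilde\mu \le \mu$, since $\mu^+_{\mbox{\rm\tiny Cap}_{A,p'}} \le \mu^+$. Because $\mu^+$ and $\mu^-$ are mutually singular, the Jordan decomposition of $\tilde\mu$ is $\tilde\mu^+ = \mu^+_{\mbox{\rm\tiny Cap}_{A,p'}}$ and $\tilde\mu^- = \mu^-$, so $\tilde\mu^+ \ll \mbox{\rm Cap}_{A,p'}$ by construction. Corollary \ref{wn7.6} therefore gives $\tilde\mu \in \GG$, and Proposition \ref{stw5.1}(iv) yields $\tilde\mu \le \mu^*$.

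For the reverse inequality, I first pin down the Jordan decomposition of $\mu^*$. Combining Proposition \ref{stw5.1}(i) with the observation in the proof of Proposition \ref{stw5.1}(v) that $-\mu^- \in \GG$ (so $-\mu^- \le \mu^*$ by maximality), one obtains $-\mu^- \le \mu^* \le \mu$. Fix a Hahn decomposition $E = P \sqcup N$ of $\mu$. On any Borel $B \subset N$ one has $\mu(B) = -\mu^-(B)$, and the two-sided inequality forces $\mu^*(B) = -\mu^-(B)$; on $B \subset P$ it reduces to $0 \le \mu^*(B) \le \mu^+(B)$. Hence $(\mu^*)^- = \mu^-$ and $(\mu^*)^+ \le \mu^+$.

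Since $\mu^* \in \GG$, Corollary \ref{wn7.6} applied to $\mu^*$ gives $(\mu^*)^+ \ll \mbox{\rm Cap}_{A,p'}$. A short Lebesgue-decomposition argument then shows that any positive measure dominated by $\mu^+$ and absolutely continuous with respect to $\mbox{\rm Cap}_{A,p'}$ must be dominated by $\mu^+_{\mbox{\rm\tiny Cap}_{A,p'}}$, so $(\mu^*)^+ \le \mu^+_{\mbox{\rm\tiny Cap}_{A,p'}}$. Combining with $(\mu^*)^- = \mu^-$ gives $\mu^* \le \mu^+_{\mbox{\rm\tiny Cap}_{A,p'}} - \mu^-$, which together with the first step closes the equality. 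The only delicate point is the Hahn-decomposition identification of $(\mu^*)^\pm$; everything else is a direct assembly of Corollary \ref{wn7.6} and the structural properties of reduced measures collected in Proposition \ref{stw5.1}.
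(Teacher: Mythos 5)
Your proof is correct, and it follows essentially the same route as the paper's: the paper simply defers to the proof of Theorem 16 in \cite{BMP}, which is exactly this two-sided comparison — the candidate measure $\mu^+_{\mbox{\rm\tiny Cap}_{A,p'}}-\mu^-$ is good and $\le\mu$, hence $\le\mu^*$ by Proposition \ref{stw5.1}(iv), while conversely $\mu^*\in\GG$ forces $(\mu^*)^+\ll\mbox{\rm Cap}_{A,p'}$ via Corollary \ref{wn7.6}, which together with $-\mu^-\le\mu^*\le\mu$ gives the opposite inequality. Your explicit Hahn-decomposition identification of $(\mu^*)^\pm$ is a reasonable way to make the second half precise.
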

\begin{dow}
It suffices to repeat step by step the proof of \cite[Theorem
16]{BMP}.
\end{dow}
\bigskip
\begin{uw}
\label{uw.bes} Let us note that from \cite[Proposition
2.3.13]{AdamsHedberg} (see also \cite{Gurarie}) it follows that
for all $p>1$, $\alpha\in (0,1]$ and open bounded set
$D\subset\BR^d$,
\[
c_1\mbox{\rm Cap}^D_{\alpha,p}(B)\le\mbox{\rm Cap}_{A,p}(B)\le c_2
\mbox{\rm Cap}^D_{\alpha,p}(B), \quad B\subset D,
\]
where $A=\Delta^\alpha$ on $D$ with zero boundary  condition (see
Remark \ref{uw.lap}) and for a compact $K\subset D$ the capacity
$\mbox{\rm Cap}^D_{\alpha,p}(K)$ is defined by (\ref{eq1.5}).
\end{uw}

\noindent{\bf\large Acknowledgements}
\medskip\\
Research supported by National Science Centre Grant No.
2012-07-D-ST1-02107.

\end{document}